\documentclass[onecolumn]{IEEEtran}
\usepackage{graphicx} 
\usepackage{subcaption}
\usepackage[utf8]{inputenc}
\usepackage{amsmath,amsfonts,amssymb,amsthm,epsfig,epstopdf,titling}
\usepackage{url,array,mathrsfs,mathtools,dsfont,xargs}
\usepackage{stmaryrd}
\usepackage{bbold,xcolor}
\usepackage{graphicx}
\usepackage{scalerel}
\usepackage{hyperref}
\usepackage{geometry}
\usepackage{enumitem}
\usepackage{multicol}
\usepackage[authoryear]{natbib}

\theoremstyle{plain}
\newtheorem{thrm}{Theorem}
\newtheorem{prop}{Proposition}
\newtheorem{lemma}{Lemma}
\newtheorem{coro}{Corollary}

\theoremstyle{definition}

\theoremstyle{remark}

\newcommand\Rs{\mathbb R_+^*}
\newcommand\R{\mathbb R}
\newcommand\Rd{\mathbb R^{d}}
\newcommand\N{\mathbb N}
\newcommand\X{\mathbb X}
\newcommand\Sp{\mathbb S_{d-1}}
\newcommand\Pb{\mathbb P}
\newcommand\E{\mathbb E}

\newcommand\Var{\operatorname{Var}}

\newcommand\Card{\operatorname{Card}}
\newcommand\Conv{\operatorname{Conv}}

\newcommand\argmin{\operatorname{argmin}}
\DeclarePairedDelimiter\floor\lfloor\rfloor
\DeclarePairedDelimiter\ceil\lceil\rceil

\newcommand\s{\sigma}
\newcommand\la{\lambda}
\newcommand\e{\varepsilon}
\newcommand\gres{\gamma_{\textup{res}}}
\newcommand\gcov{\gamma_{\textup{cov}}}
\newcommand\gbias{\gamma_{\textup{bias}}}
\newcommand\Cvar{\mathcal C_{\textup{var}}}
\newcommand\Cres{\mathcal C_{\textup{res}}}
\newcommand\Ccov{\mathcal C_{\textup{cov}}}
\newcommand\Cbias{\mathcal C_{\textup{bias}}}
\newcommand\Cbound{\mathcal C_{\textup{boundary}}}
\newcommand\Rpq{\mathcal R_{P,Q}}

\newcommand\md{\,\mathrm d}
\newcommand\1[1]{\mathbf{1}\left(#1\right)}
\newcommand\norm[1]{\lVert #1 \rVert}
\newcommand\va[1]{\left\lvert #1 \right\rvert}
\newcommand\sva[1]{\lvert #1 \rvert}
\newcommand\vab{\sva{V^d}}
\newcommand\meanm{\frac 1m\sum_{j=1}^m}
\newcommand\meann{\frac 1n\sum_{i=1}^n}
\newcommand\meank{\frac 1k\sum_{i\in I_k(z)}}

\newcommandx\tak{\hat{\tau}_{k+1}(z)}

\newcommandx\ik[1][1=\ell]{\hat i_{#1}(z)}
\newcommandx\iky[1][1=\ell']{\hat i_{#1}(y)}

\newcommandx\ieu[2][1=n, 2=1]{\llbracket #2; #1 \rrbracket}

\newif\ifprintnotes

\begin{document}

\title{Nearest-neighbour matching on unbounded supports and covariate shift transfer}
\author{Simon VIEL}

\maketitle

\begin{abstract} 
\noindent 

Expectations of multivariate functions with missing labels occur in various fields such as transfer learning and average treatment
effects. Although non-parametric estimators based on nearest-neighbour matching are frequently used in this context, the existing literature 
assumes that the covariates live in some well-shaped compact subset of $\R^d$, with densities that are bounded away from zero. In this 
paper, we show that the usual rates of convergence can be achieved with minimal assumptions on the covariate supports. These assumptions 
are replaced with conditions on the source and target distributions, among which a measure of the tranferability between the two probability 
measures. We show that these conditions are general, can be applied to distributions supported on manifolds, and allow the target distribution 
to have a heavier tail than the source distribution. We also show that this control of the transferability is needed for any estimator to 
achieve good rates of convergence. Finally, applying our results to the estimation of treatment effects, we could relax the assumption that
the assignment probabilities had to be bounded away from zero and one.
\end{abstract}

\section{Introduction}\label{intro}

We study non-parametric estimation of an expectation of a function with missing observations. In many practical situations, we can find 
expectations involving a pair of random variables, a covariate vector and a label, for which the labels are partially observed. In the 
context called \emph{covariate shift} \citep{shimodaira2000improving}, such expectations can be encountered in various fields, such as 
econometrics \citep{heckman1979sample}, natural language processing \citep{bickel2006dirichlet}; \citep{jiang2007instance}, and 
brain-computer interfaces \citep{sugiyama2007covariate}; \citep{li2010application}. A common problem in these situations is to estimate 
an expectation of the form $\E[h(X^*,Y^*)]$, observing a sample of the covariate vector $X^*$ without the labels $Y^*$ and a sample of 
the labelled pair $(X,Y)$. In the covariate shift scenario, we assume that the conditional distributions of $Y^*$ given $X^*$ and $Y$ 
given $X$ are the same while the covariate distributions $Q:=P_{X^*}$ and $P:=P_X$ of $X^*$ and $X$ may differ. In supervised learning,
setting $h(x,y):=l(F(x),y)$ for some loss function $l$ and some hypothesis function $F$, our problem becomes risk estimation, which
is a key step for empirical risk minimisation. 

Another important situation is the estimation of treatment effects in econometrics. This context features a triplet of random variables 
$(X,Y,W)$, where $W\in\{0;1\}$ designates whether an individual received an active or a passive treatment. Of the two potential 
outcomes $Y(0)$ and $Y(1)$, only $Y_i=Y_i(W_i)$ is observed for the individual $i$. The objective of estimating the average treatment effect 
$\E[Y(1)-Y(0)]$, using the observations $(X_i,Y_i,W_i)_{i=1,\dots,N}$, then relates to our problem.

In the case where the two covariate distributions $Q$ and $P$ have respective densities $q$ and $p$ with respect to a reference measure, the
quantity of interest can be rewritten as an expectation weighted by a density ratio: $\E[h(X^*,Y^*)]=\E[h(X,Y)q(X)/p(X)]$. In that 
regard, many sources have introduced weights for the source observations that estimate the density ratio $q/p$, also called the 
\emph{importance}, see \citep{sugiyama2007direct}, \citep{kanamori2009least}, \citep{loog2012nearest}, and \citep{zhang2020one}.

Another natural approach to covariate shift adaptation is to consider, as an estimator of the expectation $\E[h(X^*,Y^*)]$, an empirical 
average over the target sample $(X_j^*)_{j=1,\dots,m}$ such as $\meanm\hat h(X_j^*)$, where $\hat h(X_j^*)$ replaces $h(X_j^*,Y_j^*)$ due
to the missing label $Y_j^*$. Therefore, such an estimator only depends on the pointwise estimator $\hat h(z)$. However, because we are 
aiming to estimate an expectation and take the average over the target sample of the pointwise estimator $\hat h(z)$, the latter need 
not be consistent for the final estimator to be consistent. Instead, it turns out to be more beneficial to use a very localised pointwise 
estimator for $\hat h(z)$. Such local estimators can be obtained, for example, using $k$-nearest neighbours ($k$-NN) algorithms with a 
small parameter $k$. It is known that while the variance of a $k$-NN pointwise predictor scales in $1/k$, the variance of the final 
estimator of the expectation scales in $1/n$ for all choices of $k$, where $n$ is the sample size of the source sample. Because the bias 
of $k$-NN estimators increases with $k$, taking $k=1$ is then a common choice. In a series of studies on the estimation of average 
treatment effects (ATE), a problem related to ours, Abadie and Imbens (\citeyear{abadie2006large}, \citeyear{abadie2008failure},
\citeyear{abadie2011bias}, \citeyear{abadie2012martingale}) established several asymptotic results for $k$-NN matching estimators with
fixed $k$. The link between matching estimators and importance estimation was shown in \citet{lin2023estimation}. In the context of 
covariate shift adaptation, $1$-NN algorithms can be found in \citet{loog2012nearest} and \citet{portier2024scalable}. Recently, $k$-NN 
matching estimators were extended by \citet{holzmann2026multivariate} to $k$-NN local polynomial estimators to reduce the bias in higher 
dimensions. Other estimation problems with expectations or integrals have also used $1$-NN algorithms, for instance to estimate entropy 
\citep{sricharan2012estimation} or residual variance \citep{devroye2018nearest}. Algorithms based on $k$-NN matching have fast computation
times and require no hyper-parameter tuning. As we shall see, they also perfectly take into account the decay of the source probability
mass.

However, the study of $k$-NN type estimators in the literature is frequently connected to the \emph{strong density assumption}
\citep{audibert2007fast}, assuming that the covariates have a density that is bounded away from zero and infinity on some compact 
well-shaped subset of $\R^d$. This assumption can be found in \citet{abadie2006large}, \citet{lin2023estimation}, 
\citet{portier2024scalable}, \citet{holzmann2026multivariate}, and \citet{viel2025convergence}. Although this assumption can be a standard
approximation in practice, the way rates of convergence for the estimators depend on the approximated support diameter or density bounds,
for instance, was often not made explicit in previous studies. In this paper, we show that the same rates of convergence can be obtained
under milder assumptions on the covariate supports and distributions. We extend previous results to unbounded domains, and we also allow
probability densities to go to zero near the boundary of the support. In contrast, we introduce conditions that measure the transferability
between the two probability measures $P$ and $Q$, based on the integrability of functions linked to the importance $\md Q/\md P$. 
Studying the moments of the density ratio is a natural way to measure transferability between the source and target distributions 
\citep{ma2023optimally}. Similar measures were obtained on bounded supports using ball-mass ratios, where the singularity between the 
source and target distributions is quantified via the \emph{transfer exponent} \citep{kpotufe2021marginal}; \citep{suk2021self};
\citep{cai2024transfer}, or a variant using $\alpha$-families of distributions \citep{pathak2022new}. Eventually, the transfer function
\citep{zamolodtchikov2026minimax} measures the transferability on unbounded domains using local mass assumptions on the two probability
distributions. In this paper, we use a related transferability measure that can be applied to wider classes of multivariate distributions,
including Gaussian distributions. For our specific problem, we even allow the target distribution to have a heavier tail than the source
distribution.

In a second direction, we show that if the covariate observations $X$ and $X^*$ are given as vectors in some Euclidean space $\R^d$, the 
associated distributions $P$ and $Q$ need not have density with respect to the Lebesgue measure of $\R^d$. Instead, the relevant 
information lies in the \emph{intrinsic dimensions} of $P$ and $Q$, characterised by the volumes of balls under these measures. It means 
that the observations may be located on a $d_0$-dimensional submanifold of $\R^d$, up to some small noise. We stress that no knowledge
about the supports or the intrinsic dimensions is required in order to compute the estimators. Nonetheless, the size-dependent rates of 
convergence that we obtain depend only on the intrinsic dimensions of $P$ and $Q$ and not on the dimension $d$ of the overlying space.

Let $d$ be a positive integer, $\mathcal Y$ a measurable space, $h:\Rd\times\mathcal Y\to\R$ a given measurable function, and let $P_{
X,Y}$ and $P_{X^*,Y^*}$ be two probability measures on $\Rd\times\mathcal Y$. We intend to estimate the expectation $$e(h):=\E[h(X^*,
Y^*)]=\int_{\Rd\times\mathcal Y}h(x,y)\md P_{X^*,Y^*},$$ using only an unlabelled sample $(X_j^*)_{j=1,\dots,m}$ from $P_{X^*}$ and an
independent labelled sample $(X_i,Y_i)_{i=1,\dots,n}$ from $P_{X,Y}$. This formulation enables us to tackle regression ($\mathcal Y=\R$) 
and classification ($\mathcal Y=\{-1;1\}$) problems at the same time. The \emph{source} distribution $P:=P_X$ and the \emph{target} distribution 
$Q:=P_{X^*}$ are assumed to have support in some (possibly unknown) Borel set $\X\subset\R^d$. We can then equivalently write $$e(h)=\E[
g_h(X^*)]=\int_\X g_h(z)\md Q(z),$$ where $g_h:z\in\X\mapsto\E[h(X^*,Y^*)\mid X^*=z]\in\R$ is the regression function. Note that by the 
covariate shift assumption, the conditional distributions of $Y$ given $X$ and of $Y^*$ given $X^*$ are the same, so we also have
$g_h(z)=\E[h(X,Y)\mid X=z]$. When there is no ambiguity, we shall simply write the function $g$ instead of $g_h$. The final estimators 
we consider have the form $$\hat e(h):=\hat e_{m,n}\big(\hat h\big):=\meanm \hat h(X_j^*),$$ where $\hat h$ is a random function independent 
from the target sample $(X_j^*)_{j=1,\dots,m}$. When the pointwise estimator is linear in the observed labels, that is when $\hat h(z)=
\sum_{i=1}^nw_i(z)h(X_i,Y_i)$ for some random $(X_1,\dots,X_n)$-measurable weights $w_1(z),\dots,w_n(z)\in\R$, then the final
estimator can be written as the weighted average $$\hat e(h)=\sum_{i=1}^n\left(\meanm w_i(X_j^*)\right)h(X_i,Y_i).$$ 
We consider non-parametric pointwise estimators $\hat h(z)$ that are based on nearest-neighbour algorithms. More precisely, for a fixed 
nonnegative integer $L$, the pointwise estimator $\hat h_L(z)$ approximates the regression function $g_h$ by a polynomial of order $L$
inside the $k$th-order Voronoï cell created around the point $z$ with the sample $(X_i)_{i=1,\dots,n}$. The coefficients of the polynomial
are then fitted by a least-square minimisation using the source observations $(X_i,Y_i)$ such that $X_i$ belongs to this Voronoï cell. The
case where $L=0$ is of special interest, where we approximate the function $g_h$ in the Voronoï cell by a constant.


\bigskip

To summarise, the key contributions of this paper are the following.
\begin{enumerate}
\item We show that the usual rates of convergence for $k$-NN matching and local polynomial estimators extend to situations with general
unbounded supports, where the covariate densities are allowed to approach zero and infinity.
\item We relate these rates to the intrinsic dimensions of the source and target distributions, in the case the covariate vectors are
supported on lower dimensional subspaces, up to some small noise.
\item We discuss the general conditions we introduce on the relative behaviour of the source and target distributions.
\item We apply our results to the estimation of treatment effects under the same general assumptions for the covariate vector, and we also 
allow the probability to assign a given treatment to approach zero.
\end{enumerate}

This paper is organised as follows. Section \ref{simple} contains the results for the rates of the matching and the local polynomial
estimators on an unbounded domain, under simple design assumptions that show the effect of the covariate shift transferability, followed by a
discussion of minimax lower bounds. Section \ref{general} contains extensions of these results for both estimators under general design
assumptions. An application of these results to the estimation of average treatment effects can be found in Section \ref{ate}. In Section
\ref{ratio}, the conditions on the source and target distributions are shown to be general and easy to interpret, even when the
distributions are supported on manifolds. Possible extensions of the results are discussed in Section \ref{extensions}. Numerical 
experiments are given in Section \ref{experiments}. A conclusion is given in Section \ref{conclusion} and proofs of our results can be
found in Section \ref{proofs}. Technical lemmas are gathered in an Appendix Section.

\bigskip

\section{Rates for the estimators in a special case}\label{simple}

\subsection{Notation}\label{notation}

We denote by $\N^*$ the set of positive integers and by $\Rs$ the set of positive real numbers. Let $d\in\N^*$, the Euclidean space of 
dimension $d$ is written $\Rd$. We denote by $\norm\cdot$ an arbitrary norm on $\Rd$ and by $B(z,r)$ the closed ball in $\Rd$ with center 
$z\in\Rd$ and radius $r\in\Rs$. The derivative of order $\la\in\N^*$ of a function $f$ defined on a subset of $\Rd$ is written $D^\la f$, 
with $D^1f$ being identified to the gradient $\nabla f$. We let $\norm f_{\infty,A}$ denote the supremum of the norm of $f$ on the set 
$A\subset\Rd$. The indicator function $\1 A$ is equal to $1$ on $A$ and $0$ on the complement of $A$. The set $\Conv(A)$ is the convex 
hull of $A$ in $\Rd$. For $a,b\in\R$, $a\wedge b$ and $a\vee b$ denote, respectively, the minimum and the maximum between $a$ and $b$. 
Finally, for $\ell\in\R,\,\ceil\ell$ designates the smallest integer greater than or equal to $\ell$, while $\floor\ell$ designates the largest 
integer smaller than or equal to $\ell$.

For $n\in\N^*$, we denote by $\ieu$ the set of integers $i$ such that $1\le i\le n$. For all $k\in\ieu[n+1]$ and $Q$-almost all $z\in\X$, 
we set $\hat\tau_k(z)$ the distance between $z$ and its $k$th-nearest neighbour among $(X_i)_{i\in\ieu}$, with the convention 
$\hat\tau_{n+1}(z):=\infty$, and we set $I_k(z)\subset\ieu$ the set of indexes of the $k$-nearest neighbours to $z$, that is,
$$I_k(z)=\{i\in\ieu\mid\norm{X_i-z}<\tak\}.$$ To avoid possible ties, we assume that for $Q$-almost all $z\in\X$, the distribution 
$P$ does not charge the spheres of center $z$ and with radius $r\in[0,r_0]$, where $r_0\in\Rs$ is fixed. Under this assumption, 
we know that, almost-surely, for $Q$-almost all $z\in\X,\,I_k(z)$ has cardinal $k$. Note that this assumption is not very restrictive 
even for distributions supported on submanifolds.

\subsection{Rates for the matching estimator on an unbounded domain}\label{match_simple}

In this section, we provide a direct extension of the results on the convergence rates for the matching estimator when the covariate vector 
has a density that is allowed to approach zero. Let $k\in\ieu$ and $r_0\in\Rs$ be fixed. We consider the final estimator $\hat e_0(h)=
\hat e_{m,n}\big(\hat h_0\big)$, with the choice of the classical pointwise matching estimator, defined for all $z\in\X$ by 
$$\hat h_0(z):=\meank h(X_i,Y_i)\text{ if }\tak\le r_0,\text{ else }\hat h_0(z):=0.$$

Censoring when the NN-radius is too large is in general necessary with unbounded supports to avoid high-distance integrability 
difficulties. It is also justified by the fact that, for a general smooth function $g$, no relevant information about the quantity $g(z)$
can be deduced from the observations when they are all far from the point $z$. The exact value of the censor radius $r_0$ only affects
the constants in the rates of convergence of the estimators, and its tuning is beyond the scope of this paper. We can therefore consider
$r_0=1$, which generally has little impact on the conditions below where the radius $r_0$ is used. It is also possible to omit censoring under
additional assumptions on the regression function $g$, for example, if $g$ is bounded $Q$-almost surely on $\R^d$. In this case, any unknown 
radius $r_0$ can be used in the conditions below.

\bigskip

We now state conditions on the distributions of the random variables $(X,Y)$ and $X^*$. In this section, we consider a simple design 
that makes the conditions on the relative behaviour of the source and target distributions appear clearly. A more general setup is
described in Section \ref{general}.

\begin{enumerate}[label=(X\arabic*), wide=0.5em, leftmargin=*]
\item\label{cond:X1} The conditional variance $\Var(h(X,Y)\mid X)$ is bounded by some constant $\s^2\in\Rs$. There exists $\ell\in(0,1]$
such that the regression function $g$ is bounded and $\ell$-H\"older continuous on $\Rd$.
\item\label{cond:X2} The distributions $P$ and $Q$ are absolutely continuous with respect to the Lebesgue measure on $\Rd$, with
respective densities $f_P$ and $f_Q$. Besides, there exist constants $c_P,C_Q\in\Rs$ such that for all $z\in\X$ and all
$r\in[0,2r_0]$, $P(B(z,r))\ge c_Pf_P(z)r^d$, and $Q(B(z,r))\le C_Qf_Q(z)r^d$.
\item\label{cond:X3} The following integrals are finite: $$\int_\X\frac{f_Q(z)^2}{f_P(z)}\md z<\infty,\text{ and}\quad
\int_\X\frac{f_Q(z)}{f_P(z)^{1/2}}\md z<\infty.$$
\end{enumerate}

Condition \ref{cond:X1} is standard in the non-parametric regression literature. Condition \ref{cond:X2} allows us to control the
volume of a ball under $P$ and $Q$ by the volume of the ball for the Lebesgue measure times the density of $P$ or $Q$ taken at the
center of the ball. This condition, which will be relaxed in the next section, is satisfied for several classes of distributions
on unbounded domains, such as exponential distributions or Pareto distributions, and it has been used in previous studies on covariate shift 
learning \citep{zamolodtchikov2024transfer}; \citep{zamolodtchikov2026minimax}. Finally, Condition \ref{cond:X3} features the
\emph{covariate shift transferability} condition. The first integral condition is equivalent to asking the importance function 
$\mathrm dQ/\mathrm dP=f_Q/f_P$ to be integrable with respect to the measure $Q$, or equivalently square-integrable with respect 
to the measure $P$. This condition was already present in \citet{ma2023optimally} or in studies on importance estimation; see
\citet{kanamori2009least}. Meanwhile, the second integral condition is very similar to the first one, and the two will be equivalent in some 
cases. Following the definitions of the transfer function and the integrability index $\gamma^*$ from \citet{zamolodtchikov2026minimax},
our condition means that $\gamma^*(P,Q)\ge 1/2$. In many situations, we can interpret this condition by saying that the target distribution 
$Q$ is allowed to have a heavier tail than the source distribution $P$, as long as it is not twice as heavy.
We are now in a position to state the following theorem.

\begin{thrm}[Mean-squared error]\label{simple_mse}
Suppose that we have Conditions \ref{cond:X1} to \ref{cond:X3}. Then there exists a constant $C\in\Rs$ such that the mean-squared
error of the estimator $\hat e_0(h)$ satisfies $$\E[(\hat e_0(h)-e(h))^2]\le C\left(\frac 1m+\frac 1n+\left(\frac kn\right)^{
1\wedge 2\ell/d}\right).$$
\end{thrm}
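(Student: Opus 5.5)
We decompose the error by conditioning on the source sample. Write $\bar h(z):=\E[\hat h_0(z)\mid X_1,\dots,X_n]=\meank g(X_i)\1{\tak\le r_0}$ and split
$$\hat e_0(h)-e(h)=\Big(\meanm \hat h_0(X_j^*)-\int\hat h_0\md Q\Big)+\int(\hat h_0-\bar h)\md Q+\int(\bar h-g)\md Q .$$
The first term is, conditionally on the source sample, a centred average of $m$ i.i.d. variables. Its second moment is at most $\frac1m\E[\int\hat h_0^2\md Q]$, and this is bounded because $g$ is bounded and $\Var(h(X,Y)\mid X)\le\s^2$ by \ref{cond:X1}, giving $\E[\hat h_0(z)^2]\le\|g\|_\infty^2+\s^2/k$. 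This yields the $1/m$ term.

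For the second (variance) term, the linear structure gives $\int(\hat h_0-\bar h)\md Q=\sum_{i=1}^n W_i\,\varepsilon_i$ with $\varepsilon_i=h(X_i,Y_i)-g(X_i)$ and $W_i=\frac1k Q(\{z:i\in I_k(z),\tak\le r_0\})$. Conditionally on the $X_i$, the $\varepsilon_i$ are independent and centred, so the second moment is at most $\s^2\E[\sum_i W_i^2]=\frac{\s^2}{k^2}\,n\,\E[Q(A_1)^2]$, where $A_1$ is the (censored) $k$th-order Voronoï region of $X_1$. The key step is to show $\E[Q(A_1)^2]\lesssim k^2/n^2$. Expanding $Q(A_1)^2=\iint \1{1\in I_k(z)}\1{1\in I_k(y)}\md Q(z)\md Q(y)$, we use $\Pb(1\in I_k(z), \tak\le r_0)$ together with the exponential tail of $nP(B(z,\tak))$ to reduce to the bound
$$\iint \Pb\big(X_1\in B(z,\tak)\cap B(y,\hat\tau_{k+1}(y))\big)\md Q\,\md Q\;\lesssim\; \frac{k^2}{n^2}\int \frac{f_Q^2}{f_P}\md z .$$
Heuristically, the cell of $X_1$ has radius about $(k/(nf_P))^{1/d}$, so by \ref{cond:X2} its $Q$-mass is about $C_Qf_Q k/(c_Pnf_P)$. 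Integrating, $\E[Q(A_1)^2]\approx\E[(kf_Q/(nf_P))^2(X_1)]=(k/n)^2\int f_Q^2/f_P$, which is finite by \ref{cond:X3}. To make this rigorous, we fix $z$, use $\Pb(P(B(z,\tak))>t)$ bounds (binomial tails), and the inclusion $|z-y|\le 2\tak$ when both cells contain $X_1$. We then apply \ref{cond:X2} on balls of radius up to $2r_0$, which is why the censoring at $r_0$ matters.

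For the third (bias) term, we bound by Jensen $\E[(\int(\bar h-g)\md Q)^2]\le \int\E[(\bar h-g)^2]\md Q$. We then split on the event $\tak\le r_0$. There, the Hölder property gives $|\bar h-g|\le L\tak^{\ell}$, and $\E[\tak^{2\ell}\1{\tak\le r_0}]\lesssim (k/(nc_Pf_P(z)))^{2\ell/d}\wedge r_0^{2\ell}$, using $P(B(z,r))\ge c_Pf_Pr^d$ and Beta/binomial moment bounds. Off this event, $|g|^2\|g\|_\infty$-bounded times $\Pb(\tak>r_0)\le\Pb(\mathrm{Bin}(n,c_Pf_Pr_0^d)<k+1)$. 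The remaining difficulty, and the main obstacle, is integrating these against $Q$ when $f_P(z)\to0$. For $2\ell/d<1$ we write $\min\{(k/nf_P)^{2\ell/d},1\}\le (k/n)^{2\ell/d}(1/f_P)^{1/2}\cdot f_P^{1/2-2\ell/d}$-type interpolations. The cleanest route is $\min(a^{s},1)\le a^{s\wedge 1/2}$ style bounds together with $\int f_Q f_P^{-1/2}<\infty$, which handles the tail region where $k/(nf_P)\ge1$. Likewise, the censoring term $\Pb(\tak>r_0)\lesssim \min(1,k/(nf_P))$ integrated against $Q$ is controlled via $\int f_Q/f_P^{1/2}$ and gives at most $(k/n)^{1/2}$ squared-error contributions of order $k/n$ after Cauchy–Schwarz. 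Collecting the three terms gives $C(1/m+1/n+(k/n)^{1\wedge2\ell/d})$. I expect the careful tail integration in this bias/censoring step, showing that exactly the two integrals in \ref{cond:X3} suffice, to be the hardest part.
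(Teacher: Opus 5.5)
\textbf{There is a genuine gap in your bias step.} Your handling of the target-sample fluctuation and of the label-noise term is sound. It is essentially the paper's argument: your $\sum_iW_i^2$ is its double integral over $I_k(y)\cap I_k(z)$, controlled through $Q(B(z,2\tak))\le 2^d\frac{C_Qf_Q(z)}{c_Pf_P(z)}P(B(z,\tak))$ and $\E[P(B(z,\tak))]=\frac{k+1}{n+1}$.

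The bias step fails because the Jensen bound $\E[(\int(\bar h-g)\md Q)^2]\le\int\E[(\bar h-g)^2]\md Q$ is too lossy under \ref{cond:X3}. Your own interpolation $\min(a^s,1)\le a^{s\wedge 1/2}$, with $a=k/(nf_P)$, only delivers $(k/n)^{(2\ell/d)\wedge(1/2)}$. This misses the claimed rate whenever $2\ell/d>1/2$, in particular in the parametric regime $d\le 2\ell$. No sharper integration can repair this, because the Jensen bound itself is too large. Take $d=1$, $g\equiv c\neq 0$, $f_P(z)=e^{-z}$ and $f_Q(z)=0.6\,e^{-0.6z}$ on $\R_+$; Conditions \ref{cond:X1} to \ref{cond:X3} all hold. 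Then $\bar h-g=-c\1{\tak>r_0}$. For $z\ge\ln n+r_0$ we have $P(B(z,r_0))\le 1/n$, hence $\Pb(\tak>r_0)\ge(1-1/n)^n\ge 1/4$. So $\int\E[(\bar h-g)^2]\md Q\gtrsim n^{-0.6}$, whereas the theorem asserts $O(1/m+1/n)$ for $k=1$. Likewise, your remark that the censoring term becomes of order $k/n$ ``after Cauchy--Schwarz'' does not hold. Once you have applied Jensen, you are left with the first-power integral $\int\Pb(\tak>r_0)\md Q$, and nothing remains to be squared.

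\textbf{The missing idea is to keep the product structure of the squared bias.} Write $\bar h-g=\beta_n-g\1{\tak>r_0}$ with $\beta_n(z):=\meank(g(X_i)-g(z))\1{\tak\le r_0}$. Treat the two parts separately, expanding the square of each $Q$-integral as a double integral over $(y,z)$.

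For the $\beta_n$ part, first take pairs whose NN-balls are disjoint, $\hat\tau_{k+1}(y)+\tak<\norm{y-z}$. The two neighbour samples are then conditionally independent given the radii (Lemma \ref{cond_ind}), so each factor can be replaced by its conditional mean, which is bounded by a multiple of $\tak^\ell$. The negative correlation of the radii (Lemma \ref{negative_correlation}) then bounds this part by $\bigl(\int\E[\tak^\ell\mid\tak\le r_0]\md Q\bigr)^2\lesssim(k/n)^{2\gamma}\bigl(1+\int f_Qf_P^{-1/2}\bigr)^2$, with $\gamma:=\frac\ell d\wedge\frac12$. Squaring a first moment is precisely what turns the cap $1/2$ in the second integral of \ref{cond:X3} into the cap $1$ in the rate. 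On overlapping balls, use $\beta_n(y)\beta_n(z)\le\frac12(\beta_n(y)^2+\beta_n(z)^2)$ together with your variance argument based on $Q(B(z,2\tak))$, which gives $\frac kn\int f_Q^2/f_P$.

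For the censoring part, split on the distance between $y$ and $z$. When $\norm{y-z}>2r_0$, negative correlation gives $\bigl(\int\va g\,\Pb(\tak>r_0)\md Q\bigr)^2\lesssim\frac kn\bigl(\int f_Qf_P^{-1/2}\bigr)^2$. When $\norm{y-z}\le 2r_0$, the bound $Q(B(z,2r_0))\Pb(\tak>r_0)\lesssim\frac{f_Q(z)}{f_P(z)}\E[P(B(z,\tak))]$ yields $\frac kn\int g^2f_Q^2/f_P$.

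This is the route of the paper's proof of Theorem \ref{bias}. It also explains why both integrals in \ref{cond:X3} put twice the exponent on $f_Q$ as on $f_P$.
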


In Theorem \ref{simple_mse}, because the upper bound of the variance term $C(m^{-1}+n^{-1})$ does not depend on the parameter $k$, 
taking $k=1$ can be optimal for the mean-squared error as it minimises the upper bound of the bias term $C(k/n)^{1\wedge 2\ell/d}$. 
The estimator $\hat e_0(h)$ then achieves the parametric rate of convergence $m^{-1}+n^{-1}$ as soon as $d\le 2\ell$, where $\ell\in(0,1]$
is the regularity of the regression function. Therefore, it requires $d\le 2$ for a Lipschitz function $g$. In higher dimensions and 
with more regularity on the function $g$, we can give an extension to the estimator $\hat e_0(h)$ that has a lower bias than in the 
upper bound of Theorem \ref{simple_mse}. This extension is the object of the next section.

\subsection{Rates for the local polynomial estimator on an unbounded domain}\label{poly_simple}

In this subsection, we analyse the rates of convergence of the estimator $\hat e_L(h)=\hat e_{m,n}\big(\hat h_L\big)$, with the choice of 
a more general pointwise estimator $\hat h_L$, introduced by \citet{holzmann2026multivariate} to reduce the bias when the regression 
function $g$ has more regularity. They use local polynomials of order $L\in\N$ to approximate the regression function inside the Voronoï 
cells, where the choice $L=0$ leads to the classical matching estimator from the previous section.

We set $N_L$ the set of $d$-tuples of non-negative integers whose sum does not exceed $L$, with cardinality $K^*:=K^*(d,L)$. For $\alpha
\in N_L$, we write $\zeta_\alpha(z,T):=(T-z)^\alpha:=\prod_{j=1}^d(T_j-z_j)^{\alpha_j}$, which is a local monomial in $T$ in $d$
coordinates, of degree at most $L$, and centered at the point $z$. Finally, $\zeta(z,T)=(\zeta_\alpha(z,T))_{\alpha\in N_L}$ will denote
the vector in $\R^{N_L}$ made up of all such local monomials. The estimator of the regression function and its derivatives is the least 
square estimator $$\hat G_L(z):=\underset{b\in\R^{N_L}}\argmin\sum_{i\in I_k(z)}\left(h(X_i,Y_i)-b^\top\zeta(z,X_i)\right)^2.$$
Therefore, the pointwise estimator of the regression function is 
$$\hat h_L(z):=e_0^\top\hat G_L(z)=e_0^\top M(z)^{-1}\sum_{i\in I_k(z)}h(X_i,Y_i)\zeta(z,X_i)\text{ if }\tak\le r_0,\text{ else }
\hat h_L(z):=0,$$ where $e_0$ is the vector in the canonical basis of $\R^{N_L}$, corresponding to the tuple $(0,\dots,0)\in N_L$, and
$M(z)$ is the square matrix $$M(z):=\sum_{i\in I_k(z)}\zeta(z,X_i)\zeta(z,X_i)^\top=\left(\sum_{i\in I_k(z)}(X_i-z)^{\alpha+\alpha'}
\right)_{\alpha,\alpha'\in N_L}.$$
Note that when $L=0,\,\zeta(z,T)$ is the constant polynomial $1\in\R,\,M(z)=k$, and $\hat h_L(x)$ coincides with the classical estimator 
$\hat h_0(x)$ defined in Section \ref{match_simple}. In what follows, we will need the constant $D:=D(d,L):=\sum_{i=1}^Li\binom{d+i-1}i$. 
In addition to the list of conditions from Section \ref{match_simple}, we consider the following conditions.

\begin{enumerate}[label=(X1'), wide=0.5em, leftmargin=*]
\item\label{cond:X1'} The conditional variance $\Var(h(X,Y)\mid X)$ is bounded by some constant $\s^2\in\Rs$. The regression function $g$
is bounded and $L$-times differentiable on $\Rd$, and its derivative of order $L$ is a bounded and Lipschitz continuous function on $\Rd$.
\end{enumerate}
\begin{enumerate}[label=($\mathcal L$), wide=0.5em, leftmargin=*]
\item\label{cond:L} There exists a constant $V_P\in\Rs$ such that for all $z\in\X$ and all $x\in B(z,3r_0),\,f_P(x)\le V_Pf_P(z)$.
\end{enumerate}

Condition \ref{cond:X1'} asks for more regularity on the regression function $g$ than in the previous section, while Condition 
\ref{cond:L} complements Condition \ref{cond:X2} with a local upper bound on the source density. As we discuss in Section \ref{ratio}, it 
is satisfied for the same types of distributions as the ones satisfying Condition \ref{cond:X2}.
Adapting the nice proof technique of \citet{holzmann2026multivariate}, we obtain the following result. 

\begin{thrm}[Mean-squared error]\label{simple_poly_mse}
Suppose that we have Conditions \ref{cond:X1'}, \ref{cond:X2}, \ref{cond:X3}, and \ref{cond:L}. Then, taking $k\ge(2D+1)K^*$, there
exists a constant $C\in\Rs$ such that the mean-squared error of the estimator $\hat e_L(h)$ satisfies
$$\E[(\hat e_L(h)-e(h))^2]\le C\left(\frac 1m+\frac 1n+\left(\frac kn\right)^{1\wedge 2(L+1)/d}\right).$$
\end{thrm}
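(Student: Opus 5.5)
We decompose the error as for Theorem \ref{simple_mse}. Writing $\bar h(z):=\E[\hat h_L(z)\mid X_1,\dots,X_n]$, we split
$$\hat e_L(h)-e(h)=\Big(\meanm \hat h_L(X_j^*)-\int\hat h_L\md Q\Big)+\int(\hat h_L-\bar h)\md Q+\int(\bar h-g)\md Q .$$
The first term is a target-sample fluctuation. Conditionally on the source sample it has variance at most $m^{-1}\int\hat h_L^2\md Q$, so it contributes $O(1/m)$ once $\E\int\hat h_L^2\md Q$ is bounded. The second term is linear in the noise $h(X_i,Y_i)-g(X_i)$ with weights $\omega_i:=\int w_i(z)\md Q(z)$, where $w_i(z)=e_0^\top M(z)^{-1}\zeta(z,X_i)\1{i\in I_k(z),\tak\le r_0}$. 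Its conditional variance is at most $\s^2\sum_i\omega_i^2$. The third term is the bias: on the event $\{\tak\le r_0\}$ we Taylor-expand $g$ at $z$ to order $L$ with Lipschitz remainder, and the least-squares fit reproduces polynomials exactly. On the censored event we simply pay $\norm g_\infty$.

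The key technical input is a uniform control of the weights, adapting \citet{holzmann2026multivariate}. We aim to show that for $k\ge(2D+1)K^*$ the quantity $\sum_{i\in I_k(z)}|w_i(z)|$ has bounded moments, uniformly in $z$, under the local density comparability of Conditions \ref{cond:X2} and \ref{cond:L}. After rescaling by $\tak$, the points $(X_i-z)/\tak$ for $i\in I_k(z)$ are, conditionally on $\tak$, i.i.d.\ in the unit ball with density bounded above and below relative to the uniform law. This uses $f_P(x)\le V_Pf_P(z)$ on $B(z,3r_0)$ together with $P(B(z,r))\ge c_Pf_P(z)r^d$. The smallest eigenvalue of the rescaled Gram matrix then has a small-ball probability of order $\e^{c}$. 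The condition $k\ge(2D+1)K^*$ (with $D$ the total degree of the determinant) is exactly what makes $\E[\lambda_{\min}^{-2}]$ finite. We expect this step to be the main obstacle: the determinant and anti-concentration argument has to be carried out with densities that are only locally comparable and with a constant independent of $z$.

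Given this, the bias becomes
$$|\bar h(z)-g(z)|\le C\,\E\big[\tak^{L+1}\wedge r_0^{L+1}\big]+\norm g_\infty\Pb(\tak>r_0).$$
Because $P(B(z,\tak))$ follows a Beta$(k,n-k+1)$ law and Condition \ref{cond:X2} gives $P(B(z,r))\ge c_Pf_P(z)r^d$, we obtain
$$\E[\tak^{2(L+1)}\wedge r_0^{2(L+1)}]\lesssim \big(k/(nf_P(z))\big)^{2(L+1)/d}\wedge 1\le (k/n)^{1\wedge 2(L+1)/d}\,f_P(z)^{-1/2}.$$
For the second inequality we interpolate via $a^{s}\wedge1\le a^{1/2}$ when $s\ge 1/2$, and otherwise use the cap at $r_0$. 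Integrating against $f_Q$ and using $\int f_Q f_P^{-1/2}<\infty$ from Condition \ref{cond:X3}, Jensen then bounds the squared integrated bias by $C(k/n)^{1\wedge 2(L+1)/d}$. The censoring probability is handled in the same way.

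For the variance term we write
$$\omega_i\le \sup_z\Big(\sum_l|w_l(z)|\Big)\,Q(A_i),$$
where $A_i=\{z: i\in I_k(z)\}$ is the $k$th-order Voronoï region of $X_i$. Cauchy–Schwarz and the weight-moment bound reduce the task to $\E\sum_iQ(A_i)^2\lesssim k^2/n$. As in the proof of Theorem \ref{simple_mse}, expanding $Q(A_i)^2$ as a double integral over $z,z'$ and using Condition \ref{cond:X2}, the probability that $X_i$ is among the $k$ nearest neighbours of both $z$ and $z'$ is controlled via ball masses. This gives a bound of order $n^{-1}\int f_Q^2/f_P$, which is finite by Condition \ref{cond:X3}. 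The same computation bounds $\E\int\hat h_L^2\md Q$. Combining the three terms gives the stated rate $m^{-1}+n^{-1}+(k/n)^{1\wedge2(L+1)/d}$, with $k$ absorbed into the constant.
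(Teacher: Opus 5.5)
Your three-term decomposition (target fluctuation, noise term, conditional bias) matches the paper's. The paper obtains this statement by checking \ref{cond:A1}--\ref{cond:A3} with $d_P=d_Q=d$, $p\propto f_P$, $\Rpq\propto f_Q/f_P$, $v$ bounded via \ref{cond:L}, $\gres=\gcov=1$ and $\gbias=\frac12\wedge\frac{L+1}d$, and then applying Theorems \ref{variance} and \ref{bias}. However, two of your steps fail as written.

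\emph{Conditional bias.} $B_n=\int(\bar h-g)\md Q$ is a function of $X_1,\dots,X_n$, and the mean-squared error contains $\E[B_n^2]$. Your bound of $\va{\bar h(z)-g(z)}$ by $C\,\E[\tak^{L+1}\wedge r_0^{L+1}]+\dots$ is not a valid pointwise statement: the right-hand side must involve the random radius and the random weights. Read as a bound on $\E[\bar h(z)]-g(z)$, it only controls $(\E B_n)^2$ and ignores $\Var(B_n)$.

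If you mean Jensen, $\E[B_n^2]\le\int\E[(\bar h-g)^2]\md Q$, then your interpolation $(k/(nf_P))^{s}\wedge1\le(k/n)^{1\wedge s}f_P^{-1/2}$ with $s=2(L+1)/d$ is false. Where $f_P(z)=k/n$ the left side is $1$ and the right side is $(k/n)^{(1\wedge s)-1/2}<1$ as soon as $s>1/2$. The inequality $a^s\wedge1\le a^{1/2}$ only gives the rate $(k/n)^{1/2}$.

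Unweighted Jensen cannot be rescued under \ref{cond:X3}. Take $g\equiv1$, so the local fit is exact and $\bar h-g=-\1{\tak>r_0}$. Take $P=Q$ with density proportional to $z_1^{-\mu-1}$ on $[1,\infty)\times[0,1]^{d-1}$, $\mu>1$, and $r_0\le1/2$. All assumptions hold, yet $\int\E[(\bar h-g)^2]\md Q=\int\Pb(\tak>r_0)\md Q\gtrsim Q(f_P\lesssim k/n)\asymp(k/n)^{\mu/(\mu+1)}\gg k/n$.

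A \emph{weighted} Cauchy--Schwarz inequality repairs this. Use $B_n^2\le\int f_P^{-\gbias}\md Q\cdot\int(\bar h-g)^2f_P^{\gbias}\md Q$ with $\gbias=\frac12\wedge\frac{L+1}d$. Lemma \ref{key} together with Lemma \ref{NN_radius} (1) and (3) gives $\E[(\bar h(z)-g(z))^2]\lesssim(k/(nf_P(z)))^{2\gbias}$. Hence $\E[B_n^2]\lesssim(k/n)^{2\gbias}(\int f_Qf_P^{-\gbias}\md z)^2$, which is finite since $f_P^{-\gbias}\le1+f_P^{-1/2}$.

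This route is simpler than the paper's proof of Theorem \ref{bias}. That proof expands $\E[B_n^2]$ over pairs $(y,z)$, factorises it on disjoint neighbour balls through Lemmas \ref{cond_ind} and \ref{negative_correlation}, and carries an extra overlap term $\Ccov(k/n)^{\gcov}$. The decoupling is what is needed when only the conditional mean of the local bias is small, as in Theorem \ref{better_bias}.

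\emph{Noise term.} Bounding $\omega_i\le\sup_z(\sum_l\va{w_l(z)})\,Q(A_i)$ and then applying Cauchy--Schwarz requires moments of a supremum over all $z\in\X$ of the local least-squares weights. The small-ball argument behind Lemma \ref{key} only controls $\E[e_0^\top M(z)^{-1}e_0\mid\tak]$ for each fixed $z$, and pointwise bounds give no control of that supremum uniformly in $n$. For $L=0$ the weights equal $1/k$ and the issue disappears.

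For $L\ge1$ you have to keep the weights inside the double integral, as the paper does. Write $\sum_i\omega_i^2$ as an integral over $(y,z)$ of $\sum_{i\in I_k(y)\cap I_k(z)}w_i(y)w_i(z)$. Use $w_i(y)w_i(z)\le\frac12(w_i(y)^2+w_i(z)^2)$ and symmetry, and note that a common neighbour forces $\norm{y-z}\le\hat\tau_{k+1}(y)+\tak$.

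If $\hat\tau_{k+1}(y)\le\tak$, this yields the factor $Q(B(z,2\tak))\lesssim(f_Q(z)/f_P(z))\,P(B(z,\tak))$, which with Lemma \ref{key} gives $n^{-1}\int f_Q^2/f_P\md z$. In the other case the weight at $z$ must be controlled conditionally on $\hat\tau_{k+1}(y)$, which is the separate Lemma \ref{key_cov}. The $1/m$ term needs none of this, since $\E\int\hat h_L^2\md Q$ is bounded pointwise by Lemma \ref{key}.
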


Theorem \ref{simple_poly_mse} shows similar rates of convergence than in Theorem \ref{simple_mse}, that is, $m^{-1}+n^{-1}+(k/n)^{1\wedge 
2\ell/d}$, where $\ell$ denotes the regularity of the regression function, but this time allowing $\ell>1$. As a consequence, the local
polynomial estimator $\hat e_L(h)$, with a constant parameter $k$, can achieve the parametric rate of convergence $m^{-1}+n^{-1}$ in
dimension $d>2$, as long as we choose $L+1\ge d/2$. In Section \ref{general}, we show that the same result holds under milder assumptions on
the regression function $g$ and the covariate distributions $P$ and $Q$. But first, we prove minimax lower bounds for the estimation of
$e(h)$ that show that the integrability of the importance function $f_Q/f_P$, as in Condition \ref{cond:X3}, affects the best achievable rate
of convergence.

\subsection{Minimax lower bounds}\label{minimax_rates}

In this section, we provide minimax lower bounds for the estimation of the expectation $e(h)$ depending on the transferability between the 
source and the target probability measures. We start by fixing the dimension $d\in\N^*$ and a regularity parameter $\ell\in\N^*$, which
can be arbitrarily large. In this study, we consider the regression setup $Y=g(X)+\e$, where $\e$ is a Gaussian random variable
independent from $X$, and we consider the function $h(x,y):=y$. To state minimax lower bounds, we introduce a class for the distributions
of $X,Y,X^*$ that satisfy a list of conditions that focus on the integrability of the function $f_Q^\alpha/f_P^\gamma$, as in Condition
\ref{cond:X3}. Note that in our regression setup, the couple of distributions $(P_{X,Y},Q)$ depends only on the covariate distributions
$P$ and $Q$, the regression function $g:\R^d\to\R$, and the distribution of the noise $\e$. For $\gamma\in[0,1]$ and $\Ccov\in\Rs$, we say 
that the pair $(P_{X,Y},Q)$ belongs to the class $\Sigma(\gamma,\Ccov)$ when:
\begin{itemize}
\item the noise $\e$ follows the Gaussian distribution $\mathcal N(0,\Lambda^2)$ for some $\Lambda^2\le 1$,
\item the regression function $g$ has derivatives up to order $\ell$ on $\R^d$, and for all $s\in\ieu[\ell][0],\,\frac{\norm{D^sg}_{\infty,
\R^d}}{s!}\le\Lambda$,
\item the distributions $P$ and $Q$ have density with respect to the Lebesgue measure on $\R^d$, with respective densities $f_P$ and 
$f_Q$ that have infinitely many derivatives, all bounded by $1$, on their convex support, and
\item we have the transferability control $$\Lambda^2\int_{\R^d}\frac{f_Q(z)^2}{f_P(z)^\gamma}\md z\le\Ccov,\text{ and}\quad\Lambda\int_{\R^d}
\frac{f_Q(z)}{f_P(z)^{\gamma/2}}\md z\le\sqrt{\Ccov}.$$
\end{itemize}

In the last point above, we relax Condition \ref{cond:X3}, which corresponds to the case where $\gamma=1$, allowing the target distribution 
to have a heavier tail than in the previous sections. However, the following theorem shows that we can then lose the parametric rate of
convergence.

\begin{thrm}\label{minimax}
There exists a universal constant $\mathcal K\in\Rs$ such that for all $\gamma\in[0,1],\,\Ccov\in\Rs$ and $n\in\N^*$ large enough,
$$\underset{\hat E_n}\inf\;\underset{(P_{X,Y},Q)\in\Sigma(\gamma,\Ccov)}\sup\E[(\hat E_n-e(h))^2]\ge\mathcal K\;\frac{\Ccov}{n^\gamma
\ln(n)^2}.$$
\end{thrm}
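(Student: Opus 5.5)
We will prove the lower bound by Le Cam's two-point method. The construction makes the target $Q$ put mass on a region where the source density $f_P$ is tiny, so that the $n$ source observations carry essentially no information about $g$ there. Only the regression function changes between the two hypotheses; the covariate laws $P$ and $Q$ stay fixed. The functional $e(h)=\int g\md Q$ will then differ by an amount $\delta$, while the Kullback--Leibler divergence between the two laws of the full sample $(X_i,Y_i)_{i\le n},(X_j^*)_{j\le m}$ stays bounded. Le Cam's inequality then gives $\inf\sup\E[(\hat E_n-e(h))^2]\gtrsim\delta^2$. The target sample is irrelevant to the KL divergence because $Q$ is common to both hypotheses.

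\textbf{Construction.} Take $d$-dimensional product or radial densities with polynomially decaying tails. Choose $f_P$ smooth, with all derivatives bounded by $1$, and with $f_P(z)\asymp\va z^{-a}$ for large $\va z$. Take $f_Q\asymp f_P^{\beta}$ on a far region $A_n=\{\va z\ge R_n\}$, normalised so that it is a probability density. Under $H_0$ set $g_0=0$. Under $H_1$ set $g_1=\Lambda\phi$, where $\phi$ is a fixed smooth bump with derivatives bounded so that $\norm{D^s g_1}_\infty/s!\le\Lambda$; either $\phi\equiv1$ on $A_n$ with a smooth transition, or $\phi$ is a sum of bumps covering $A_n$.
\begin{itemize}
\item The KL divergence is $n\int (g_1-g_0)^2f_P/(2\Lambda^2)\approx n\,P(A_n)$, up to constants.
\item The separation is $\delta=\Lambda\,Q(A_n)$.
\end{itemize}
We tune $R_n$ so that $P(A_n)\asymp1/n$. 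The task is then to make $Q(A_n)^2\Lambda^2$ as large as possible subject to the class constraints $\Lambda^2\int f_Q^2/f_P^\gamma\le\Ccov$ and $\Lambda\int f_Q/f_P^{\gamma/2}\le\sqrt{\Ccov}$.

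\textbf{Optimisation.} Cauchy--Schwarz heuristics suggest choosing $f_Q\propto f_P^{\gamma}$ on $A_n$, roughly. This gives $\int_{A_n}f_Q^2/f_P^\gamma\approx Q(A_n)^2/\int_{A_n}f_P^\gamma$. Making the constraint tight yields $\Lambda^2Q(A_n)^2\asymp\Ccov\int_{A_n}f_P^\gamma$. We want $\int_{A_n}f_P^\gamma\gtrsim P(A_n)^\gamma=n^{-\gamma}$. A plain Hölder inequality goes the wrong way, so we must pick the tail carefully. We take an almost-critical tail, e.g.\ $f_P(z)\asymp\va z^{-d}\ln(\va z)^{-2}$ in a radial sense, or a one-dimensional analogue built into a product density. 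This makes $P(A_n)\asymp 1/\ln R_n$, while $\int_{A_n}f_P^\gamma$ stays comparable to $P(A_n)^\gamma$ up to logarithmic factors. The factor $\ln(n)^{-2}$ in the statement is exactly this logarithmic loss. The second constraint $\Lambda\int f_Q/f_P^{\gamma/2}$ is handled the same way: with $f_Q\propto f_P^\gamma$ on $A_n$ it reduces to a comparable integral. We also keep a fixed bulk part of $Q$ (for instance a copy of $P$ near the origin) so that $Q$ is a probability measure, and check that this part contributes $O(1)$ to both constraints. We then rescale $\Lambda\le1$ so that both constraints hold with $\Ccov$.

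\textbf{Remaining checks and main obstacle.} We verify the smoothness requirements. The densities must have all derivatives bounded by $1$ on their convex support, which forces heavy tails to be spread out; this is fine since derivatives of slowly decaying functions are small. The functions $g_1$ must satisfy the derivative bound uniformly in $n$, which holds because the bump transition can be placed at scale $R_n\to\infty$. The main obstacle is the simultaneous tuning: choosing $f_P,f_Q$ and $R_n$ so that the KL divergence is $O(1)$, both transfer integrals are at most $\Ccov$, and $\delta^2\gtrsim\Ccov n^{-\gamma}\ln(n)^{-2}$, all with a universal constant independent of $\gamma$. I would first try the one-dimensional construction in the first coordinate, tensorised with a fixed smooth density in the other $d-1$ coordinates. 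I would handle the normalisation of $f_Q$ by mixing with weight depending on $n$, and compute the logarithmic factors explicitly.
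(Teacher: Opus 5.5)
Your framing matches the paper's: Le Cam's two-point method with $P$ and $Q$ fixed and only $g$ perturbed. The divergence over the source sample is then of order $n\int\phi^2f_P$, and the separation is $\int g_1\md Q$. The construction you propose fails, however, precisely at the step you pass over.

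With $f_P(z)\asymp\va z^{-d}\ln(\va z)^{-2}$, or its one-dimensional analogue $z_1^{-1}\ln(z_1)^{-2}$, the function $f_P^\gamma$ is not integrable on $A_n$ for any $\gamma<1$. So ``$f_Q\propto f_P^\gamma$ on $A_n$'' is not a probability density, and $\int_{A_n}f_P^\gamma$ is infinite rather than comparable to $P(A_n)^\gamma$. Even for $\gamma=1$, writing $f_Q=cf_P$ on $A_n$, the second transfer integral contains $\Lambda c\int_{A_n}f_P^{1/2}=\infty$, so it does not ``reduce to a comparable integral''.

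More fundamentally, your optimisation only uses the first constraint, whereas the second one is binding. Since $\norm{g_1}_\infty\le\Lambda$, writing $S:=\{\phi\neq 0\}$,
$$\va{e(g_1)-e(g_0)}\le\Lambda\int_Sf_Q(z)\md z\le(\sup_Sf_P)^{\gamma/2}\,\Lambda\int_{\R^d}\frac{f_Q(z)}{f_P(z)^{\gamma/2}}\md z\le\sqrt{\Ccov}\,(\sup_Sf_P)^{\gamma/2}.$$
The perturbation must therefore sit where $f_P\gtrsim 1/n$ (up to logarithms) while carrying $P$-mass $\lesssim 1/n$. That means a region of bounded size where local density and mass are comparable. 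Your near-critical tail does the opposite: $P(A_n)\asymp 1/\ln(R_n)\asymp 1/n$ forces $\ln(R_n)\asymp n$. Hence $f_P$ is exponentially small in $n$ on $A_n$ and on your transition layer at scale $R_n$. Even after truncating $f_Q$ to make the integrals finite, the separation is exponentially small for every $\gamma>0$.

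The missing idea is to use a source tail whose local density is comparable to its tail mass, and to put the near-criticality in $Q$ instead of $P$. The paper takes $f_P=e^{-z_1}$ on $\R_+\times[0,1]^{d-1}$ and a fixed-width Gaussian bump $g_1=\Lambda\Phi(z_1-r)$ with $e^{-r}\asymp 1/n$, so the divergence stays below $\ln 2$. It then takes $f_Q=\alpha e^{-\alpha z_1}$ with $\alpha=\gamma/2+1/\ln(n)$, just above the critical rate $\gamma/2$.

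Both transfer integrals are then explicit, namely $\Lambda^2\alpha^2\ln(n)/2$ and $\Lambda\alpha\ln(n)$, and saturating the second gives $\Lambda^2=\Ccov(\alpha\ln(n))^{-2}$. The $Q$-mass under the bump is of order $\alpha e^{-\alpha r}$, and $e^{-2\alpha r}\asymp n^{-\gamma}$ because $n^{-2/\ln(n)}=e^{-2}$. So the squared separation is of order $\Ccov n^{-\gamma}\ln(n)^{-2}$.

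The paper's write-up lower-bounds $\int_0^\infty\Phi(x)\,\alpha e^{-\alpha x}\md x$ by $P(\Phi)$, which fails for small $\alpha$. Keeping the factor $\alpha$ yields exactly $\ln(n)^{-2}$. The logarithmic loss therefore comes from spreading $Q$ over a length $\ln(n)$ through this choice of $\alpha$, not from a near-critical source tail.
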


The above theorem shows that the control of the covariate shift transferability is a necessary ingredient to obtain good rates of 
convergence, and an arbitrary number of bounded derivatives for the regression function $g$ cannot compensate for a poor transfer 
exponent $\gamma<1$. In addition, we shall see in Section \ref{polynomial} that the $k$-NN local polynomial estimator achieves 
the rate of convergence $\Ccov\,n^{-\gamma}$ as soon as $d\gamma\le 2\ell$, which shows that in this case the minimax lower bound is 
tight up to a logarithmic factor.

\section{Rates for the estimators under general conditions}\label{general}

In this section, we extend the results from Section \ref{match_simple} under milder assumptions on the distributions of $(X,Y)$ and 
$X^*$. Because we do not assume that the covariate support $\X$ is compact, the regression function and its derivatives need not be 
bounded on the whole domain $\X$, but only locally. We also show that the covariate distributions $P$ and $Q$ need not be absolutely 
continuous with respect to the Lebesgue measure on $\Rd$. This latter assumption was only used to control the volumes of balls under 
the measures $P$ and $Q$, as in the second part of Condition \ref{cond:X2}. We now therefore work directly with conditions on the 
volumes of balls under $P$ and $Q$, which offers several benefits. First, they give definitions for the intrinsic dimensions of 
the measures $P$ and $Q$, which limit the rates of convergence of the $k$-NN estimators instead of the overlying dimension $d$. This 
allows the study of covariate distributions supported on known or unknown submanifolds of $\Rd$, without any modification to the 
estimators. Second, even when the covariate distributions have density with respect to the Lebesgue measure of $\R^d$, studying 
the volumes of balls instead of the densities works better in several cases, including Gaussian distributions, distributions with a 
density that goes to infinity near a finite point, or distributions supported on a non-regular support. These examples are addressed 
in Section \ref{ratio}.

\subsection{Rates for the local polynomial estimator}\label{polynomial}

In this section, we study the rates of convergence of the local polynomial estimator under milder assumptions than in Section 
\ref{simple}. Keeping the notation from Section \ref{poly_simple}, we recall the definition of the pointwise local polynomial 
estimator, defined for $Q$-almost all $z\in\X$ by
$$\hat h_L(z):=e_0^\top M(z)^{-1}\sum_{i\in I_k(z)}h(X_i,Y_i)\zeta(z,X_i)\text{ if }\tak\le r_0,\text{ else }\hat h_L(z):=0.$$
Recall that if we set $L=0$, then $\hat h_L$ coincides with the classical matching estimator, defined as $\hat h_0(z)=\meank h(X_i,Y_i)$ 
when $\tak\le r_0$. This means that we discuss the properties of the two estimators at the same time, $L=0$ being a particular case.
We introduce the following conditions, generalising the list from Section \ref{simple}.
\begin{enumerate}[label=(A\arabic*), wide=0.5em, leftmargin=*]
\item\label{cond:A1} There exist $\ell\in(0,L+1]$ and measurable functions $\mathcal G_\ell,\s^2:\X\to\R_+$ such that for $Q$-almost 
all $z\in\X$, the conditional variance $x\mapsto\Var(h(X,Y)\mid X=x)$ is bounded by $\s^2(z)$ on $B(z,r_0)\cap\X$, and 
the regression function $g$ is the restriction of a function having derivatives of order $\ceil\ell-1$ that are all
$(\ell+1-\ceil\ell)$-H\"older continuous on $B(z,3r_0)\cap\Conv(\X)$, with constant $\mathcal G_\ell(z)\ceil{\ell-1}!$.
\item\label{cond:A2} There exist $d_P,d_Q\in\Rs$ and measurable functions $p,\Rpq:\X\to\Rs,v:\X\times[0,r_0]\to[1,\infty[$, such that for 
$Q$-almost all $z\in\X$, all $r\in[0,r_0]$, and all Borel sets $\mathcal A\subset B(0,3)^{K^*}$, we have \begin{align*}
P(B(z,r)) & \ge p(z)r^{d_P}, \\ Q(B(z,2r)) & \le\Rpq(z)\,r^{d_Q-d_P}P(B(z,r)),\text{ and} \\ \Pb\left(\left(\frac{X_1-z}r,\dots,
\frac{X_{K^*}-z}r\right)\in\mathcal A\right) & \le(v(z,r)p(z)r^{d_P})^{K^*}\va{\mathcal A},
\end{align*}
where $\va{\mathcal A}$ designates the Lebesgue measure of the Borel set $\mathcal A\subset\R^{dK^*}$.
\item\label{cond:A3} There exist $\gres\in[0,d_Q/d_P],\,\gcov\in[0,(d_Q+2\ell)/d_P]$, and $\gbias\in[0,\ell/d_P]$, such that the following
integrals are finite: 
$$\Cvar:=\int_\X v(z)^{2DK^*}(\norm g_{\infty,B(z,r_0)\cap\X}^2+\s^2(z))\md Q(z)<\infty,$$
$$\Cres:=\int_\X\left[v(z)^{2DK^*+1-\gres}\s^2(z)\right]p(z)^{1-\gres}\Rpq(z)\md Q(z)<\infty,$$
$$\Ccov:=\int_\X\left[g(z)^2+v(z)^{2DK^*+1-\gcov}\mathcal G_\ell(z)^2\right]p(z)^{1-\gcov}\Rpq(z)\md Q(z)<\infty,\text{ and}$$ 
$$\Cbias:=\int_\X\left[\va{g(z)}+v(z)^{DK^*}\mathcal G_\ell(z)\right]p(z)^{-\gbias}\md Q(z)<\infty,$$
where we have set, for $Q$-almost all $z\in\X$, $v(z):=\underset{r\in[0,r_0]}\sup\frac{p(z)r^d}{P(B(z,r))}v(z,r)\le\underset{r\in[0,r_0]}\sup
v(z,r)$.
\end{enumerate}

\bigskip

Condition \ref{cond:A1} extends Conditions \ref{cond:X1} and \ref{cond:X1'} by assuming local bounds on the conditional variance, the 
regression function, and its derivatives. Asking $g$ to be the restriction of a regular function on the convex hull of $\X$ is a technical 
condition ensuring the validity of Taylor expansions around points of $\X$. 

Conditions \ref{cond:A2} and \ref{cond:A3} are the main subjects of Section \ref{ratio}. Defining the intrinsic dimension of a measured metric 
space by the volume of balls is classical in the literature \citep{kpotufe2021marginal}; \citep{pathak2022new}. Note that we assume that the
support $\X$ is a subset of an Euclidean space $\R^d$ instead of a general metric space only to have access to higher order Holder regularity 
and therefore Taylor expansions for the local polynomial estimators with $L\ge 1$.
Intuitively, the numbers $d_P$ and $d_Q$ in Condition \ref{cond:A2} represent the intrinsic dimensions of the distributions $P$ and $Q$,
respectively, and they limit the convergence rates of the estimators. When $P$ and $Q$ are absolutely continuous with respect to the Lebesgue 
measure of $\Rd$, we can take $d_P=d_Q=d$. In general, because Condition \ref{cond:A2} holds for $Q$-almost all $z$, we must have $d_Q\le d_P$, 
which can be expected in view of the nature of the covariate shift problem, where the measure $Q$ must be absolutely continuous with respect to 
$P$. However, we will see that having $d_Q=d_P$ is needed to achieve parametric rates of convergence. We stress that these dimensions 
need not be known to compute the estimators. 

Then the function $p$ could be up to some constant the density function of $P$, as in Condition \ref{cond:X2}, while the function $v$ 
represents the local multiplicative amplitude of $P$. We will see that the function $v$ often grows slowly compared to the vanishing of $p$, 
and therefore will not play a major role in the integrability conditions. Similarly, when $d_P=d_Q$, the function $\Rpq$ could be up to 
some constant the importance $\md Q/\md P$. Note that bounding directly the volume of balls $B(z,2r)$ under $Q$ by the volume of 
$B(z,r)$ under $P$ is more general, given the first part of Condition \ref{cond:A2}, than bounding $Q(B(z,2r))$ by $q(z)(2r)^{d_Q}$ 
for some measurable function $q$, as we did in Condition \ref{cond:X2}, and it allows for more situations as we shall discuss in Section 
\ref{ratio}. This formulation was also used in the context of transfer exponents; see \citet{kpotufe2021marginal}, Example $4$.

Finally, Condition \ref{cond:A3} features integrability conditions that control the transferability between the distributions $P$ and $Q$. 
The constant $\Cvar$ simply controls the growths of the functions $g,\,\s^2$ and $v$. The next two integrability conditions differ only
through the functions $\s^2$, $g$, and $\mathcal G_\ell$, and they are therefore equivalent when these functions are bounded away from 
zero and infinity. Since the function $p$ is bounded above by $r_0^{-d_P}$, the last three integrability conditions in \ref{cond:A3} 
become stronger as $\gres,\,\gcov$ and $\gbias$ increase, which also improves the convergence rates; see Theorems \ref{variance} and 
\ref{bias}. This can be related to the comments on the range of the transfer function in \citet{zamolodtchikov2026minimax}.
The minimum exponents required to achieve parametric rates are $\gres=\gcov=1$ and $\gbias=1/2$. With these parameters, setting the functions
$\Delta_1(z):=g(z)^2+v(z)^{2DK^*}(\mathcal G_\ell(z)^2+\s^2(z))$ and $\Delta_2(z):=\va{g(z)}+v(z)^{DK^*}\mathcal G_\ell(z)$, the integrability
conditions require that $$\int_\X\Delta_1(z)\,\Rpq(z)\md Q(z)<\infty,\text{ and }\int_\X\Delta_2(z)\,\frac{\md Q(z)}{p(z)^{1/2}}<\infty.$$ 
They are similar to Condition \ref{cond:X3}, but they also incorporate the potential growths of the regression function $g$ and the
conditional variance $\s^2$, in the case where they are not globally bounded, or when they both approach zero.

\bigskip

Note that in the case where the two distributions have the same intrinsic dimension, that is, $d_Q=d_P$, when the integrability conditions 
are satisfied for $\gres=1$ and some $\gcov\ge 1$, and when we choose the classical pointwise matching estimator with $L=0$, for 
which $D=0$, then the multiplicative amplitude $v$ does not appear in the integrability conditions, so that the related inequality 
in Condition \ref{cond:A3} can be omitted.

\bigskip

We now analyse the rates of convergence of the estimator $\hat e_L(h)$, starting with its conditional variance.

\begin{thrm}[Conditional Variance]\label{variance}
Suppose that we have Conditions \ref{cond:A1} to \ref{cond:A3}, and that we set the parameter $k\ge(2D+1)K^*$. Then there exists a constant
$C\in\Rs$, depending only on $d$ and $L$, such that the conditional variance $V_{m,n}:=\hat e_L(h)-\E[\hat e_L(h)\mid(X_i)_{i\in\ieu}]$ of
the estimator $\hat e_L(h)$ satisfies $$\E[V_{m,n}^2]\le C\left[\frac\Cvar m+\frac{\Cres}{k^{1-\gres}\,(n+1)^{\gres}}\right],$$ 
with the constants $\Cvar,\Cres$, and the exponent $\gres$ from Condition \ref{cond:A3}.
\end{thrm}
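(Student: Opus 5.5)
Write $\hat h_L(z)=\sum_{i=1}^n w_i(z)h(X_i,Y_i)$, where
$$w_i(z)=\1{i\in I_k(z),\,\tak\le r_0}\,e_0^\top M(z)^{-1}\zeta(z,X_i).$$
Condition on $\mathcal X_n:=(X_i)_{i\in\ieu}$. The target sample and the labels are conditionally independent, so $V_{m,n}$ splits into two orthogonal pieces:
$$V_{m,n}=\Big(\meanm \bar h(X_j^*)-\E[\bar h(X^*)\mid\mathcal X_n]\Big)+\sum_{i=1}^n\bar w_i\,\big(h(X_i,Y_i)-g(X_i)\big),$$
with $\bar h(z):=\sum_i w_i(z)g(X_i)$ and $\bar w_i:=\meanm w_i(X_j^*)$. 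The cross term has zero conditional mean. The first piece has second moment at most $\frac1m\E[\bar h(X^*)^2]$. The second, since the labels are conditionally independent given $\mathcal X_n$, equals $\E\big[\sum_i\bar w_i^2\Var(h(X_i,Y_i)\mid X_i)\big]$.

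\textbf{Step 1 (the $1/m$ term).} On $\{\tak\le r_0\}$ all $X_i$ with $i\in I_k(z)$ lie in $B(z,r_0)$. Hence
$$\bar h(z)^2\le\norm g_{\infty,B(z,r_0)\cap\X}^2\Big(\sum_i\va{w_i(z)}\Big)^2 .$$
I then need
$$\E\Big[\Big(\sum_i\va{w_i(z)}\Big)^2\Big]\le C\,v(z)^{2DK^*}.$$
This is the key lemma, following \citet{holzmann2026multivariate}. Rescale by $\tak$, so that $M(z)$ becomes a Gram matrix of monomials in points of $B(0,1)$. Bound $\norm{M^{-1}}$ by a product of inverse eigenvalues. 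Control the probability that the smallest eigenvalue is small by splitting the $k$ neighbours into $2D+1$ disjoint groups of $K^*$ points; this is where $k\ge(2D+1)K^*$ is used. For a single group, a small determinant means the tuple lies in a set $\mathcal A\subset B(0,3)^{K^*}$ of small Lebesgue measure, because the determinant is a polynomial of degree $D$. The third inequality of \ref{cond:A2} converts this to probability, with factor $(v(z,r)p(z)r^{d_P})^{K^*}$. The conditional law of the neighbours given $\tak$ brings a normalisation by $P(B(z,r))^{K^*}$, which produces the ratio defining $v(z)$. Combined with the bound on $\s^2$ on $B(z,r_0)$, this gives the $\Cvar/m$ term after integrating over $Q$.

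\textbf{Step 2 (the $n$-dependent term).} Using $\Var(h\mid X_i)\le\s^2(z)$ for $X_i\in B(z,r_0)$ and Jensen over $j$,
$$\E\Big[\sum_i\bar w_i^2\s^2\Big]\le\E\Big[\sum_i w_i(X^*_1)w_i(X^*_2)\,\cdot\Big]+\frac1m\E\Big[\sum_i w_i(X^*)^2\,\cdot\Big].$$
The second part is again absorbed into $\Cvar/m$, since $\sum_iw_i^2\le(\sum_i\va{w_i})^2$. The main term is the two-point one. It is nonzero only if $z=X_1^*$ and $y=X_2^*$ share a neighbour $X_i$, which forces $\norm{z-y}\le\tak+\hat\tau_{k+1}(y)$. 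I bound
$$\va{w_i(z)w_i(y)}\le\tfrac12\big(w_i(z)^2+w_i(y)^2\big).$$
By symmetry it suffices to control
$$\int\E\Big[\sum_i w_i(z)^2\,\s^2(z)\,Q\big(B(z,2\tak)\big)\Big]\md Q(z),$$
after handling the case where $y$'s radius dominates symmetrically (the ball around the larger radius contains the other point). Then apply \ref{cond:A2}:
$$Q(B(z,2r))\le\Rpq(z)\,r^{d_Q-d_P}P(B(z,r)).$$
\textbf{Step 3.} Use $P(B(z,\tak))\sim k/n$ in the sense that $U=P(B(z,\tak))$ is Beta$(k+1,n-k)$-distributed, together with $r^{d_Q-d_P}\le (U/p(z))^{(d_Q-d_P)/d_P}$ from the first inequality of \ref{cond:A2} (note $d_Q\le d_P$). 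Interpolate with the trivial bound $Q(\cdot)\le1$, writing
$$\min(1,a)\le a^{\gres}.$$
This yields a factor of order $\E[U^{\gres}]\,p(z)^{-\gres}\cdot p(z)$-type terms, with $\E[U^{\gres}]\lesssim(k/(n+1))^{\gres}$. The weights satisfy $\sum_iw_i^2\lesssim v^{2DK^*}/k$ by the Step 1 lemma, which gives the factor $k^{-1}$, so overall $k^{\gres-1}(n+1)^{-\gres}$.

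\textbf{Main obstacle.} The delicate point is making the exponents of $v(z)$ and $p(z)$ match exactly those in $\Cres$, namely $v^{2DK^*+1-\gres}p^{1-\gres}\Rpq$. This requires doing the eigenvalue lemma jointly with the Beta moment, rather than using Cauchy–Schwarz, so that the conditional independence of the neighbour positions given $\tak$ is exploited. I expect to state the lemma as a conditional bound given $\tak=r$ and integrate it against the Beta density at the end.
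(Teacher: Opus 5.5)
Your decomposition (target-sample fluctuation of the noise-free $\bar h$, plus label noise weighted by $\bar w_i$) is a valid alternative to the paper's split $F_{m,n}+E_n$. Step~1 and the $\frac1m$ part of Step~2 are fine. The $n$-dependent term, however, has two genuine gaps.

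\textbf{Step 3.} The inequality $r^{d_Q-d_P}\le (U/p(z))^{(d_Q-d_P)/d_P}$ is backwards whenever $d_Q<d_P$. The first part of \ref{cond:A2} gives $r^{d_P}\le U/p(z)$, and raising this to the non-positive power $(d_Q-d_P)/d_P$ reverses it. That lower volume bound only controls positive powers of $\tak$, and then only through negative powers of $p$. It can never produce the factors $p(z)^{1-\gres}v(z)^{1-\gres}$ in $\Cres$.

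The interpolation $\min(1,a)\le a^{\gres}$ with $a=\Rpq(z)\tak^{d_Q-d_P}U$ does not help either. It leaves $\Rpq(z)^{\gres}$ instead of $\Rpq(z)$, which is a different integrability constant.

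The missing ingredient is the \emph{upper} volume bound $P(B(z,r))\lesssim p(z)v(z)r^{d_P}$. This comes from the third part of \ref{cond:A2} with $\mathcal A=B(0,1)^{K^*}$, and is what the paper's Lemma~\ref{NN_radius}~(2) uses. On $\{\tak\le r_0\}$ it gives
\[
\tak^{d_Q-d_P}\,U=\tak^{d_Q-\gres d_P}\,\tak^{-(1-\gres)d_P}\,U\lesssim r_0^{d_Q-\gres d_P}\,\big(p(z)v(z)\big)^{1-\gres}\,U^{\gres}.
\]
The bound $\gres\le d_Q/d_P$ handles the first factor. Then Jensen ($\gres\le 1$) gives $\E[U^{\gres}]\le((k+1)/(n+1))^{\gres}$. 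Combined with the key lemma's $C v(z)^{2DK^*}/k$, this yields exactly the integrand $v^{2DK^*+1-\gres}p^{1-\gres}\Rpq$ and the rate $k^{\gres-1}(n+1)^{-\gres}$.

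So no joint treatment of the eigenvalue lemma and the Beta moment is needed. The key lemma bounds $\E[e_0^\top M(z)^{-1}e_0\mid\tak]$ uniformly on $\{\tak\le r_0\}$, and $Q(B(z,2\tak))$ is $\tak$-measurable, so the two factor immediately. The extra $v^{1-\gres}$ comes from the volume bound, not from the Gram matrix.

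\textbf{Step 2, the case $\hat\tau_{k+1}(y)>\tak$.} This case is not symmetric to the other one. After $\va{w_i(z)w_i(y)}\le\frac12(w_i(z)^2+w_i(y)^2)$ and symmetrisation, the weight $w_i(z)^2$ sits at the point with the \emph{smaller} radius. The constraint, however, is $z\in B(y,2\hat\tau_{k+1}(y))$, so integrating in $y$ does not produce $Q(B(z,2\tak))$. Relabelling $y\leftrightarrow z$ only moves the weight back to the smaller-radius point.

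For $L=0$ the weights are deterministic (equal to $1/k$) and your symmetry argument works; for $L\ge 1$ it does not. You need to condition on $\hat\tau_{k+1}(y)$ and prove a covariance version of the key lemma:
\[
\E\big[e_0^\top M(z)^{-1}e_0\,\1{\tak\le\hat\tau_{k+1}(y)}\mid\hat\tau_{k+1}(y)\big]\le\frac Ck\,v(y)^{2DK^*}
\]
on the event $\{\hat\tau_{k+1}(y)\le r_0,\ z\in B(y,2\hat\tau_{k+1}(y))\}$. On that event the neighbours of $z$ lie in $B(y,3\hat\tau_{k+1}(y))$. This is precisely why the third part of \ref{cond:A2} is stated for $\mathcal A\subset B(0,3)^{K^*}$. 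With this lemma, the term reduces to $\E[Q(B(y,2\hat\tau_{k+1}(y)))\1{\hat\tau_{k+1}(y)\le r_0}]$ and is handled exactly as above.
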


Theorem \ref{variance} shows that the conditional variance of the estimator $\hat e_L(h)$ achieves the parametric rate of convergence 
$m^{-1}+n^{-1}$ as soon as we can take $\gres=1$ in Condition \ref{cond:A3}. It first requires $d_Q\ge d_P$, and therefore $d_Q=d_P$, since
the inequality $d_Q\le d_P$ is always implicit, as mentioned above. Second, taking $\gres=1$ requires the integrability condition
$$\int_\X v(z)^{2DK^*}\s^2(z)\,\Rpq(z)\md Q(z)<\infty,$$ which is similar to the condition $\int_\X\frac{f_Q(z)^2}{f_P(z)}\md z<\infty$ 
from Section \ref{simple}, when the multiplicative amplitude and the conditional variance $\s^2$ are globally bounded. 
We now analyse the conditional bias of the estimator $\hat e_L(h)$.

\begin{thrm}[Conditional Bias]\label{bias}
Suppose that we have Conditions \ref{cond:A1} to \ref{cond:A3}, and that we set the parameter $k\ge(2D+1)K^*$. Then there exists a constant
$C\in\Rs$, depending only on $d$ and $L$, such that the conditional bias $B_n:=\E[\hat e_L(h)\mid(X_i)_{i\in\ieu}]-e(h)$ of the estimator 
$\hat e_L(h)$ satisfies $$\E[B_n^2]\le C\left[\Ccov\left(\frac{k+1}{n+1}\right)^{\gcov}+\Cbias^2\left(\frac{k+1}{n+1}\right)^{2\gbias}
\right],$$ with the constants $\Ccov,\Cbias$, and the exponents $\gcov,\gbias$ from Condition \ref{cond:A3}.
\end{thrm}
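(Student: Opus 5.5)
Conditionally on the source covariates $(X_i)_{i\in\ieu}$, the estimator is an average over the independent target sample, so
$$\E[\hat e_L(h)\mid(X_i)]=\int_\X\E[\hat h_L(z)\mid(X_i)]\md Q(z).$$
Hence $B_n=\int_\X b_n(z)\md Q(z)$, with pointwise conditional bias
$$b_n(z):=\1{\tak\le r_0}\,e_0^\top M(z)^{-1}\sum_{i\in I_k(z)}\big(g(X_i)-g(z)\big)\zeta(z,X_i)\;-\;g(z)\1{\tak>r_0}.$$
This identity uses the reproducing property $e_0^\top M(z)^{-1}\sum_{i\in I_k(z)}\zeta(z,X_i)\zeta(z,X_i)^\top b=b_0$ for polynomials. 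By the same property, Taylor-expanding $g$ to order $\ceil\ell-1$ around $z$ on $B(z,r_0)\subset B(z,3r_0)\cap\Conv(\X)$ kills the polynomial part. Condition \ref{cond:A1} then gives, on $\{\tak\le r_0\}$,
$$\va{b_n(z)}\le C\,\mathcal G_\ell(z)\,\tak^\ell\,\big\|e_0^\top M(z)^{-1}\textstyle\sum_{i\in I_k(z)}\zeta(z,X_i)\big\|_{\text{rescaled}}.$$
After rescaling the monomials by $\tak$, this becomes $\va{b_n(z)}\le C\mathcal G_\ell(z)\tak^\ell\,\Lambda(z)$, where $\Lambda(z)$ involves the inverse of the rescaled Gram matrix $\tilde M(z)$, with $\Lambda\equiv1$ when $L=0$.

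Next, write $B_n^2\le 2(\int b_n\md Q)^2$ and split the square of the integral as a double integral:
$$\E[B_n^2]\le\iint\E[\va{b_n(z)}\va{b_n(y)}]\md Q(z)\md Q(y).$$
Separate pairs $(z,y)$ that are close, $\norm{z-y}\le 2(\tak\vee\hat\tau_{k+1}(y))$, from pairs that are far.

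\textbf{Far pairs.} The neighbourhoods are essentially disjoint, so the two biases are nearly independent. Bound the contribution by Cauchy–Schwarz on each factor, which gives $(\int\E\va{b_n(z)}\md Q)^2$. It remains to control
$$\E\va{b_n(z)}\le C\big(\mathcal G_\ell(z)v(z)^{DK^*}\E[\tak^\ell]+\va{g(z)}\Pb(\tak>r_0)\big).$$
Since $\Pb(\tak>r)$ is a binomial tail, the first part of \ref{cond:A2} gives $P(B(z,r))\ge p(z)r^{d_P}$. Hence
$$\E[\tak^\ell\wedge r_0^\ell]\lesssim\Big(\frac{k+1}{(n+1)p(z)}\Big)^{\ell/d_P},\qquad \Pb(\tak>r_0)\lesssim\Big(\frac{k+1}{(n+1)p(z)}\Big)^{s}\text{ for any }s.$$
Interpolating with the trivial bound $1$ yields the factor $((k+1)/(n+1))^{\gbias}p(z)^{-\gbias}$ for any $\gbias\le\ell/d_P$. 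Integrating against $Q$ produces $\Cbias((k+1)/(n+1))^{\gbias}$, which is the second term after squaring.

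\textbf{Close pairs.} Bound $\va{b_n(z)b_n(y)}\le\frac12(b_n(z)^2+b_n(y)^2)$ and integrate $y$ over $B(z,2r)$ at the current NN scale $r$. This gives
$$\int\E\big[b_n(z)^2\,Q(B(z,2\tak))\big]\md Q(z),$$
plus the symmetric term. The second inequality of \ref{cond:A2} replaces $Q(B(z,2\tak))$ by $\Rpq(z)\tak^{d_Q-d_P}P(B(z,\tak))$, and $P(B(z,\tak))$ concentrates at order $(k+1)/(n+1)$ (a Beta variable). Combined with $b_n(z)^2\lesssim\mathcal G_\ell(z)^2v(z)^{2DK^*}\tak^{2\ell}+g(z)^2\1{\tak>r_0}$, the moments become
$$\E\big[\tak^{2\ell+d_Q-d_P}P(B(z,\tak))\big]\lesssim\Big(\frac{k+1}{n+1}\Big)^{\gcov}p(z)^{-\gcov}\cdot p(z)\,v(z)^{1-\gcov}$$
for $\gcov\le(d_Q+2\ell)/d_P$, again by interpolation using $\tak\le r_0$. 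The precise bookkeeping of the $v$ powers comes from the definition of $v(z)$ relating $P(B(z,r))$ to $p(z)r^{d}$. Integrating gives $\Ccov((k+1)/(n+1))^{\gcov}$.

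\textbf{Main obstacle.} The hardest step is controlling the inverse Gram matrix: one needs moments like $\E[\Lambda(z)^2\cdots]\lesssim v(z)^{2DK^*}$ uniformly. Following \citet{holzmann2026multivariate}, I would write $\det\tilde M(z)$ as a sum over $K^*$-subsets of squared Vandermonde-type determinants and bound $\Lambda\le\operatorname{adj}/\det$ with $\operatorname{adj}$ bounded on $B(0,1)$. Small values of the determinant are then controlled through the third inequality of \ref{cond:A2}: the probability that $K^*$ rescaled points fall in the sublevel set $\{\det<\e\}\cap B(0,3)^{K^*}$ is at most $(v p r^{d_P})^{K^*}$ times its Lebesgue measure, which is polynomial in $\e$. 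The degree $D$ of the determinant, together with the requirement $k\ge(2D+1)K^*$ (giving about $2D+1$ disjoint subsets), makes the negative moments finite. A secondary difficulty is making the far-pair decorrelation rigorous. There I would condition on the $(k+1)$-NN radii and use that, given the radii, disjoint balls contain independent subsamples, absorbing the dependence into the binomial tail bounds.
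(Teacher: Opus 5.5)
Your reduction to $B_n=\int_\X b_n\md Q$, the Taylor/inverse-Gram bound on $\{\tak\le r_0\}$, and your treatment of the inverse Gram matrix (the paper's Lemma \ref{key}) agree with the paper. The pair decomposition, however, does not go through as written.

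\textbf{Gaps.} In the close pairs, the ``symmetric term'' is not a copy of the first one. After $\va{b_n(y)b_n(z)}\le\frac12(b_n(y)^2+b_n(z)^2)$ you must also handle $b_n(z)^2$ on $\{\tak<\hat\tau_{k+1}(y),\ z\in B(y,2\hat\tau_{k+1}(y))\}$. Integrating $z$ out there requires a bound on $\E[k\,e_0^\top M(z)^{-1}e_0\,\1{\tak\le\hat\tau_{k+1}(y)}\mid\hat\tau_{k+1}(y)]$ that is uniform in such $z$. This is the paper's Lemma \ref{key_cov}. It is also why \ref{cond:A2} is stated for $\mathcal A\subset B(0,3)^{K^*}$ and \ref{cond:A1} on $B(z,3r_0)$: that way $v(y)$ and $\mathcal G_\ell(y)$ control the Gram matrix and the Taylor remainder at $z$.

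More seriously, for the censored part $g(z)\1{\tak>r_0}$ you invoke the second inequality of \ref{cond:A2} at radius $\tak>r_0$, which is outside its range. On $\{\tak>r_0\}$ nothing makes $Q(B(z,2\tak))$ small, so your close set $\norm{y-z}\le 2(\tak\vee\hat\tau_{k+1}(y))$ leaves essentially $\int_\X g^2\,\Pb(\tak>r_0)\md Q$. Condition \ref{cond:A3} does not bring this down to $((k+1)/(n+1))^{\gcov}$; that would need $\int_\X g^2p^{-\gcov}\md Q<\infty$. The paper instead separates $\int_\X g\,\1{\tak>r_0}\md Q$ and splits at $\norm{y-z}=2r_0$: $\fbox{3}$ is handled by negative correlation, and $\fbox{4}$ by \ref{cond:A2} at $r=r_0$.

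Finally, in the far pairs Cauchy--Schwarz yields $\E[b_n(y)^2]^{1/2}\E[b_n(z)^2]^{1/2}$, not $\E\va{b_n(y)}\,\E\va{b_n(z)}$. The paper obtains a genuine factorisation only through Lemma \ref{cond_ind} together with the negative-correlation Lemma \ref{negative_correlation}. ``Absorbing the dependence into binomial tail bounds'' does not replace these.

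\textbf{Repair.} The cleanest fix is to take your Cauchy--Schwarz step literally. It never uses that the pair is far, so applied to all pairs it is Minkowski's integral inequality: $\E[B_n^2]^{1/2}\le\int_\X\E[b_n(z)^2]^{1/2}\md Q(z)$. You have $b_n(z)^2\le C\,\mathcal G_\ell(z)^2\tak^{2\ell}\,k\,e_0^\top M(z)^{-1}e_0\,\1{\tak\le r_0}+g(z)^2\1{\tak>r_0}$. Combine this with Lemma \ref{key}, the interpolation $\tak^{2\ell}\le r_0^{2\ell-2\gbias d_P}\tak^{2\gbias d_P}$ on $\{\tak\le r_0\}$, and Lemma \ref{NN_radius} (1) and (3). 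This gives $\E[b_n(z)^2]^{1/2}\le C(\va{g(z)}+v(z)^{DK^*}\mathcal G_\ell(z))\,p(z)^{-\gbias}((k+1)/(n+1))^{\gbias}$, hence $\E[B_n^2]\le C\,\Cbias^2((k+1)/(n+1))^{2\gbias}$, which implies the statement.

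This route uses none of Lemmas \ref{cond_ind}, \ref{negative_correlation} or \ref{key_cov}, and it shows the $\Ccov$ term is not needed for this theorem. The paper's finer near/far decomposition pays off in Theorem \ref{better_bias}. There, for $L=0$, the conditional mean $\E[b_n(z)\mid\tak]$ on $\{\tak\le\delta(z)\}$ is of order $\tak^\ell$ thanks to the cancellation in \ref{cond:A4}, whereas $\E[b_n(z)^2\mid\tak]$ is only of order $\tak^2$.
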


Theorem \ref{bias} shows that taking $k=1$ can be optimal to minimise the upper bound on the rate of the conditional bias. The inequality
$\gbias\le\ell/d_P$ in Condition \ref{cond:A3} translates the effects of the regularity of the regression function and the intrinsic 
dimension of $P$ on the bias of the $k$-NN estimator. It means that the rate for the second-order conditional bias is always controlled 
by $n^{-2\ell/d_P}$, which is a standard non-parametric rate, no matter the singularity between $P$ and $Q$. However, the bias term can 
be larger than $n^{-2\ell/d_P}$ when the fourth part of Condition \ref{cond:A3} is not satisfied for $\gbias=\ell/d_P$. Setting $k=1$, 
the conditional bias of the estimator $\hat e_L(h)$ is controlled by the parametric rate of convergence $n^{-1}$ as soon as we can take 
$\gbias\ge 1/2$ and $\gcov\ge 1$. First, it requires $d_P\le 2\ell$ and $d_P\le d_Q+2\ell$, where the second inequality is automatic when 
$d_P=d_Q$. Second, it requires the two integrability conditions $$\int_\X(g(z)^2+v(z)^{2DK^*}\mathcal G_\ell(z)^2)\,\Rpq(z)\md Q(z)<\infty,
\text{ and }\int_\X(\va{g(z)}+v(z)^{DK^*}\mathcal G_\ell(z))\,\frac{\mathrm dQ(z)}{p(z)^{1/2}}<\infty.$$ 
They are similar to Condition \ref{cond:X3}, but they also incorporate the growths of $g$ and its local H\"older constant.

Combining the two previous results together, we can conclude when the estimator $\hat e_L(h)$ for the expectation $e(h)$ achieves 
the parametric rate of convergence.

\begin{coro}[Mean-squared error]\label{mse}
Suppose that we have Conditions \ref{cond:A1} to \ref{cond:A3}, with the parameters $d_Q=d_P\le 2\ell$, $\gres=1,\,\gcov\ge 1$, and
$\gbias\ge 1/2$. Suppose further that $k\ge(2D+1)K^*$, $k^{\gcov}\le\alpha n^{\gcov-1}$, and $k^{2\gbias}\le\alpha n^{2\gbias-1}$,
for some constant $\alpha\in\Rs$. Then there exists a constant $C\in\Rs$ such that the mean-squared error of the estimator $\hat e_L(h)$
satisfies $$\E[(\hat e_L(h)-e(h))^2]\le C(m^{-1}+n^{-1}).$$ 
\end{coro}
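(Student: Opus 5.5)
We decompose the error as $\hat e_L(h)-e(h)=V_{m,n}+B_n$, where $V_{m,n}$ is the conditional variance term of Theorem \ref{variance} and $B_n$ the conditional bias of Theorem \ref{bias}. The elementary inequality $(a+b)^2\le 2a^2+2b^2$ gives
$$\E[(\hat e_L(h)-e(h))^2]\le 2\E[V_{m,n}^2]+2\E[B_n^2].$$
The cross term need not be computed, though it vanishes anyway since $\E[V_{m,n}\mid(X_i)_{i\in\ieu}]=0$. It therefore suffices to bound the two terms separately by $C(m^{-1}+n^{-1})$.

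For the variance, we apply Theorem \ref{variance} with $\gres=1$. The factor $k^{1-\gres}$ becomes $1$, so
$$\E[V_{m,n}^2]\le C\left(\frac{\Cvar}m+\frac{\Cres}{n+1}\right)\le C'(m^{-1}+n^{-1}).$$
This is allowed because the hypothesis $d_Q=d_P$ makes $\gres=1\in[0,d_Q/d_P]$ admissible, and $\Cvar,\Cres<\infty$ by Condition \ref{cond:A3}.

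For the bias, we apply Theorem \ref{bias}. Since $k\ge 1$, we have $(k+1)/(n+1)\le 2k/n$. Then
$$\left(\frac{k+1}{n+1}\right)^{\gcov}\le 2^{\gcov}\frac{k^{\gcov}}{n^{\gcov}}\le 2^{\gcov}\alpha\, n^{-1},$$
using the hypothesis $k^{\gcov}\le\alpha n^{\gcov-1}$. In the same way, $\left(\frac{k+1}{n+1}\right)^{2\gbias}\le 2^{2\gbias}\alpha\, n^{-1}$ by $k^{2\gbias}\le\alpha n^{2\gbias-1}$. These exponents are legitimate. The condition $d_P\le 2\ell$ gives $\ell/d_P\ge 1/2$, so $\gbias\ge1/2$ lies in $[0,\ell/d_P]$. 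Also $(d_Q+2\ell)/d_P\ge 1$ when $d_Q=d_P$, so $\gcov\ge 1$ is compatible with the range imposed in Condition \ref{cond:A3}. Hence $\E[B_n^2]\le C(\Ccov 2^{\gcov}\alpha+\Cbias^2 2^{2\gbias}\alpha)n^{-1}$.

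Adding the two bounds gives the result, with a constant depending on $d$, $L$, $\alpha$, $\gcov$, $\gbias$, and the four constants from Condition \ref{cond:A3}. There is no real obstacle here. The only points requiring care are checking that the chosen exponents lie in the admissible ranges of Condition \ref{cond:A3}, which is where $d_Q=d_P\le2\ell$ is used, and passing from $(k+1)/(n+1)$ to $k/n$.
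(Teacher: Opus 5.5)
Your proof is correct and follows the paper's route: the paper obtains Corollary \ref{mse} simply by combining Theorems \ref{variance} and \ref{bias}, which is exactly your decomposition $\hat e_L(h)-e(h)=V_{m,n}+B_n$ with $\gres=1$ and $(k+1)/(n+1)\le 2k/n$ under the growth constraints on $k$. Your checks that the exponents lie in the admissible ranges (using $d_Q=d_P\le 2\ell$) are the only details the paper leaves implicit.
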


Regarding the classical matching estimator, taking a small parameter $k$ is optimal for the mean-squared error. Note that the parameter $L$ 
need not be chosen according to the regularity or the sample sizes, since to achieve parametric rates of convergence, we simply 
need $L+1\ge d_P/2$, and choosing a higher $L$ than necessary only impacts the constants in the convergence rates. Therefore, it is possible
to always set $L:=\floor{d/2}$, and the conditional bias of the estimator $\hat e_L(h)$ will automatically be negligible compared to its
conditional variance with enough regularity on the regression function and enough strong integrability conditions.

Although the theoretical results require $k\ge(2D+1)K^*$, which grows rapidly with the dimension $d$, practice shows that a
smaller parameter $k$ seems to suffice, as \citet{holzmann2026multivariate} mentioned. However, we must have $k\ge K^*$ for the local
polynomial estimator to be well-defined, otherwise the matrix $M(z)$ cannot be invertible. Because the quantity $K^*(d,\floor{d/2})$ grows
exponentially with the dimension $d$, the rates of convergence still deteriorate through constants as the dimension $d$ increases. 
If the set $\X$ is a known $d_0$-dimensional submanifold of $\R^d$, we may restrict the least-square minimisation for $\hat G_L(z)$ to the
tangent space of $\X$ at the point $z$. Doing this reduces the constants $K^*(d,L)$ and $D(d,L)$ to $K^*(d_0,L)$ and $D(d_0,L)$,
respectively, without changing the rates of convergence. Adaptive estimators that eliminate the impact of the overlying dimension $d$ on
the constants in the convergence rates of the estimators are left to future research.

\subsection{Faster rates for the matching estimator}\label{faster}

In the previous section, we showed that the locally linear estimator $\hat e_1(h)$ was well suited when the regularity parameter 
$\ell$ of the regression function was between $1$ and $2$. In this section, we prove that in this case, the same rates of convergence 
can also be achieved by the simpler matching estimator $\hat e_0(h)$, under some additional conditions. This improvement in the rates of the
conditional bias of the matching estimator was the main subject of \citet{viel2025convergence}. As previously, their results can be naturally
extended to this paper's framework, without assuming that the source distribution has a density with respect to the Lebesgue measure that is
bounded away from zero on its support. 

We assume in Condition \ref{cond:A1} that the function $g$ has first-order derivatives 
that are locally $(\ell-1)$-H\"older continuous, where $\ell\in(1,2]$. We consider two additional conditions.
\begin{enumerate}[label=(A4), wide=0.5em, leftmargin=*]
\item\label{cond:A4} There exist measurable functions $\delta:\X\to[0,r_0]$ and $\Lambda_P:\X\to\R_+$ such that for $Q$-almost all $z\in
\X$ and all $r\in[0,\delta(z)]$, setting the signed measure $P_z:=P(z+\cdot)-P(z-\cdot)$, we have $$\va{P_z}(B(0,r))\le 2\Lambda_P(z)\,
P(B(z,r))\,r^{\ell-1}.$$ Besides, the following integral is finite:
$$\Cbias':=\int_\X\norm{\nabla g(z)}\Lambda_P(z)\,p(z)^{-\gbias}\md Q(z)<\infty.$$
\end{enumerate}
\begin{enumerate}[label=(A5), wide=0.5em, leftmargin=*]
\item\label{cond:A5} The following supremum is finite: $$\Cbound:=\underset{L\in\Rs}\sup L^{\gbias-1/d_P}\int_{\delta<r_0}\norm{\nabla g(z)}
\,p(z)^{-1/d_P}\exp(-Lp(z)\delta(z)^{d_P})\md Q(z)<\infty.$$
\end{enumerate}

Condition \ref{cond:A4} translates into a local log-H\"older regularity on the source distribution $P$. Let $\X^c$ denote the complement of the 
set $\X$ in $\Rd$, and let $\delta(z,\X^c):=\inf\{\norm{z-y}\mid y\in\X^c\}$ denote the distance between the point $z\in\Rd$ and the set $\X^c$. 
If $P$ has density with respect to the Lebesgue measure on $\Rd$, then we can set $\delta(z):=\delta(z,\X^c)$ and
$$\Lambda_P(z):=\underset{y\in B(z,r_0)\cap\X}\sup\,\frac{\va{f_P(y)-f_P(z)}}{f_P(y)\norm{y-z}^{\ell-1}}.$$ 
Meanwhile, Condition \ref{cond:A5} is another integrability condition that incorporates boundary effects. These two conditions are also discussed
in Section \ref{ratio}.
These two conditions extend the framework of \citet{viel2025convergence}, where the regression function $g$ was supposed to have continuous
second-order derivatives on a compact set, which implies that its gradient was Lipschitz continuous, corresponding to the case where $\ell=2$ 
here. In addition, the source density function $f_P$ was supposed to be Lipschitz continuous on its support, and because it was also supposed to 
be bounded away from zero, it was automatically log-Lipschitz continuous. Finally, when the function $\norm{\nabla g}$ is bounded and $p$ is
bounded away from zero, the finiteness of the quantity $\Cbound$ in Condition \ref{cond:A5}, with the maximal exponent $\gbias=2/d_P$, implies 
the geometric condition $(A)$ in \citet{viel2025convergence}.

\begin{thrm}[Conditional Bias]\label{better_bias}
Suppose that we have Conditions \ref{cond:A1} to \ref{cond:A5} with the parameter $\ell\in(1,2]$, and with the exponents $\gcov\in[0,(d_Q+2)/d_P]$ 
and $\gbias\in[1/d_P,\ell/d_P]$. Then there exists a universal constant $C\in\Rs$ such that the conditional bias $B_n:=\E[\hat e_0(h)\mid
(X_i)_{i\in\ieu}]-e(h)$ of the estimator $\hat e_0(h)$ satisfies
$$\E[B_n^2]\le\Ccov\left(\frac{k+1}{n+1}\right)^{\gcov}+(\Cbias^2+\Cbias^{'2}+\Cbound^2)\left(\frac{k+1}{n+1}\right)^{2\gbias},$$ 
with the constants $\Ccov$ and $\Cbias$ from Condition \ref{cond:A3}, $\Cbias'$ from Condition \ref{cond:A4}, and $\Cbound$ from Condition
\ref{cond:A5}.
\end{thrm}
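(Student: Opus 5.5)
The plan is to decompose the conditional bias of the matching estimator along the Taylor expansion of $g$ at order one. For $Q$-almost all $z$, write $\hat\tau:=\tak$ and use the convention that the estimator vanishes when $\hat\tau>r_0$. Then
$$\E[\hat h_0(z)\mid(X_i)_{i\in\ieu}]-g(z)=-g(z)\1{\hat\tau>r_0}+\1{\hat\tau\le r_0}\meank\big(g(X_i)-g(z)\big).$$
Integrating against $Q$ splits $B_n$ into three terms.
\begin{itemize}
\item The censoring term $-\int g\,\1{\hat\tau>r_0}\md Q$.
\item The remainder term $\int\1{\hat\tau\le r_0}\meank R(z,X_i)\md Q(z)$, where $R(z,x):=g(x)-g(z)-\nabla g(z)^\top(x-z)$. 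By Condition \ref{cond:A1}, $\va{R(z,x)}\le\mathcal G_\ell(z)\norm{x-z}^\ell$.
\item The first-order term $\int\nabla g(z)^\top\1{\hat\tau\le r_0}\meank(X_i-z)\md Q(z)$.
\end{itemize}

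The first two terms are handled exactly as in the proof of Theorem \ref{bias} with $L=0$ (so $D=0$, $K^*=1$). I will write each of them as a difference between a centred part and its mean. The centred part is controlled through a second-moment computation that produces $\Ccov((k+1)/(n+1))^{\gcov}$, using $P(B(z,r))\ge p(z)r^{d_P}$, the ratio bound $Q(B(z,2r))\le\Rpq(z)r^{d_Q-d_P}P(B(z,r))$, and the covariance between $z$ and $y$ at distance larger than $2\hat\tau$. The mean part is controlled in $L^1$ by $\Cbias((k+1)/(n+1))^{\gbias}$ via $\E[\hat\tau^\ell\1{\hat\tau\le r_0}]\lesssim(k/(np(z)))^{\ell/d_P}$ and $p^{-\ell/d_P}\le r_0^{\ell-d_P\gbias}p^{-\gbias}$. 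I take these bounds as established in the earlier proof.

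The new work is the first-order term. Its fluctuating part is again of covariance type and falls into the $\Ccov$ bound, since $\norm{\nabla g(z)}\hat\tau\le\mathcal G_\ell$-type bounds give the same order. So the task is to control the expectation $\E[\meank(X_i-z)\1{\hat\tau\le r_0}]$ pointwise in $z$. I would condition on $\hat\tau=\tak=r$. Given this, the $k$ nearest neighbours are i.i.d. from $P$ restricted to $B(z,r)$, so the conditional mean equals
$$\frac{1}{P(B(z,r))}\int_{B(z,r)}(x-z)\md P(x)=\frac{1}{2P(B(z,r))}\int_{B(0,r)}u\md P_z(u).$$
This uses the symmetrisation $P_z=P(z+\cdot)-P(z-\cdot)$.

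When $r\le\delta(z)$, Condition \ref{cond:A4} bounds the norm of this quantity by $r\,\va{P_z}(B(0,r))/(2P(B(z,r)))\le\Lambda_P(z)r^\ell$. Taking the expectation in $\hat\tau$ gives a bound of order $\Lambda_P(z)(k/(np(z)))^{\ell/d_P}$. Integrating against $\norm{\nabla g}\md Q$ yields $\Cbias'((k+1)/(n+1))^{\gbias}$, as before.

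When $\delta(z)<\hat\tau\le r_0$, I only use the crude bound $\norm{\nabla g(z)}\hat\tau$. Then
$$\E\big[\hat\tau\1{\hat\tau>\delta(z)}\big]\le\int_0^\infty\Pb\big(\hat\tau>\max(t,\delta(z))\big)\md t.$$
The binomial tail with $P(B(z,t))\ge p(z)t^{d_P}$ gives a bound of order $(np(z))^{-1/d_P}\exp(-c\,n p(z)\delta(z)^{d_P}/k)$, up to the $k$-dependence; this requires splitting $n$ into blocks of size $n/k$, or a Chernoff bound. Integrating against $\norm{\nabla g}\md Q$ over $\{\delta<r_0\}$ and applying Condition \ref{cond:A5} with $L\asymp n/k$ gives $\Cbound((k+1)/(n+1))^{\gbias}$. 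The mean bias terms then add in $L^1$ and are squared.

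The main obstacle is the boundary tail: getting the exact exponential form $\exp(-Lp\delta^{d_P})$ with a factor $p^{-1/d_P}$, with constants uniform in $k$, in order to match Condition \ref{cond:A5}. I would first try the representation $\Pb(\hat\tau>t)=\Pb(\mathrm{Bin}(n,P(B(z,t)))<k)$ together with a Chernoff bound. This works for $np(z)t^{d_P}\ge 2k$. For smaller $t$ I would use the trivial bound, which produces the $(k/(np))^{1/d_P}$ scale.
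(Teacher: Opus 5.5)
The pointwise part of your plan matches the paper. That includes the first-order Taylor split and the conditional mean $\E[X_i-z\mid\tak=r]=\frac{1}{2P(B(z,r))}\int_{B(0,r)}u\,\md P_z(u)$, bounded by $\Lambda_P(z)r^\ell$ through \ref{cond:A4}. It also includes the crude bound $\norm{\nabla g(z)}\tak$ on $\{\delta(z)<\tak\le r_0\}$ with an exponential tail, followed by \ref{cond:A5} at $L\asymp(n+1)/(k+1)$. That tail bound is Lemma~\ref{NN_radius}~(4), which your Chernoff argument reproves; keep the indicator $\tak\le r_0$ there, since \ref{cond:A2} only controls $P(B(z,t))$ for $t\le r_0$.

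The gap is in how you turn these bounds into a bound on $\E[B_n^2]$. Write $\beta_n(z):=\1{\tak\le r_0}\meank(g(X_i)-g(z))$ and $m_z(\tak):=\E[\beta_n(z)\mid\tak]$. You centre unconditionally, assert that $B_n-\E[B_n]$ is of $\Ccov$ type, and square an $L^1$ bound on the mean. But for pairs whose neighbour balls are disjoint, the variance contains the cross terms
\[
\E\bigl[m_y(\hat\tau_{k+1}(y))\,m_z(\tak)\,\1{\hat\tau_{k+1}(y)+\tak<\norm{y-z}}\bigr]-\E\bigl[m_y(\hat\tau_{k+1}(y))\bigr]\,\E\bigl[m_z(\tak)\bigr].
\]
These do not vanish. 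Lemma~\ref{cond_ind} decouples the two neighbour samples only conditionally on both radii, and the radii at $y$ and $z$ are dependent. Your intersecting-ball computation covers only $\hat\tau_{k+1}(y)+\tak\ge\norm{y-z}$, so nothing in your plan controls these terms. The natural bounds on them are of the order of the squared mean, not of $\Ccov$ type. The proof of Theorem~\ref{bias} is not of centred-plus-mean form either: what looks like a squared $L^1$ mean there is a bound on exactly this disjoint part.

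The paper does not centre. It splits $\E[B_n^2]$ itself into disjoint and intersecting pairs, and on the disjoint event uses Lemma~\ref{cond_ind} to replace $\beta_n(y)\beta_n(z)$ by $m_y m_z$. It then applies the negative correlation Lemma~\ref{negative_correlation}, which bounds this by $\bigl(\int\E[\Phi_z(\tak)\mid\tak\le r_0]\md Q(z)\bigr)^2$ for a non-decreasing majorant $\Phi_z\ge\va{m_z}$. That is why it matters that $(\mathcal G_\ell(z)+\norm{\nabla g(z)}\Lambda_P(z))\tak^\ell$ and $\norm{\nabla g(z)}\tak$ can be glued into a non-decreasing function of $\tak$.

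Alternatively, you can repair your own scheme by centring conditionally on the radius, $\beta_n(z)=(\beta_n(z)-m_z(\tak))+m_z(\tak)$. The disjoint cross terms of the first part then vanish exactly by Lemma~\ref{cond_ind}. What remains is only the intersecting-ball computation with $\ell$ replaced by $1$. That replacement is why $\gcov\le(d_Q+2)/d_P$; it is not because $\norm{\nabla g(z)}\tak$ is of the same order as $\mathcal G_\ell(z)\tak^\ell$, which it is not. The random part $\int m_z(\tak)\md Q(z)$ must then be bounded in $L^2(\Pb)$, not in $L^1$. Minkowski's integral inequality bounds it by $\int\E[m_z(\tak)^2]^{1/2}\md Q(z)$, using $\E[\tak^{2\ell}\mid\tak\le r_0]$ and $\E[\tak^2\1{\delta(z)<\tak}\mid\tak\le r_0]$ from Lemma~\ref{NN_radius}. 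The halved exponential rate is absorbed by the supremum over $L$ in \ref{cond:A5}.
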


Theorem \ref{better_bias} recovers similar rates of convergence for the conditional bias of the matching estimator than in Theorem \ref{bias}, 
if we consider the way in which they depend on the transferability exponent $\gbias$. The difference lies in the fact that we now 
allow $1/d_P<\gbias\le 2/d_P$ instead of $\gbias\le 1/d_P$, as in Theorem \ref{bias}. This is particularly relevant when the intrinsic 
dimension $d_P$ is equal to $3$ or $4$, if we want to reach the critical exponent $\gbias=1/2$.

\begin{coro}[Mean-squared error]\label{better_mse}
Suppose that we have Conditions \ref{cond:A1} to \ref{cond:A5} with $\ell=2$, $d_Q=d_P\le 4$, $\gres=1,\,\gcov\ge 1$, and $\gbias\ge 1/2$.
Suppose further that $k^{\gcov}\le\alpha n^{\gcov-1}$ and $k^{2\gbias}\le\alpha n^{2\gbias-1}$ for some constant $\alpha\in\Rs$. Then there 
exists a constant $C\in\Rs$ such that the mean-squared error of the estimator $\hat e_0(h)$ satisfies 
$$\E[(\hat e_0(h)-e(h))^2]\le C(m^{-1}+n^{-1}).$$ 
\end{coro}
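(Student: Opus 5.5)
The proof decomposes the error as $\hat e_0(h)-e(h)=V_{m,n}+B_n$, with $V_{m,n}$ the conditional variance term of Theorem \ref{variance} (with $L=0$) and $B_n$ the conditional bias term of Theorem \ref{better_bias}. Since $\E[V_{m,n}\mid(X_i)_{i\in\ieu}]=0$ and $B_n$ is $(X_i)_{i\in\ieu}$-measurable, the cross term vanishes. This gives
$$\E[(\hat e_0(h)-e(h))^2]=\E[V_{m,n}^2]+\E[B_n^2].$$
It then suffices to bound each term by a constant times $m^{-1}+n^{-1}$.

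\textbf{Variance term.} For $L=0$ we have $D=0$ and $K^*=1$, so the constraint $k\ge(2D+1)K^*$ of Theorem \ref{variance} reduces to $k\ge 1$, which always holds. With $\gres=1$, the factor $k^{1-\gres}$ equals $1$. Theorem \ref{variance} therefore gives
$$\E[V_{m,n}^2]\le C\left(\frac\Cvar m+\frac\Cres{n+1}\right),$$
which is of the required order. We also need $\gres=1\in[0,d_Q/d_P]$, and this holds because $d_Q=d_P$.

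\textbf{Bias term.} Theorem \ref{better_bias} requires $\ell\in(1,2]$, $\gcov\in[0,(d_Q+2)/d_P]$ and $\gbias\in[1/d_P,\ell/d_P]$. Here $\ell=2$, and $d_P\le4$ makes the interval $[1/d_P,2/d_P]$ contain $1/2$. The exponents are assumed admissible in Conditions \ref{cond:A3}–\ref{cond:A5}, so $\gbias\ge1/2\ge1/d_P$ because $d_P\ge 2$ is implicit. Theorem \ref{better_bias} then gives
$$\E[B_n^2]\le\Ccov\left(\frac{k+1}{n+1}\right)^{\gcov}+(\Cbias^2+\Cbias^{'2}+\Cbound^2)\left(\frac{k+1}{n+1}\right)^{2\gbias}.$$
Using $k+1\le2k$ and $n+1\ge n$, the first term is at most $2^{\gcov}\Ccov\,k^{\gcov}n^{-\gcov}$. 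The hypothesis $k^{\gcov}\le\alpha n^{\gcov-1}$ then yields a bound $2^{\gcov}\alpha\Ccov/n$. The second term is handled identically with exponent $2\gbias$ and the hypothesis $k^{2\gbias}\le\alpha n^{2\gbias-1}$.

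Summing the two bounds gives the claim with $C$ depending on $\Cvar,\Cres,\Ccov,\Cbias,\Cbias',\Cbound,\alpha,\gcov,\gbias$. The only real care is in checking that the exponent ranges required by Theorems \ref{variance} and \ref{better_bias} are compatible with the stated hypotheses. That check is where I expect the main (minor) obstacle:
\begin{itemize}
\item the admissibility of $\gbias\ge 1/2$ within $[1/d_P,2/d_P]$, which needs $2\le d_P\le4$;
\item the edge case $d_P<2$, where the lower endpoint $1/d_P$ exceeds $1/2$; there one must use $\gbias\ge1/d_P$ directly, which is still $\ge1/2$, so the same argument applies.
\end{itemize}
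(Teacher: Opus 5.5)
Your proposal is correct and follows the same route the paper takes; the paper gives no separate proof, and the corollary comes from combining Theorem \ref{variance} at $L=0$ (so $D=0$, $K^*=1$, and $k\ge 1$ suffices) with $\gres=1$ and Theorem \ref{better_bias}, then using the growth conditions on $k$ to reduce $\bigl(\frac{k+1}{n+1}\bigr)^{\gcov}$ and $\bigl(\frac{k+1}{n+1}\bigr)^{2\gbias}$ to $O(n^{-1})$. One correction: nothing in the paper makes $d_P\ge 2$ implicit, and for $d_P<2$ the hypothesis $\gbias\ge 1/2$ does not imply $\gbias\ge 1/d_P$ (nor does Condition \ref{cond:A3} with $\ell=2$ force $\gcov\le (d_Q+2)/d_P$), so the exponent ranges of Theorem \ref{better_bias} have to be read as part of the corollary's hypotheses, as the paper tacitly does, rather than derived as you attempt.
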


We can deduce from this theorem that the final matching estimator $\hat e_0(h)$ can achieve parametric rates of convergence when
the intrinsic dimensions are equal and no more than $4$, and under transferability conditions similar to those of 
section \ref{polynomial}.

\section{Application to the estimation of average treatment effects}\label{ate}

We now discuss how the results from the previous sections can be applied to the estimation of average treatment effects. We consider a 
random vector $(X,W,Y(1),Y(0))$ taking values in $\R^d\times\{0;1\}\times\R\times\R$. In this random vector, $X$ is a vector of 
covariates for an individual, $W$ corresponds to whether the individual received an active ($W=1$) or a passive treatment ($W=0$), and 
$Y(1)$ and $Y(0)$ are the two potential outcomes for the active and passive treatment, respectively. One objective is then to 
estimate the average treatment effect $\mu:=\E[Y(1)-Y(0)]$, when for each individual $i\in\ieu[N]$, only the quantities 
$(X_i,W_i,Y_i(W_i))$ are observed, but not the potential outcome $Y_i(1-W_i)$ for the treatment the individual did not receive. The 
missing outputs $Y_i(1-W_i)$ are then replaced by an average of the observed outputs $Y_j(1-W_i)$ for the $k$ units $j$ that have the 
covariates closest to $X_i$ between the individuals given the treatment $W_j=1-W_i$, as proposed by \citet{abadie2006large}. The 
average treatment effect context then relates to our problem, the source and target distributions being the conditional 
distributions of $X$ given $W=1$ and $X$ given $W=0$, or vice versa. For the covariate shift setup, we assume that the treatment 
$W$ is independent from the outcomes $(Y(1),Y(0))$ given the covariates $X$, a situation called \emph{unconfoundedness}.

In this scenario, it is commonly assumed that the covariate vector $X$ lives in a bounded domain $\X$ and has a density with respect to 
the Lebesgue measure that is bounded away from zero; see, for example, \citet{abadie2006large} and \citet{holzmann2026multivariate}. However, we 
can show that the same rates of convergence for the same estimators hold under milder assumptions on the covariate support and 
distribution. Although our main focus is the average treatment effect $\mu$, the same results are valid for the average treatment effect 
on the treated $\mu^{\textup{treat}}:=\E[Y(1)-Y(0)\mid W=1]$. 

We assume that we have i.i.d. replications $(X_i,W_i,Y_i(1),Y_i(0))_{i\in\ieu[N]}$ of the vector $(X,W,Y(1),Y(0))$. Write $S$ for the marginal 
distribution of $X$, supported on some possibly unknown Borel set $\X\subset\R^d$. Fix $k\in\ieu[N]$, for $S$-almost all $x\in\X$ and
all $w\in\{0;1\}$, set $\hat\tau_{k+1,w}(x)$ the distance between the point $x$ and its $(k+1)$th-nearest neighbour among the observations 
$X_i$ such that $W_i=w$, with the convention $\hat\tau_{k+1,w}(x):=\infty$ if there are not enough observations. Keeping the notation from
Section \ref{polynomial}, in particular the set of multi-indices $N_L$ and the vector of monomials $\zeta(z,T)$, we define the final
estimator $$\hat\mu_L:=\frac 1N\sum_{i=1}^N(2W_i-1)(Y_i-\hat g_{L,1-W_i}(X_i)),\text{ where}$$
$$\hat g_{L,w}(x):=\1{\hat\tau_{k+1,w}(x)\le r_0}e_0^\top\underset{b\in\R^{N_L}}\argmin\sum_{\underset{W_i=w}{i=1}}^N(Y_i-b^\top
\zeta(x,X_i))^2\1{\norm{X_i-x}<\hat\tau_{k+1,w}(x)},$$ for $S$-almost all $x\in\X$ and all $w\in\{0;1\}$.
Apart from censoring when $\hat\tau_{k+1,W_i}(X_j)>r_0$, which is not necessary in most cases and could be omitted in practice, the 
estimator $\hat\mu_L$ coincides with the local polynomial estimator from \citet{holzmann2026multivariate}. In particular, when $L=0$, we 
recover the pointwise matching estimator, introduced in \citet{abadie2006large},
$$\hat g_{0,w}(x)=\frac 1k\sum_{\underset{W_i=w}{i=1}}^NY_i\;\1{\norm{X_i-x}<\hat\tau_{k+1,w}(x)}.$$

We set, for $S$-almost all $x\in\X$ and all $w\in\{0;1\}$, the regression function $g_w(x):=\E[Y(w)\mid X=x]$. 
We consider the following conditions. 
\begin{enumerate}[label=(A\arabic*'), wide=0.5em, leftmargin=*]
\item\label{cond:A1'} There exist $\ell\in(0,L+1]$ and measurable functions $\mathcal G_{\ell,0},\mathcal G_{\ell,1},\s_0^2,
\s_1^2:\X\to\R_+$ such that for $S$-almost all $x\in\X$ and all $w\in\{0;1\}$, the conditional variance $z\mapsto\Var(Y(w)\mid X=z)$ 
is bounded by $\s_w^2(x)$ on $B(x,r_0)\cap\X$, and the regression function $g_w$ is the restriction of a function having 
derivatives of order $\ceil\ell-1$ that are all $(\ell+1-\ceil\ell)$-H\"older continuous on $B(x,3r_0)\cap\Conv(\X)$ with constant 
$\mathcal G_{\ell,w}(x)\ceil{\ell-1}!$.
\item\label{cond:A2'} There exist $d_0\in\Rs$ and measurable functions $p:\X\to\Rs,v:\X\times[0,r_0]\to[1,\infty[$ such that for $S$-almost all
$x\in\X$, all $r\in[0,r_0]$, and all Borel sets $\mathcal A\subset B(0,3)^{K^*}$, we have \begin{align*} 
S(B(x,r)) & \ge p(x)r^{d_0},\text{ and} \\ \Pb\left(\left(\frac{X_1-x}r,\dots,\frac{X_{K^*}-x}r\right)\in\mathcal A\right) &
\le(v(x,r)p(x)r^{d_0})^{K^*}\va{\mathcal A},
\end{align*}
where $\va{\mathcal A}$ designates the Lebesgue measure of the Borel set $\mathcal A\subset\R^{dK^*}$. \\
Besides, there exist measurable functions $\eta_0,\eta_1:\X\to\,(0,1)$ such that for $S$-almost all $x,z\in\X$ such that $\norm{z-x}
\le 2r_0$, and all $w\in\{0;1\}$, $$\eta_w(x)\le\Pb(W=w\mid X=z)\le 1-\eta_{1-w}(x).$$
\item\label{cond:A3'} There exist $\gres\in[0,1],\,\gcov\in[0,1+2\ell/d_0]$, and $\gbias\in[0,\ell/d_0]$, such that for all $w\in\{0;1\}$,
the following integrals are finite:
$$\int_\X(\norm{g_w}_{\infty,B(x,r_0)\cap\X}^2+\sigma_w^2(x))v(x)^{2DK^*}\md S(x)<\infty,$$
$$\int_\X\left[\sigma_w^2(x)(1-\eta_w(x))^2v(x)^{2DK^*+1}\right]\frac{p(x)^{1-\gres}}{\eta_w(x)^{\gres}}\md S(x)<\infty,$$
$$\int_\X\left[(g_w(x)^2+\mathcal G_{\ell,w}(x)^2v(x)^{2DK^*})(1-\eta_w(x))^2v(x)\right]\,\frac{p(x)^{1-\gcov}}{\eta_w(x)^{\gcov}}
\md S(x)<\infty,\text{ and}$$ 
$$\int_\X\left[(\va{g_w(x)}+\mathcal G_{\ell,w}(x)v(x)^{DK^*})(1-\eta_w(x))\right]\,\frac 1{(\eta_w(x)p(x))^{\gbias}}\md S(x)<\infty,$$
where we have set, for $S$-almost all $x\in\X$, $v(x):=\underset{r\in[0,r_0]}\sup\frac{p(x)r^d}{S(B(x,r))}v(x,r)\le\underset{r\in[0,r_0]}\sup
v(x,r)$.
\end{enumerate}

\bigskip

This list of conditions is similar to that in Section \ref{general}, with three major differences. \begin{itemize}
\item We have two regression functions and conditional variances due to the presence of the two potential outcomes $Y(1)$ and $Y(0)$.
\item We only have one unconditional distribution $S$ for the covariate vector $X$. As a result, the source and target distributions, 
which are the conditional distributions of $X$ given $W=1$ and of $X$ given $W=0$, or vice versa, naturally share the same intrinsic 
dimension $d_0$, which is the intrinsic dimension of $S$ as given in Condition \ref{cond:A2'}.
\item The transferability between the source and target probability measures is controlled via the functions $\eta_0$ and $\eta_1$, which give
local bounds to the conditional probability to assign a treatment $w$ given the covariate vector $X$, translating the selection bias. 
Then the last three parts of Condition \ref{cond:A3'} measure the integrability of the function $x\mapsto 1/(\eta_w(x)p(x))$ with respect
to $\mathrm dS(x)$ or $p(x)\mathrm dS(x)$.
\end{itemize}

\bigskip

In the literature, the following global assumptions are common. \begin{itemize}
\item The conditional variances $\Var(Y(0)\mid X)$ and $\Var(Y(1)\mid X)$, the regression functions $g_0$ and $g_1$, and all their
derivatives, are bounded on $\X$.
\item The distribution $S$ has a density with respect to the Lebesgue measure of $\R^d$ that is bounded away from zero and infinity on the
compact set $\X$.
\item The conditional probability of assigning a given treatment $\Pb(W=w\mid X)$ is bounded away from zero and one on $\X$.
\end{itemize}
In this case, the integrability conditions in \ref{cond:A3'} are automatically satisfied. Our list of conditions is therefore more 
general and enables the study of treatment effect estimation when the covariate vector is supported on an unbounded domain, on a known or
unknown submanifold of $\R^d$, and when the conditional probability to assign a given treatment may approach zero or one.

We are now in a position to state the following result.
\begin{thrm}\label{ate_poly}
Suppose that we have Conditions \ref{cond:A1'} to \ref{cond:A3'}, and that we have set the parameter $k\ge(2D+1)K^*$. Then there exists a 
constant $C\in\Rs$ such that the mean-squared error of the estimator $\hat\mu_L$ satisfies 
$$\E[(\hat\mu_L-\mu)^2]\le C\left(\frac 1N+\frac 1{k^{1-\gres}N^{\gres}}+\left(\frac kN\right)^{\gcov}+\left(\frac kN\right)^{2\gbias}
\right).$$
\end{thrm}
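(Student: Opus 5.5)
We reduce Theorem \ref{ate_poly} to the covariate-shift results of Section \ref{general}, namely Theorems \ref{variance} and \ref{bias}. The reduction conditions on the treatment vector $(W_i)_{i\in\ieu[N]}$.

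First we decompose the error. Write $N_w:=\sum_i\1{W_i=w}$. Since $Y_i=Y_i(W_i)$, we have
$$\hat\mu_L-\mu=\Big(\frac 1N\sum_{i=1}^N(g_1(X_i)-g_0(X_i))-\mu\Big)+\frac 1N\sum_{i=1}^N(2W_i-1)(Y_i-g_{W_i}(X_i))+\sum_{w\in\{0;1\}}(1-2w)\,\frac{N_{1-w}}N\,\Big[\frac 1{N_{1-w}}\sum_{W_j=1-w}\big(\hat g_{L,w}(X_j)-g_w(X_j)\big)\Big].$$
The first two terms are centred i.i.d. averages. Their second moments are bounded by $C/N$ using the first integrability condition in \ref{cond:A3'}.

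The bracket is exactly the covariate-shift estimation error of $\hat e_L(h)-\meanm g_w(X_j^*)$. Its ingredients are:
\begin{itemize}
\item the target sample $X_j^*=X_j$ with $W_j=1-w$, so $m=N_{1-w}$ and $Q=P_{X\mid W=1-w}$;
\item the source sample $(X_i,Y_i)$ with $W_i=w$, so $n=N_w$, $P=P_{X\mid W=w}$ and $h(x,y)=y$.
\end{itemize}
Conditionally on $(W_i)$, the two subsamples are independent and i.i.d. from $P$ and $Q$. Unconfoundedness gives the covariate-shift assumption with $g=g_w$.

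Next we check that \ref{cond:A1'}--\ref{cond:A3'} imply \ref{cond:A1}--\ref{cond:A3} for this pair, with $d_P=d_Q=d_0$. Set $\pi_w:=\Pb(W=w)$. By Bayes' rule, $\md P(z)=\Pb(W=w\mid X=z)\md S(z)/\pi_w$. Then \ref{cond:A2'} gives, for $r\le r_0$,
$$P(B(x,r))\ge\eta_w(x)p(x)r^{d_0}/\pi_w,\qquad Q(B(x,2r))\le\frac{1-\eta_w(x)}{\pi_{1-w}}\,S(B(x,2r)).$$
So we take $p_P:=\eta_wp/\pi_w$. For the multivariate density bound, $v_P(x,r)\le v(x,r)/\eta_w(x)$ would be too crude. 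Instead, we use that the density of $P$ relative to $S$ is at most $(1-\eta_{1-w}(x))/\pi_w\le 1/\pi_w$ on $B(x,2r_0)$. This bounds the $K^*$-tuple probability by $(v(x,r)p(x)r^{d_0}/\pi_w)^{K^*}\va{\mathcal A}$, hence by $(v_Pp_Pr^{d_0})^{K^*}\va{\mathcal A}$ with $v_P:=v/\eta_w$.

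\textbf{Main obstacle: the powers of $\eta_w$.} The $v$-powers in \ref{cond:A3} would then produce extra factors $\eta_w^{-2DK^*}$, which \ref{cond:A3'} does not contain. So the reduction must use the proof structure rather than the theorem statements. The multiplicative amplitude only enters through control of $\norm{M(z)^{-1}}$ and the smallest eigenvalue after rescaling. That step can be carried out with respect to $S$, giving the pair $(p,v)$ from $S$. The mass $\eta_wp$ then enters only through $\Pb(\hat\tau_{k+1}(z)>r)\le\Pb(\mathrm{Bin}(n,\eta_w(z)p(z)r^{d_0}/\pi_w)\le k)$.

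For $\Rpq$ we need $S(B(x,2r))\le C\,S(B(x,r))$. I would obtain this from $S(B(x,2r))\le v(x,2r)p(x)(2r)^{d_0}$ together with the definition of $v(x)$, so that $\Rpq(x)\lesssim(1-\eta_w(x))v(x)/\eta_w(x)$. Substituting into $\Cres,\Ccov,\Cbias$ gives integrands involving $(1-\eta_w)^{\cdot}p^{1-\gamma}\eta_w^{-\gamma}$. These match \ref{cond:A3'}: the factor $(1-\eta_w)$ from $\md Q/\md S$ combines with one from $\Rpq$, giving the $(1-\eta_w)^2$ there. This bookkeeping is the step I expect to be delicate.

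Finally we integrate over $(W_i)$. By Theorems \ref{variance} and \ref{bias}, the conditional MSE of the bracket is at most
$$C\Big[\frac 1{N_{1-w}}+\frac 1{k^{1-\gres}(N_w+1)^{\gres}}+\Big(\frac{k+1}{N_w+1}\Big)^{\gcov}+\Big(\frac{k+1}{N_w+1}\Big)^{2\gbias}\Big].$$
This bound also covers the degenerate cases: if $N_w\le k$, censoring gives $\hat g_{L,w}=0$, and the bias term is then bounded via $\int g_w^2\,\md S$. The factor $N_{1-w}/N$ in front of the bracket absorbs $1/N_{1-w}$, giving $1/N$.

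Since $N_w\sim\mathrm{Bin}(N,\pi_w)$, we have $\E[(N_w+1)^{-\gamma}]\le C\,N^{-\gamma}$ for every $\gamma\ge0$, e.g. by Chernoff bounds. Combining the three pieces gives the claimed bound.
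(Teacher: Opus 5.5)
Your route is the paper's. You condition on $(W_i)$, treat $S_w$ and $S_{1-w}$ as source and target with $p_P\propto\eta_wp$ and $\Rpq\propto v(1-\eta_w)/\eta_w$, apply Theorems \ref{variance} and \ref{bias}, and average over $N_w$. Your explicit decomposition and the absorption of $1/N_{1-w}$ into $(N_{1-w}/N)^2$ are fine additions. You are also right that Condition \ref{cond:A2'} gives the third part of Condition \ref{cond:A2} for $S_w$ only with an amplitude of order $v(1-\eta_{1-w})/\eta_w$, not $v$. The paper's proof does not address this: it asserts that the integrals of \ref{cond:A3} ``match'' those of \ref{cond:A3'}, which tacitly keeps $v$ as the amplitude of $S_w$.

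The gap is your repair: Lemmas \ref{key} and \ref{key_cov} cannot be run ``with respect to $S$''. They need an anti-concentration bound for $\det\Xi(V)$ under the conditional law of the neighbours given $\hat\tau_{k+1,w}(z)=\tau$. Those neighbours are i.i.d.\ from $S_w$ restricted to $B(z,\tau)$ and normalised by $S_w(B(z,\tau))$, not by $S(B(z,\tau))$. Inside the ball, $\md S_w/\md S\propto\Pb(W=w\mid X=\cdot)$ may take any value in $[\eta_w(z),1-\eta_{1-w}(z)]$. So this conditional density can exceed its $S$-analogue by the factor $(1-\eta_{1-w}(z))/\eta_w(z)$. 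The bound $Cv^{K^*}\e^{1/D}$ in the proof of Lemma \ref{key} then becomes $C(v/\eta_w)^{K^*}\e^{1/D}$, and you end up with $(v/\eta_w)^{2DK^*}$.

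This is not an artefact of the proof. Take $d=1$, $L=1$ and $S$ uniform, so $v$ is bounded. Let the propensity equal $\eta$ on $B(z,\tau)$ except on $[z+a,z+\tau]$, of length $\e\ll a$, where it is close to one. If $\eta a\lesssim\e/k$, all $k$ neighbours lie in that interval with probability bounded below. On that event $e_0^\top M(z)^{-1}e_0\approx 12a^2/(k\e^2)$. For $a=\e/(k\eta)$ this is of order $\eta^{-2}k^{-3}$, not $O(v^{2DK^*}/k)$.

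Hence this obstacle disappears only for $L=0$, where $D=0$ and $e_0^\top M(z)^{-1}e_0=1/k$. For $L\ge1$ you need one of the following:
\begin{itemize}
\item the factors $\big((1-\eta_{1-w})/\eta_w\big)^{2DK^*}$ inside the integrals of \ref{cond:A3'}, with power $DK^*$ in the last one;
\item an extra hypothesis bounding the local oscillation of $x\mapsto\Pb(W=w\mid X=x)$, e.g.\ $1-\eta_{1-w}\le c\,\eta_w$.
\end{itemize}
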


In particular, if we have $L+1\ge d_0/2$, which can be ensured by setting $L\ge\ceil{d/2}-1$, and if the number $k$ of nearest neighbours
grows slowly, if at all, with the sample size $N$, then the estimator $\hat\mu_L$ for the average treatment effect $\mu$ can achieve the
parametric rate of convergence $N^{-1}$ in all dimensions. Achieving this rate requires two main elements. First, the regression functions
$g_0$ and $g_1$ must be sufficiently regular. If $\ell\in\Rs$ denotes the minimum between the H\"older regularity parameters of $g_0$ and
$g_1$, then we ask $d_0\le 2\ell$, where $d_0$ is the intrinsic dimension of the covariate distribution $S$. Second, we must control
the selection bias, which is measured by the integrability of the functions $1/\eta_0$ and $1/\eta_1$. More precisely, Condition
\ref{cond:A3'} must be satisfied for $\gres=1,\,\gcov\ge 1$, and $\gbias\ge 1/2$. 
In the case where, for $w\in\{0;1\}$, the functions $\s^2_w,g_w,\mathcal G_{\ell,w}$, and $v$ are bounded away from zero and infinity on
$\X$, then the integrability conditions become 
$$\int_\X\frac 1{\eta_w(x)}\md S(x)<\infty,\text{ and }\int_\X\frac 1{(\eta_w(x)p(x))^{1/2}}\md S(x)<\infty.$$
If, in addition, the distribution $S$ has density with respect to the Lebesgue measure on $\R^d$, with density equal to $p$ up to a 
constant, then the integrability conditions become $$\int_\X\frac{p(x)}{\eta_w(x)}\md x<\infty,\text{ and }\int_\X\left(\frac{p(x)}{
\eta_w(x)}\right)^{1/2}\md x<\infty.$$ As a result, it implies that the conditional selection probability function $x\mapsto\Pb(W=w\mid
X=x)$ is allowed to approach zero, but not faster than the covariate probability density $p$.

\section{Conditions on the relative behaviour of the source and target distributions}\label{ratio}

\subsection{Usual distributions on the Euclidean space}\label{full_dim}

In this section, we study the conditions \ref{cond:A2} to \ref{cond:A5} for distributions $P$ and $Q$ that have density with respect to the
Lebesgue measure on $\R^d$, denoted respectively by $f_P$ and $f_Q$. To show that these conditions are the most general, we first give intuitive 
interpretations before investigating what they become for common distributions on $\R^d$. These conditions can be put into two categories: 
the volume bounding conditions and the integrability conditions. We start by discussing volume bounding. 

In the first part of Condition \ref{cond:A2}, writing $P(B(z,r))\ge p(z)r^{d_P}$, here $d=d_P$ is the dimension of the overlying space 
$\R^d$, and $p$ can be up to some constant the minimum of the density $f_P$ over a subset of $B(z,r_0)$ that always intersect $B(z,r),
r\in[0,r_0]$ with a volume that is comparable to that of $B(z,r)$. Boundary effects are discussed at the end of this section. When
there are no boundary effects, $p$ can often be set to the density $f_P$ multiplied by a positive constant. The last part of Condition
\ref{cond:A2}, writing $Q(B(z,2r))\le\Rpq(z)P(B(z,r))$ when $d_Q=d_P$, works well with the previous lower bound. Indeed, it is then 
sufficient to show that there exists a function $q:\X\to\R_+$ such that for $Q$-almost all $z\in\X$ and all $r\in[0,2r_0]$, $Q(B(z,r))\le
q(z)r^d$, and then set $\Rpq(z):=2^dq(z)/p(z)$. For example, $q$ can be the maximum of $f_Q$ over $B(z,2r_0)$ multiplied by the volume 
of the unit ball. However, the general formulation works better with boundary effects; see the proof of Proposition \ref{uniform}.
Similarly, the last part of Condition \ref{cond:A2} is satisfied if, for instance, $v(z,r)p(z)$ is the maximum of $f_P$ over $B(z,r)$. 
Although it is classical in the literature to assume that $v$ is a bounded function \citet{zamolodtchikov2026minimax}, our results are
still valid in the case where any power of $v$ is negligible compared to $f_Q$; see the proof of Proposition \ref{gaussian}.

Finally, in Condition \ref{cond:A4}, writing $\va{P_z}(B(0,r))\le 2\Lambda_P(z)P(B(z,r))\,r^{\ell-1}$ for all $r\in[0,\delta(z)]$, 
where $P_z:=P(z+\cdot)-P(z-\cdot)$, since $P$ has density $f_P$ with respect to the Lebesgue measure on $\R^d$, we can write
$$\va{P_z}(B(0,r))=\int_{B(0,r)}\va{f_P(z+x)-f_P(z-x)}\md x.$$
If the density $f_P$ is H\"older-continuous on $B(z,r)$, with exponent $\ell-1\in(0,1]$ and constant $L(z)$, then $$\va{P_z}(B(0,r))
\le 2r^{\ell-1}L(z)\va{B(0,r)}\le 2r^{\ell-1}\va{B(0,1)}\frac{L(z)}{p(z)}P(B(z,r)),$$
using Condition \ref{cond:A2}. Condition \ref{cond:A4} is then satisfied for $\Lambda_P(z):=\va{B(0,1)}L(z)/p(z)$, which, with the
intuition behind Condition \ref{cond:A2}, can be, up to a negligible function, the log-Lipschitz norm $\norm{\nabla f_P(z)}/f_P(z)$ of a
differentiable density $f_P$, when $\ell=2$. This reasoning works if the density $f_P$ is H\"older-continuous on $B(z,\delta(z))$, and we
can take into account boundary effects by setting $\delta(z)$ the minimum between $r_0$ and the distance $\delta(z,\X^c)$ between $z$ and
the complement of $\X$.

\bigskip

We now discuss the integrability conditions. Although they are easier to state when the functions $\s^2,\,g$ and all its derivatives are
bounded, they do not change if these functions have growths that are negligible compared to the decay of $Q$, for instance polynomial
growths against a sub-exponential tail for $Q$. The same applies to the functions $v$ and $\Lambda_P$ from Conditions \ref{cond:A2} and 
\ref{cond:A4}, which are often negligible compared to the decay of $Q$. Following the intuition behind the functions $p$ and $\Rpq$, the
integrability conditions are often equivalent to conditions of the form $$\int_\X\frac{f_Q(z)^\alpha}{f_P(z)^\gamma}\md z<\infty,$$ 
with $\alpha\in\{1,2\}$ and $\gamma\in\Rs$. With a finite parameter $k$, we saw that the conditional bias of the final estimator
$\hat e_L(h),\,L\in\N$ was comparable to the conditional variance when the integrability conditions were satisfied for the couples of
exponents $(\alpha,\gamma)=(1,1/2)$ and $(\alpha,\gamma)=(2,1)$. In both cases, the exponent on $f_Q$ is twice the exponent on $f_P$, 
which, as stated above, allows $Q$ to have a heavier tail than $P$ as long as it is not twice as heavy.

The different integrability condition is Condition \ref{cond:A5}. It features an integral over the extended boundary
$\{z\in\X\mid\delta(z)<r_0\}$, that is, points whose distance to the boundary of $\X$ is less than $r_0$. This condition is then
trivially satisfied when $\X$ has no boundary, in other words when the density $f_P$ is regular on the whole space $\X=\R^d$. It is 
also not present in section \ref{polynomial} since the local polynomial fitting takes into account boundary effects. See the 
end of this section for a further discussion of this condition. 

\bigskip

We now go through a series of examples to see what all these conditions become for common distributions on $\R^d$, starting with
multivariate Gaussian distributions.

\begin{prop}[Gaussian distributions]\label{gaussian}
Let $P$ and $Q$ follow multivariate Gaussian distributions $P\sim\mathcal N_d(\mu_P,\Sigma_P)$ and $Q\sim\mathcal N_d(\mu_Q,\Sigma_Q)$. 
Assume that the norms of $\s^2,\,g$ and all its derivatives are controlled above by some function $z\mapsto C\exp(A\norm z)$ and below by
$z\mapsto c\exp(-a\norm z)$, where $a,c,A,C\in\Rs$. Then the couple $(P,Q)$ satisfies the conditions \ref{cond:A2} to \ref{cond:A5}, for
$\gres,\gcov,\gbias\in\R_+$ if and only if $2\Sigma_Q^{-1}>\max\{\gres;\gcov\}\Sigma_P^{-1}$ and $\Sigma_Q^{-1}>\gbias\Sigma_P^{-1}$, in the
sense of positive semi-definite matrix inequalities. In particular, with the critical choices $\gcov=\gres=1$ and $\gbias=1/2$, the couple
$(P,Q)$ satisfies the lists of conditions if and only if $2\Sigma_Q^{-1}>\Sigma_P^{-1}$.
\end{prop}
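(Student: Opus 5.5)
We treat the two kinds of conditions separately: first the volume-bounding conditions \ref{cond:A2} and \ref{cond:A4}, for explicit functions $p,\Rpq,v,\Lambda_P$ that hold for every pair of Gaussians, and then the integrability conditions \ref{cond:A3} and \ref{cond:A5}, which will be equivalent to the matrix inequalities. Since $\X=\R^d$ has no boundary, we may take $\delta\equiv r_0$, so the set $\{\delta<r_0\}$ in \ref{cond:A5} is empty and $\Cbound=0$. We take $d_P=d_Q=d$.

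\emph{Volume bounds.} Write $f_P(x)=c_P\exp(-\frac12(x-\mu_P)^\top\Sigma_P^{-1}(x-\mu_P))$. For $\norm{x-z}\le 3r_0$, expanding the quadratic form gives
$$\va{\log f_P(x)-\log f_P(z)}\le C_1(1+\norm z),$$
with $C_1$ depending only on $\Sigma_P,\mu_P,r_0$. Hence we set
$$p(z):=\va{B(0,1)}\inf_{B(z,r_0)}f_P\ge c\,f_P(z)e^{-C_1\norm z},$$
which gives $P(B(z,r))\ge p(z)r^d$. The last part of \ref{cond:A2} holds with $v(z,r)p(z)=\sup_{B(z,3r)}f_P$ (up to a volume constant), so that $v(z)\le Ce^{C_2\norm z}$. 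In the same way $Q(B(z,2r))\le C e^{C_3\norm z}f_Q(z)r^d$, so we may take $\Rpq(z)=Ce^{C_4\norm z}f_Q(z)/f_P(z)$. For \ref{cond:A4}, $f_P$ is smooth with $\norm{\nabla f_P(y)}\le C(1+\norm y)f_P(y)$, so the argument given before the proposition yields $\Lambda_P(z)\le Ce^{C_5\norm z}$.

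\emph{Integrability, sufficiency.} Every integrand in \ref{cond:A3} and \ref{cond:A4} is of the form
$$E(z)\,f_Q(z)^\alpha f_P(z)^{-\gamma},$$
where $E$ is bounded above by $Ce^{A'\norm z}$ (this uses the growth assumption on $\s^2,g,\mathcal G_\ell$, noting that $\mathcal G_\ell$ is a local sup of derivatives and is therefore still exponentially bounded). The pairs of exponents are $(\alpha,\gamma)=(2,\gres),(2,\gcov)$ for $\Cres,\Ccov$, $(1,\gbias)$ for $\Cbias,\Cbias'$, and $(1,0)$ for $\Cvar$. The exponent of the Gaussian part is
$$-\tfrac12 z^\top(\alpha\Sigma_Q^{-1}-\gamma\Sigma_P^{-1})z+O(\norm z).$$
When $\alpha\Sigma_Q^{-1}-\gamma\Sigma_P^{-1}$ is positive definite, this quadratic term dominates every $e^{A''\norm z}$, and all the integrals are finite. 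This gives the ``if'' direction.

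\emph{Necessity.} This is the main obstacle, because the conditions only require the existence of some admissible $p,\Rpq,v$, and these could a priori be chosen better than ours. The plan is to show that any admissible choice is essentially forced, using lower bounds:
\begin{itemize}
\item Taking $r=r_0$ in \ref{cond:A2} gives $p(z)\le r_0^{-d}P(B(z,r_0))\le Ce^{C\norm z}f_P(z)$.
\item Combining \ref{cond:A2} with $P(B(z,r))\ge p(z)r^d$ gives $\Rpq(z)\ge Q(B(z,2r_0))/P(B(z,r_0))\ge ce^{-C\norm z}f_Q(z)/f_P(z)$.
\item We have $v\ge1$ and $\s^2,g,\mathcal G_\ell\ge ce^{-a\norm z}$.
\end{itemize}
Hence each integrand is bounded below by $ce^{-C\norm z}f_Q^\alpha f_P^{-\gamma}$. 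If $\alpha\Sigma_Q^{-1}-\gamma\Sigma_P^{-1}$ is not positive definite, pick a unit vector $u$ with $u^\top(\alpha\Sigma_Q^{-1}-\gamma\Sigma_P^{-1})u\le0$. The integrand then fails to decay quadratically along the direction $u$, and integrating over a cone around $u$ (or a tube, in the degenerate case) shows divergence. The delicate case is the boundary case, where this quadratic form vanishes. There the linear terms and the $e^{\pm C\norm z}$ factors compete, and we would handle it by integrating over the half-line side of direction $\pm u$ on which the linear part is nonnegative, so the integrand is not integrable. Applying this to $\Cres$, $\Ccov$ and $\Cbias$ gives exactly $2\Sigma_Q^{-1}>\max\{\gres;\gcov\}\Sigma_P^{-1}$ and $\Sigma_Q^{-1}>\gbias\Sigma_P^{-1}$. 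The critical choice $\gres=\gcov=1$, $\gbias=1/2$ then reduces both inequalities to $2\Sigma_Q^{-1}>\Sigma_P^{-1}$.
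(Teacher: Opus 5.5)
Your construction of $p$, $v$, $\Rpq$ and $\Lambda_P$, the choice $\delta\equiv r_0$, and the ``if'' direction are correct and follow the paper's proof in substance. The paper takes $p$ proportional to $f_P$ via a half-space argument; your $\va{B(0,1)}\inf_{B(z,r_0)}f_P$ costs an extra factor $e^{-C\norm{z}}$, which is harmless. The paper asserts ``only if'' only for its own choice of $p$. You rightly note that the conditions are existential, but your necessity argument fails at exactly that point.

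You have only an \emph{upper} bound on $p$. That handles $p^{-\gbias}$ in $\Cbias$, and $p^{1-\gcov}$ when $\gcov\ge1$. In $\Cres$ and $\Ccov$, however, the factors $p^{1-\gres}$ and $p^{1-\gcov}$ have positive exponents when $\gres<1$ or $\gcov<1$, so you would need a \emph{lower} bound on $p$. Condition \ref{cond:A2} gives none: any smaller $p$ remains admissible. Shrinking $p$ also does not inflate $v(z)$. The minimal $v(z,r)$ is of order $\sup_{B(z,3r)}f_P/p(z)$, and the factor $p(z)r^d/P(B(z,r))$ in the definition of $v(z)$ cancels it.

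The claimed equivalence actually fails in this regime. Take $d=1$, $\mu_P=\mu_Q=0$, $\Sigma_P=1$, $\Sigma_Q^{-1}=9/40$, $\gres=\gcov=1/2$, $\gbias=1/10$, and replace your $p$ by $p(z)e^{-z^2/4}$.
\begin{itemize}
\item Condition \ref{cond:A2} still holds with the same $\Rpq$, and $v(z)\le Ce^{C\va{z}}$.
\item Up to $e^{O(\va{z})}$ factors, the integrands of $\Cres$ and $\Ccov$ behave like $e^{-z^2/10}$.
\item The integrands of $\Cbias$ and $\Cbias'$ behave like $e^{-3z^2/80}$.
\end{itemize}
So \ref{cond:A2}--\ref{cond:A5} all hold, yet $2\Sigma_Q^{-1}=9/20<1/2=\max\{\gres;\gcov\}\Sigma_P^{-1}$. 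What your bounds do prove is the necessity of $\Sigma_Q^{-1}>\gbias\Sigma_P^{-1}$, and of $2\Sigma_Q^{-1}>\max\{\gres;\gcov\}\Sigma_P^{-1}$ whenever $\max\{\gres;\gcov\}\ge 1$. That covers the critical choice $\gres=\gcov=1$, $\gbias=1/2$.

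Your plan for the degenerate case does not work either. Your lower bound carries a factor $e^{-C\norm{z}}$. Along a null direction of a singular positive semi-definite matrix $\alpha\Sigma_Q^{-1}-\gamma\Sigma_P^{-1}$, it therefore behaves like $e^{bt-C\va{t}}$. This is integrable on both half-lines as soon as $\va{b}<C$, so no divergence follows.

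No divergence can be derived from the hypotheses at all, because they allow $\s^2(z)=e^{-a\norm{z}}$. Take $\mu_P=\mu_Q$, $2\Sigma_Q^{-1}=\Sigma_P^{-1}$, $\gres=1$ and, say, $\gcov=\gbias=0$. The integrand of $\Cres$ is then at most $e^{C'\norm{z}}\s^2(z)$, with $C'$ depending only on $\Sigma_P$, $\Sigma_Q$, $r_0$ and $L$. It is integrable once $a>C'$, although the strict inequality fails. On the boundary the answer depends on exponential rates that the hypotheses leave free.

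Finally, the ``if'' direction also needs the exponents to lie in the ranges imposed by \ref{cond:A3}, e.g.\ $\gres\le1$ and $\gbias\le\ell/d$. The statement's ``$\gres,\gcov,\gbias\in\R_+$'' does not guarantee this.
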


\paragraph{Note} Gaussian vectors give an example of multivariate distributions with unbounded support and no boundary effects. They are
often excluded in the context of regression on unbounded domains because they do not satisfy the local mass property
\citet{zamolodtchikov2026minimax}. With our notation, it is translated by the fact that the multiplicative amplitude $v(z)$ is not bounded 
as $z$ approaches infinity. However, the critical transferability condition $2\Sigma_Q^{-1}>\Sigma_P^{-1}$ is a good illustration of the 
paradigm "the target distribution is allowed to have a heavier tail than the source distribution, as long as it is not twice as heavy" 
as stated in Section \ref{simple}.

\bigskip

Next, we consider the example of vectors with independent coordinates, with one coordinate following a Gamma distribution and the others 
following uniform distributions. This example enables the study of univariate Gamma distributions while not restricting to the case $d=1$.

\begin{prop}[Gamma distributions]\label{gamma}
Let $P\sim\Gamma_1(\mu_P,s_P)$ and $Q\sim\Gamma_1(\mu_Q,s_Q)$, where the distribution $\Gamma_1(\mu,s),\mu\in\Rs,s\in[1,\infty[$ is defined
by the density $z\mapsto\frac{\mu^s}{\Gamma(s)}z_1^{s-1}e^{-\mu z_1}\1{z_1\in\Rs,z_i\in[0,1],i\in\ieu[d][2]}$, representing univariate 
Gamma distributions in an overlying space of dimension $d$. Assume that $\s^2,\,g$ and all its derivatives have polynomial growths on
$\R_+\times[0,1]^{d-1}$. Then the couple $(P,Q)$ satisfies the conditions \ref{cond:A2} and \ref{cond:A3}, for $\gres,\gcov,\gbias\in\R_+$ 
if and only if $2\mu_Q>\max\{\gres;\gcov\}\mu_P$, $\mu_Q>\gbias\mu_P$, $2s_Q>\max\{\gres;\gcov\}s_P$, and $s_Q>\gbias s_P$. In particular, with
the critical choices $\gcov=\gres=1$ and $\gbias=1/2$, the couple $(P,Q)$ satisfies the the conditions \ref{cond:A2} and \ref{cond:A3} if and
only if $2\mu_Q>\mu_P$ and $2s_Q>s_P$.
\end{prop}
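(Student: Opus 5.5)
We prove both directions by writing down explicit choices of $d_P,d_Q,p,\Rpq,v$ for Condition \ref{cond:A2}, and then reducing each integral in Condition \ref{cond:A3} to one-dimensional integrals in the first coordinate $z_1$. The other coordinates live on the compact cube $[0,1]^{d-1}$, where both densities are constant in those coordinates, so only the behaviour in $z_1$ near $0$ and near $\infty$ matters.

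\textbf{Step 1: volume bounds.} We take $d_P=d_Q=d$. Write $f_s(t)=t^{s-1}e^{-\mu t}$, with $\mu,s$ the parameters of $P$ or $Q$. For $z\in\X=\Rs\times[0,1]^{d-1}$ and $r\le r_0$, the ball $B(z,r)$ meets $\X$ in a set of Lebesgue measure at least $c\,r^d$; this uses the corner geometry of the half-cube, so it is a cone-type condition. On a portion of this set of comparable volume, the density $f_P$ is at least $c'\,(z_1\vee r)^{s_P-1}e^{-\mu_P z_1}$, up to constants depending on $r_0$.

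Since $s_P\ge1$, the factor $(z_1\vee r)^{s_P-1}\ge z_1^{s_P-1}$. Hence we may set
\[
p(z):=c\,z_1^{s_P-1}e^{-\mu_P z_1}.
\]
In the same way, $Q(B(z,2r))\le C(z_1+2r_0)^{s_Q-1}e^{-\mu_Q z_1}r^d$, so we may take
\[
\Rpq(z):=C\,\frac{(z_1+2r_0)^{s_Q-1}e^{-\mu_Qz_1}}{z_1^{s_P-1}e^{-\mu_Pz_1}}.
\]
Finally $v(z,r)$ is the ratio of $\max_{B(z,r)}f_P$ to $p(z)$. It is bounded by a polynomial in $(1+z_1)$ times $\big((z_1+r)/z_1\big)^{s_P-1}$. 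Near $z_1=0$ this last factor blows up. The fix is to replace $p(z)$ by a smaller choice such as $c(z_1\wedge r_0)^{s_P-1}\ldots$ only when we are near $0$; alternatively, we accept that $v(z)$ grows like a power of $1/z_1$ and track that power.

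\textbf{Step 2: the integrals, sufficiency.} We substitute into $\Cvar,\Cres,\Ccov,\Cbias$. Polynomial growths of $\s^2,g,\mathcal G_\ell$ (and of $v$ at infinity) do not affect integrability against exponential factors. At infinity the integrands behave like
\begin{itemize}
\item $e^{-(2\mu_Q-\gamma\mu_P)z_1}$ times a polynomial, for $\gamma\in\{\gres,\gcov\}$ (this uses $p^{1-\gamma}\Rpq\,\md Q\approx f_Q^2/f_P^{\gamma}$);
\item $e^{-(\mu_Q-\gbias\mu_P)z_1}$ times a polynomial, for $\Cbias$.
\end{itemize}
These are integrable exactly when $2\mu_Q>\gamma\mu_P$ and $\mu_Q>\gbias\mu_P$.

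Near $z_1=0$ the integrands behave like
\begin{itemize}
\item $z_1^{2(s_Q-1)-\gamma(s_P-1)}$, which is integrable iff $2s_Q-\gamma s_P>1-\gamma$;
\item $z_1^{(s_Q-1)-\gbias(s_P-1)}$, which is integrable iff $s_Q-\gbias s_P>-\gbias$.
\end{itemize}
These are not literally the stated conditions. So the precise choice of $p$ near zero matters, and we will adjust $p$ so that the exponent conditions become $2s_Q>\gamma s_P$ and $s_Q>\gbias s_P$. Concretely, $p$ should be taken comparable to the mass lower bound in a way that produces a factor $z_1^{-1}$ appropriately; I expect the relevant quantity to be $P(B(z,r))$ at $r\asymp z_1$, since balls reaching past the origin behave differently.

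\textbf{Step 3: necessity.} For the "only if" direction we must show that no admissible choice of $p,\Rpq,v$ can do better. We use the inequalities $p(z)r^d\le P(B(z,r))\le C f_P$-bounds, which gives $p\lesssim \max_{B(z,r)} f_P$, and $\Rpq(z)\ge Q(B(z,2r))/P(B(z,r))$ evaluated at small $r$, and then bound the integrands below. The lower bounds on $\s^2,g$ (or at least nonvanishing) and $v\ge1$ then force divergence of the integrals in exactly the complementary regimes. The main obstacle is the boundary analysis at $z_1\to0$: matching the exponent thresholds exactly to $2s_Q>\gamma s_P$ and $s_Q>\gbias s_P$ requires choosing $r$ depending on $z_1$ in both directions. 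This is what I would work out first.
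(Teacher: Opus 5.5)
Your overall route ($d_P=d_Q=d$, $p\asymp f_P$, $\Rpq$ comparable to a density ratio, then one-dimensional integrals as $z_1\to0$ and $z_1\to\infty$) is the Gaussian template that the paper's omitted proof refers to. Your Step~2 exponents are correct.

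\textbf{The gap: adjusting $p$ to reach the stated thresholds.} You plan to adjust $p$ until the thresholds at $0$ read $2s_Q>\gamma s_P$ and $s_Q>\gbias s_P$. This cannot work.
\begin{itemize}
\item Letting $r\to0$ in $p(z)r^d\le P(B(z,r))$ forces $p\le c\,f_P$ almost everywhere. So nothing beats $p\asymp f_P$ in $\Cbias$.
\item A smaller $p$ only yields a sufficient condition. For instance, $p\asymp z_1f_P$ is what reproduces the stated exponents.
\item The converse then cannot be established, because it fails. Take $s_P=3$, $s_Q=5/2$, $\mu_Q=2\mu_P$, $\gres=\gcov=1$, $\gbias=9/10$, with $\ell$ large. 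Then $s_Q>\gbias s_P$ is violated. Yet with $p\asymp f_P$ all constants in (A3) are finite; near $0$ the $\Cbias$ integrand is $\asymp z_1^{-3/10}$.
\end{itemize}
So the discrepancy you noticed is not an artefact of your choice of $p$: the $\Cbias$ threshold $s_Q>\gbias(s_P-1)$ is sharp.

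The discrepancy is immaterial where the paper needs the result. At $\gres=\gcov=1$, $\gbias=1/2$, your conditions become $2s_Q>s_P$ and $2s_Q>s_P-1$, which is exactly the ``in particular'' claim. The paper's later Note ($2s_Q>s_P+1$ once $\Lambda_P\asymp1/z_1$ enters) is likewise computed with $p\propto z_1^{s_P-1}$. Prove this critical case directly. Its necessity half only needs:
\begin{itemize}
\item the $r\to0$ limits $p\le c\,f_P$ and $\Rpq\ge2^df_Q/f_P$;
\item $v\ge1$ and a lower bound on $\s^2$.
\end{itemize}
No $z_1$-dependent choice of $r$ is required.

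\textbf{Step~1 needs repair too.} Even the critical case does not go through as written.
\begin{itemize}
\item Your $\Rpq$ is not comparable to $f_Q/f_P$ near $z_1=0$, although Step~2 assumes it is. Dividing $\sup f_Q\cdot r^d$ by $p(z)r^d$ gives $\Rpq\asymp z_1^{1-s_P}$ there. Then $\Cres$ is finite at $0$ only if $s_Q>s_P-1$, which fails for $s_P=4$, $s_Q=5/2$ even though $2s_Q>s_P$.
\item The fix is to compare $Q(B(z,2r))$ with $P(B(z,r))$ directly, as the paper advises near boundaries. For $r\ge z_1/2$ one has $P(B(z,r))\gtrsim r^{d-1+s_P}$ and $Q(B(z,2r))\lesssim r^{d-1+s_Q}$. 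Hence $\Rpq\asymp\max\{1,z_1^{s_Q-s_P}\}$ near $0$.
\item This $\Rpq$ does not tend to $0$ when $s_Q>s_P$, unlike $f_Q/f_P$; that matters once $\gcov>1$. With $\gres=1$, the integrand $\Rpq f_Q$ is integrable at $0$ if and only if $2s_Q>s_P$.
\item The blow-up of $v(z,r)\asymp((z_1+3r)/z_1)^{s_P-1}$ is harmless. (A3) only involves $v(z)=\sup_r\frac{p(z)r^d}{P(B(z,r))}v(z,r)$, whose prefactor is $\lesssim(z_1/r)^{s_P-1}$ for $r\ge z_1$. So $v$ is bounded on $\X$.
\end{itemize}
Neither of your proposed fixes for $v$ is needed. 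Letting $v$ grow like a power of $1/z_1$ would in fact shift the thresholds through the factor $v^{2DK^*}$ as soon as $L\ge1$.
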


\paragraph{Note} This example features a mix of several regimes. First, we have the Gamma tail regime $z_1\to\infty$ which gives the
intuitive critical transferability condition $2\mu_Q>\mu_P$. Second, we have the power regime $z_1\to 0$ which gives the similar 
transferability condition $2s_Q>s_P$. It shows that we can also handle densities that go to zero near the boundary of the support. 
Finally, we also have boundary effects due to the uniform coordinates $z_i,i\in\ieu[d][2]$ and the regime $z_1\to 0$. However, due to 
the simple geometry of the support, the boundary effects are easily dealt with and do not add any additional conditions. 
See Proposition \ref{uniform} for a discussion of more complex boundary effects.

\paragraph{Note} If we want to apply the results from Section \ref{faster} and therefore need Conditions \ref{cond:A4} and \ref{cond:A5}, 
then we need the slightly stronger condition $2s_Q>s_P+1$. This is due to the fact that power densities of the form $z_1^s$ lack log-regularity
when $z_1\to 0$, which is needed in Section \ref{faster}.

\bigskip

We now address the case where the regression function has a growth towards infinity that is comparable to the decay of the distributions.

\begin{prop}[Pareto distributions with a polynomial regression function]\label{pareto}
Let $P$ and $Q$ follow Pareto distributions in overlying space of dimension $d,\,P\sim\operatorname{Par}_1(\mu_P)$ and $Q\sim
\operatorname{Par}_1(\mu_Q)$, where the distribution $\operatorname{Par}_1(\mu),\mu\in\Rs$ is defined by the density $z\mapsto z_1^{-\mu
-1}/\mu\,\1{z_1\in[1,\infty),z_i\in[0,1],i\in\ieu[d][2]}$. Assume that $\s^2$ is bounded away from zero and infinity and that for all
$z\in[1,\infty)\times[0,1]^{d-1}$ and all $s\in\ieu[\ceil\ell]$, $cz_1^M\le\va{g(z)}\le Cz_1^M$ and $\norm{D^sg(z)}\le Cz_1^M$, where
$M\in\R_+$ and $c,C\in\Rs$. Then the couple $(P,Q)$ satisfies the conditions \ref{cond:A2} to \ref{cond:A5}, for $\gres,\gcov,\gbias\in\R_+$
if and only if $\mu_Q>2M$, $2\mu_Q+1>\gcov(\mu_P+1)$, $2\mu_Q+1>2M+\gres(\mu_P+1)$, and $\mu_Q>M+\gbias(\mu_P+1)$. In particular, with the 
critical choices $\gcov=\gres=1$ and $\gbias=1/2$, the couple $(P,Q)$ satisfies the lists of conditions if and only if $\mu_Q>2M$ and
$2\mu_Q>2M+\mu_P+1$.
\end{prop}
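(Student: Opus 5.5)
We verify each condition by explicit choices of $p,\Rpq,v,\delta,\Lambda_P$ and then reduce every integrability requirement in \ref{cond:A3}--\ref{cond:A5} to one-dimensional integrals in $z_1$ over $[1,\infty)$, with the uniform coordinates $z_2,\dots,z_d$ contributing only bounded factors. Throughout we take $d_P=d_Q=d$ and $r_0\le 1/4$. The density $f_P(z)=z_1^{-\mu_P-1}/\mu_P$ then varies by at most a constant factor on $B(z,3r_0)$ for $z_1\ge 1$, because $(z_1+3r_0)/z_1\le 2$.

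\textbf{Volume conditions.} Any ball $B(z,r)$ with $z\in\X=[1,\infty)\times[0,1]^{d-1}$ and $r\le r_0$ intersects $\X$ in a set of Lebesgue measure at least $c_dr^d$: $\X$ is a product of intervals, so a fixed orthant-type corner of the cube of side comparable to $r$ around $z$ stays inside $\X$. Hence we may take $p(z):=c\,z_1^{-\mu_P-1}$. Bounding $Q(B(z,2r))$ by $\max_{B(z,2r_0)}f_Q$ times the volume of the ball gives $\Rpq(z):=C z_1^{\mu_P-\mu_Q}$. For the third part of \ref{cond:A2}, the joint density of $(X_i-z)/r$ is at most $(\max_{B(z,3r)}f_P\,r^d)^{K^*}$, so $v$ is a bounded constant. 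For \ref{cond:A4}, set $\delta(z):=\min(r_0,\delta(z,\X^c))$. The function $\log f_P$ is Lipschitz with constant $(\mu_P+1)/z_1\le \mu_P+1$, so the computation in the paragraph before Proposition \ref{gaussian} gives a bounded $\Lambda_P$.

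\textbf{Integrability conditions.} Since $\md Q\asymp z_1^{-\mu_Q-1}\md z$, the functions $\s^2$, $v$ and $\Lambda_P$ are bounded, and $g,\mathcal G_\ell,\nabla g\asymp z_1^M$ (the upper bound is on all of them, the lower bound on $g$), each integral becomes $\int_1^\infty z_1^{\beta}\md z_1$ for an explicit exponent $\beta$:
\begin{itemize}
\item $\Cvar$ gives $2M-\mu_Q-1<-1$, that is $\mu_Q>2M$.
\item $\Cres$ gives $-(\mu_P+1)(1-\gres)+\mu_P-\mu_Q-\mu_Q-1<-1$, that is $2\mu_Q+1>\gres(\mu_P+1)$.
\item $\Ccov$ gives $2M-(\mu_P+1)(1-\gcov)+\mu_P-2\mu_Q-1<-1$, that is $2\mu_Q+1>2M+\gcov(\mu_P+1)$.
\item $\Cbias$ and $\Cbias'$ give $M+\gbias(\mu_P+1)-\mu_Q-1<-1$.
\end{itemize}
For the converse, the lower bound $|g|\ge cz_1^M$, the lower bound on $\s^2$, and the matching lower bounds available for $p$ and $\Rpq$ (since $f_P,f_Q$ are comparable to their local extrema) show that each integral diverges exactly when its exponent fails. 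This gives the ``if and only if''.

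\textbf{Two points to flag.}
\begin{itemize}
\item The computed conditions attach $2M$ to the $\gcov$ inequality, whereas the statement attaches it to the $\gres$ inequality. The $\Cres$ integral has no $g$ factor, so the computation as written puts $2M$ in the $\Ccov$ condition. If the lower bound on $|g|$ is needed elsewhere this may swap, so I would recheck which integral carries $g^2$ and reconcile the computation with the statement; the "critical choices" conclusion is the same either way.
\item The main obstacle is \ref{cond:A5}, the boundary integral over $\{\delta<r_0\}$. This region is the slab $z_1<1+r_0$ together with thin layers near the faces of the uniform coordinates, and the layers extend to infinity in $z_1$. Substitute $u=\delta(z)$ on each face layer, so that the measure is $\asymp \md u\,z_1^{-\mu_Q-1}\md z_1$. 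Then
\[
\int_0^{r_0}\exp(-Lp(z)u^{d})\,\md u\lesssim \min\!\left(r_0,(Lp(z))^{-1/d}\right).
\]
Multiplied by $p(z)^{-1/d}$, this yields a bound $\lesssim L^{-1/d}p(z)^{-2/d}$, or alternatively $r_0\,p^{-1/d}$. Interpolating between the two with weight $\theta=d\gbias-1$ gives $L^{1/d-\gbias}p^{-\gbias-1/d}$. Hence $\Cbound<\infty$ provided $\int z_1^{M+(\gbias+1/d)(\mu_P+1)-\mu_Q-1}\md z_1<\infty$.
\end{itemize}

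The extra $1/d$ in this exponent is the delicate point. I would try to show that the $r_0$ face-layer contribution can be absorbed into the bias term via the symmetric-measure argument, which would remove the $1/d$. Failing that, I would try to show that the uniform faces do not really create bias here: the design is symmetric in those coordinates, so the faces could be treated by reflection, and $\delta$ could then be redefined using only the distance to $\{z_1=1\}$. With that redefinition \ref{cond:A5} only involves a compact slab, where all the functions are bounded and the condition holds trivially. This redefinition is what I would ultimately commit to.
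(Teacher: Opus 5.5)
Your choices $p\asymp f_P$, $\Rpq\asymp f_Q/f_P$, bounded $v$ and $\Lambda_P$, and $\delta=\min\{r_0,\delta(z,\X^c)\}$ follow the paper's intended pattern (its proof is omitted as following the Gaussian case). So does your reduction of every integral to $\int_1^\infty z_1^{\beta}\md z_1$. Your first flag is also right. With $\s^2$ bounded, $\Cres$ has no factor $g$, so $2M$ belongs to the $\gcov$ inequality; the statement has $\gres$ and $\gcov$ interchanged there. The critical case $\gres=\gcov=1$ is unaffected.

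The genuine gap is your treatment of \ref{cond:A5}. First, the interpolation is miscomputed. One has $(L^{-1/d}p^{-2/d})^{\theta}(r_0p^{-1/d})^{1-\theta}=r_0^{1-\theta}L^{-\theta/d}p^{-(1+\theta)/d}$, and with $\theta=d\gbias-1$ the power of $p$ is $-\gbias$, not $-\gbias-1/d$. The ``extra $1/d$'' does not exist.

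Second, the remedy you commit to, dropping the lateral faces from $\delta$, is not admissible, because $\delta$ must make \ref{cond:A4} true. Take $z$ at distance $\epsilon<r_0/2$ from the face $\{z_2=0\}$ and $r=2\epsilon$. Then $z-x$ leaves $\X$ for a fixed fraction of $x\in B(0,r)$ while $z+x$ stays inside. So $\va{P_z}(B(0,r))$ is of the same order as $P(B(z,r))$, and \ref{cond:A4} forces $\Lambda_P(z)\gtrsim\epsilon^{1-\ell}$. This is unbounded. For $\ell=2$, the case of Corollary~\ref{better_mse}, it makes $\Cbias'$ diverge logarithmically unless $\nabla g$ vanishes near the faces. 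The reflection heuristic is not available: neither $g$ nor the estimator is symmetrised. Near such a face, $\E[X_i-z\mid\tak]$ has an inward component of order $\tak$, and that is exactly the first-order boundary bias \ref{cond:A5} is there to control.

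The fix is simply to keep $\delta=\min\{r_0,\delta(z,\X^c)\}$ and redo the computation. On a lateral layer, let $u$ be the distance to a face $\{z_i=0\}$ or $\{z_i=1\}$ with $i\ge 2$. Then $p$ depends only on $z_1$, and $\int_0^{r_0}e^{-Lp(z)u^d}\md u\le\min\{r_0,(Lp(z))^{-1/d}\}$. Interpolate with weight $\theta=d\gbias-1\in[0,1]$; $\gbias\in[1/d,2/d]$ is the range in which \ref{cond:A5} is used. This gives
\[
L^{\gbias-1/d}\,p(z)^{-1/d}\min\bigl\{r_0,(Lp(z))^{-1/d}\bigr\}\le r_0^{2-d\gbias}\,p(z)^{-\gbias}.
\]
The slab $1\le z_1<1+r_0$, where $p\asymp 1$, is handled the same way. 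Hence
\[
\Cbound\lesssim 1+\int_1^\infty z_1^{M+\gbias(\mu_P+1)-\mu_Q-1}\md z_1,
\]
which is finite under $\mu_Q>M+\gbias(\mu_P+1)$, a condition $\Cbias$ already imposes. So \ref{cond:A5} adds no constraint, consistent with the paper's remark that such product supports create no extra boundary conditions.

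For the converse direction, what you need is an upper bound on any admissible $p$ and a lower bound on any admissible $\Rpq$, not lower bounds on $p$. Concretely, $p(z)\le r_0^{-d}P(B(z,r_0))\lesssim f_P(z)$, and $\Rpq(z)\ge Q(B(z,2r_0))/P(B(z,r_0))\gtrsim f_Q(z)/f_P(z)$.
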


\paragraph{Note} The previous example shows that the growth of the regression function can be incorporated in the transferability conditions.
\paragraph{Note} In the particular case where the function $g$ is exactly a polynomial in $z_1$ of degree $M>d/2$, we believe that a closer 
look to the estimator without censoring when $\tak>r_0$ could lead to the critical transferability conditions $\mu_Q>2M$ and $2\mu_Q+d>
2M+\mu_P+1$, provided that we have the additional assumption $\mu_P>d/2$. It would allow the target distribution to have a heavier tail than 
the source distribution even when $\mu_Q\approx 2M$. We do not investigate this case further here.

The examples above show that our integrability conditions can be easily interpreted for common distributions. Besides, these conditions 
are local and allow the mixing of several regimes for the densities (decaying towards infinity, non-continuity, going to zero near a
finite point, etc.).

\bigskip

We now discuss the effects of the geometry of the support $\X$ on the distribution conditions, considering the region near the boundary 
of $\X$. Because the conditions are local, to study boundary effects, we may assume without loss of generality that $\X$ is compact. 
We also assume that the functions $\s^2,\,g$, and all its derivatives, are bounded away from zero and infinity.

The two conditions that suffer the most from boundary effects are the first part of Condition \ref{cond:A2}, and Condition \ref{cond:A5}, 
here writing, respectively, $$P(B(z,r))\ge p(z)r^d,\text{ and }\underset{L\in\Rs}\sup L^{\gbias-1/d}\int_{B(\partial\X,r_0)}p(z)^{-1/d}
\exp(-Lp(z)\delta(z)^d)\md Q(z)<\infty.$$ 
Condition \ref{cond:A2} is satisfied when $P$ has a density with respect to the Lebesgue measure on $\X$ that is bounded away from zero,
and when there exists a constant $c\in(0,1]$ such that for almost all $z\in\X$ and all $r\in[0,r_0]$, $$\va{B(z,r)\cap\X}\ge c\va{B(z,r)},$$
leading to the \emph{strong density assumption} \citep{audibert2007fast}. The conjunction of this assumption with the condition
$\underset{L\in\Rs}\sup L^{1/d}\int_\X\exp(-L\delta(z)^d)\md z<\infty$ was studied in \citet{viel2025convergence}, who showed it holds on
general sufficiently regular subsets $\X$ of $\R^d$. We can also consider densities that are not bounded away from zero near the boundary,
as we saw in Proposition \ref{gamma} with Gamma distributions. Finally, it is possible to relax the condition $\va{B(z,r)\cap\X}\ge c\va{
B(z,r)}$, as well as the upper bound conditions on the densities, as can be seen in the following example.

\begin{prop}\label{uniform}
Consider the set $\X:=\{z\in[0,1]^d\mid\forall i\in\ieu[d][2],z_i\le z_1^s\}$, where $s\ge 1$. Let $\mu_P,\mu_Q>-s(d-1)$ and let $P,Q$
follow the distributions with respective densities $f_P:z\mapsto(s(d-1)+\mu_P+1)z_1^{\mu_P}$ and $f_Q:z\mapsto(s(d-1)+\mu_Q+1)z_1^{\mu_Q}$ 
with respect to the Lebesgue measure on $\X$. Suppose that the functions $\s^2$ and $g$ are bounded away from zero and infinity on $\X$,
as well as $\mathcal G_\ell$ on $\Conv(\X)$. Let $\gamma\in\R_+$, then the couple $(P,Q)$ satisfies Conditions \ref{cond:A2} to \ref{cond:A5}, 
for $\gres=\gcov=\gamma,\gbias=\gamma/2$ if and only if $2\mu_Q+((2-\gamma)s-1+\gamma)(d-1)>\gamma\mu_P-1$. 
\end{prop}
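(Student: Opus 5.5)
The plan is to compute, for a point $z\in\X$ with first coordinate $t:=z_1\in(0,1]$, the masses $P(B(z,r))$ and $Q(B(z,r))$ up to multiplicative constants that depend only on $d,s,\mu_P,\mu_Q$. From these we read off admissible functions $p,\Rpq,v$ in Condition \ref{cond:A2}, and we then reduce every integral in Conditions \ref{cond:A3} and \ref{cond:A5} to a one-dimensional integral $\int_0^1t^{\beta}\md t$. Finiteness of each such integral is equivalent to $\beta>-1$, and both directions of the equivalence come from this. By the choice of $\X$, the section of $\X$ at height $u$ is the cube $[0,u^s]^{d-1}$. Since $\va{z_i}\le t^s\le t$, the geometry of $B(z,r)\cap\X$ has three regimes.
\begin{itemize}
\item For $r\le t^s$, a fixed fraction of the ball lies in $\X$ (a corner of a cube of side $\gtrsim r$), so $\va{B(z,r)\cap\X}\asymp r^d$.
\item For $t^s\le r\le t/2$, the ball cuts a slab of length $\asymp r$ with cross-section $\asymp t^{s(d-1)}$, so $\va{B(z,r)\cap\X}\asymp r\,t^{s(d-1)}$.
\item For $r\ge t/2$, the ball essentially contains, and is contained in, $\{u\le t+r\}$, so $P(B(z,r))\asymp\int_0^{t+r}u^{\mu_P+s(d-1)}\md u\asymp r^{\mu_P+s(d-1)+1}$. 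This is finite because $\mu_P>-s(d-1)$.
\end{itemize}
In the first two regimes the density is $\asymp t^{\mu_P}$ on the ball. The same description holds for $Q$ with $\mu_Q$ in place of $\mu_P$.

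Next I minimise $P(B(z,r))/r^d$ over $r\in[0,r_0]$. This gives $d_P=d_Q=d$ and $p(z)\asymp t^{\mu_P+(s-1)(d-1)}$, up to replacing $t^{a}$ by $t^a\wedge 1$ according to the sign of $a$ in the last regime. Comparing $Q(B(z,2r))$ with $P(B(z,r))$ in each regime (the shapes coincide, only the weights $t^{\mu_Q}$ versus $t^{\mu_P}$ differ) gives $\Rpq(z)\asymp t^{\mu_Q-\mu_P}$, again up to the large-$r$ regime. Substituting, and using $\md Q\asymp t^{\mu_Q+s(d-1)}\md t$ after integrating over the section, the integral $\Ccov$ (and likewise $\Cres$) becomes $\int_0^1t^{\beta}\md t$ with
$$\beta=(1-\gamma)(\mu_P+(s-1)(d-1))+2\mu_Q-\mu_P+s(d-1).$$
The condition $\beta>-1$ rearranges exactly to $2\mu_Q+((2-\gamma)s-1+\gamma)(d-1)>\gamma\mu_P-1$. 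The integral $\Cbias$ with $\gbias=\gamma/2$ produces half of the same exponent shifted by the measure term, which gives the same condition, and $\Cvar$ is trivially finite. For the converse, the lower bounds on $P(B(z,r))$ are matched by upper bounds of the same order (and symmetrically for $Q$), so no better choice of $p$ or $\Rpq$ exists up to constants. Hence divergence of $\int_0^1t^\beta\md t$ forces every admissible choice to violate \ref{cond:A3}.

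For Conditions \ref{cond:A4} and \ref{cond:A5}, I take $\delta(z)=\delta(z,\X^c)\wedge r_0$, and $\Lambda_P(z)\asymp 1/t$ from the log-Lipschitz norm of $t^{\mu_P}$. The integral $\Cbias'$ then carries an extra factor $t^{-1}$, which is harmless on the relevant range; I will check that it is dominated by the $\Cbias$ condition or absorbed into it. For $\Cbound$, I integrate over the layer $\{\delta<r_0\}$ using the coarea structure of the cusp: at height $t$, the set of points at distance $\le\delta$ from the lateral faces has measure $\asymp\delta\,t^{s(d-2)}$ for $\delta\le t^s$. The Laplace-type integral $\int\exp(-Lp\delta^d)$ then gives $L^{-1/d}p^{-1/d}$ times a $t$-power, and the supremum over $L$ reduces again to a power integral in $t$ with the same threshold.

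The step I expect to be the main obstacle is the multiplicative amplitude $v$ in the third part of \ref{cond:A2}. The sup of the density over the ball is $t^{\mu_P}$, while $p(z)\asymp t^{\mu_P+(s-1)(d-1)}$, so in the middle regime $v(z)$ grows like $t^{-(s-1)(d-1)}$. With the factor $p r^d/P(B(z,r))$ included in its definition, $v$ may still be unbounded when $s>1$, and the powers $v^{2DK^*}$ in \ref{cond:A3} would then shift the threshold. I will first try to bound $v(z,r)$ directly, estimating the joint probability of $K^*$ rescaled points by $(\sup f_P\,r^d)^{K^*}\va{\mathcal A}$ restricted to the thin slab, and check whether the factor $\sup_r p r^d/P(B(z,r))$ compensates. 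If it does not, I will restrict the claim to the cases where $v$ enters only through $v^{1-\gamma}$ (namely $L=0$ with $D=0$), which is the situation in which the stated threshold is exactly reproduced.
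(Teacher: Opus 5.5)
Your route is the paper's: three regimes for $B(z,r)\cap\X$, then $p\asymp z_1^{\mu_P+(s-1)(d-1)}$ and $\Rpq\asymp z_1^{\mu_Q-\mu_P}$, then reduction to powers of $z_1$. Your large-$r$ mass $r^{\mu_P+s(d-1)+1}$ is in fact more accurate than the paper's display.

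\textbf{The amplitude $v$.} The step you leave open cannot be closed in the direction the statement needs. Take $\mathcal A=B(0,\e)^{K^*}$ in the third part of \ref{cond:A2} and let $\e\to0$. This gives $v(z,r)p(z)\ge f_P(z)$ for every $r$, hence, for every admissible choice,
\[
v(z)\ \ge\ \sup_{r\in[0,r_0]}\frac{r^d f_P(z)}{P(B(z,r))}\ \gtrsim\ z_1^{-(s-1)(d-1)}\qquad(\text{take } r=z_1/2).
\]
\begin{itemize}
\item This bound does not involve $p$, so the factor $p(z)r^d/P(B(z,r))$ compensates nothing. Restricting to the slab is useless, since $\mathcal A$ may be chosen inside it.
\item For $\mu_P<0$ it is worse: $f_P$ is unbounded on $B(z,3r)\cap\X$ as soon as $3r>z_1$, so no finite $v(z,r)$ exists. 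Your claim that the sup of the density over the ball is $t^{\mu_P}$ needs $\mu_P\ge0$.
\item With $v\asymp z_1^{-(s-1)(d-1)}$ (true for $\mu_P\ge0$), the natural choice gives, for $\mu_Q\le\mu_P$, $\Cres\asymp\int_0 z_1^{2\mu_Q-\gamma\mu_P+s(d-1)-2DK^*(s-1)(d-1)}\md z_1$.
\item For $L=0$, the factor $v^{1-\gamma}$ exactly cancels the $(1-\gamma)(s-1)(d-1)$ gained from $p^{1-\gamma}$. So your fallback reproduces the stated inequality only when $\gamma=1$ or $(s-1)(d-1)=0$.
\item For $L\ge1$, $\Cvar$ is not trivially finite. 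It needs $\mu_Q+s(d-1)-2DK^*(s-1)(d-1)>-1$, which fails for instance for $d=s=2$, $L=1$, $\mu_P=\mu_Q=0$, although the stated inequality holds there.
\end{itemize}
The paper's proof simply asserts $v(z)\sim1$ and that $\Cvar$ is trivially finite, so it does not supply this step either. The bound above shows that assertion is false for $s>1$, $d\ge2$. When $(s-1)(d-1)>0$, your argument establishes the claimed equivalence for \ref{cond:A2}--\ref{cond:A3} only in the critical case $L=0$, $\gamma=1$ (with $\mu_P\ge0$), where $v$ enters none of the four integrals.

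\textbf{The converse.} Your converse has a separate hole. Sharpness of the lower bounds on $P(B(z,r))$ only shows that $p$ cannot exceed $p^*(z):=\inf_rP(B(z,r))/r^d$. Any smaller $p$ is admissible and does not increase $v$. For $\gamma<1$, the factor $p^{1-\gamma}$ in $\Cres$ and $\Ccov$ rewards a smaller $p$, and only $\Cbias$ penalises it.

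So ``no better choice of $p$ exists'' is false. The converse requires optimising this trade-off, for example over $p\asymp z_1^{\mu_P+(s-1)(d-1)+b}$ with $b\ge0$, and this strictly enlarges the admissible region. Take $s=1$, $d=2$, $L=0$, $\gamma=1/2$, $\mu_P=4$, $\mu_Q=-1/4$. The stated inequality fails, and your exponent gives $\int_0t^{-3/2}\md t=\infty$. Yet $p\asymp z_1^6$ yields $\Cres,\Ccov\asymp\int_0t^{-1/2}\md t$ and $\Cbias\asymp\int_0t^{-3/4}\md t$, both finite.

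Only at $\gamma=1$ do $\Cres$ and $\Ccov$ not involve $p$. There $\Cbias$, which prefers $p$ large, makes the natural choice optimal, and your ``only if'' argument is valid.
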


With the critical choice $\gamma=1$, the list of conditions is satisfied if and only if $2\mu_Q>\mu_P-s(d-1)-1$, while the strong density
assumption $\exists c\in(0,1],\forall z\in\X,\forall r\in[0,1],\va{B(z,r)\cap\X}\ge c\va{B(z,r)}$ is not satisfied when $s>1$, and the 
target density is not bounded when $\mu_Q<0$.

This last example is particular since the function $p$ in Condition \ref{cond:A2} will not be the density function $f_P$, up to a positive
constant, as in the previous examples, but it also depends on the geometry of the support through the exponent $s$. See the proof of Proposition
\ref{uniform} for the details.

\subsection{Distributions on a manifold support}\label{manifold}

In this section, we study the conditions \ref{cond:A2} to \ref{cond:A5} when the covariate distributions $P$ and $Q$ on the Euclidean
space $\R^d$ have intrinsic dimensions smaller than $d$. This can happen, for example, when the support $\X$ of the distributions is a 
submanifold of $\R^d$ of dimension $d_0$. Note that the estimators we consider are adaptive in the sense that no knowledge of the 
support $\X$ or its dimension $d_0$ is required for the computations. 

We first consider the example of the Euclidean unit sphere $\Sp:=\{z\in\R^d\mid\sum_{i=1}^dz_i^2=1\}$. It is a smooth compact
$(d-1)$-dimensional submanifold of $\R^d$, with no boundary. Assume that the distributions $P$ and $Q$ are absolutely continuous with
respect to the Haar measure on $\Sp$, with densities that are bounded away from infinity, and a density for $P$ that is bounded away from
zero. Then there exist constants $c,C,C_{P,Q}\in\Rs$ such that for all $z\in\Sp$ and all $r\in[0,r_0]$, $$cr^{d-1}\le P(B(z,r))\le C
r^{d-1},\text{ and}\quad Q(B(z,2r))\le C_{P,Q}P(B(z,r)),$$ ensuring the validity of Condition \ref{cond:A2} with the intrinsic dimensions
$d_Q=d_P=d-1$. If the density of $P$ is Lipschitz continuous on the sphere, then Condition \ref{cond:A4} is satisfied, setting 
$\delta(z):=r_0$. Finally, Condition \ref{cond:A5} is trivially satisfied, since the submanifold $\Sp$ has no boundary, and Condition
\ref{cond:A3} is satisfied, as the source density is bounded away from zero on the compact submanifold $\Sp$.

\bigskip

We can also consider non smooth manifolds, or manifolds with boundaries. Assume that the distributions $P$ and $Q$ have density with
respect to the natural surface measure on the unit sphere for the infinity norm, defined as $\X_\infty:=\{z\in\R^d\mid\max\{\va{z_i},
i\in\ieu[d]\}=1\}$. Assume further that the two densities are bounded away from infinity and that the density of $P$ is bounded
away from zero and Lipschitz continuous on $\X_\infty$. Then the intrinsic dimensions of $P$ and $Q$ are $d_Q=d_P=d-1$. The only major
difference compared to the previous situation on the Euclidean unit sphere $\Sp$ is the presence of a boundary. Here, Condition 
\ref{cond:A4} is satisfied when $\delta(z):=\delta(z,\partial\X_\infty)$ is the distance between the point $z$ and the boundary
$\partial\X_\infty:=\{z\in\X_\infty\mid\exists i\neq j,\,\va{z_i}=\va{z_j}=1\}$. However, due to the simple geometry of the boundary,
we can show that Condition \ref{cond:A5} is satisfied.

\bigskip

Next, we consider the case of non compact manifolds. We use the description of submanifolds of $\R^d$ through graphs. Let $\X$ be the
global submanifold $\X:=\{(a,\varphi(z))\mid a\in\R^{d_0}\}$, where $\varphi:\R^{d_0}\to\R^{d-d_0}$ is a function with continuous
first-order derivatives on $\R^{d_0}$. Then the regression function $g$ on $\X$ depends only on the function $g_0:\R^{d_0}\to\R$,
defined as $g_0(a):=g(a,\varphi(a))$. Similarly, we replace the study of the distributions $P$ and $Q$, supported on the submanifold
$\X$, by that of the distributions $P_0$ and $Q_0$, supported on $\R^{d_0}$, where for a Borel set $A\subset\R^{d_0}$, we have set
$P_0(A):=P(\{(a,\varphi(a))\mid a\in A\})$, and similarly for $Q_0$. We can then write, for almost all $a\in\R^{d_0}$ and all $r\in
[0,r_0]$, setting $L(a):=\norm{\nabla\varphi}_{\infty,B_{d_0}(a,r_0)}\in\R_+$ and $z:=(a,\varphi(a))\in\X$, $$P_0\left(B_{d_0}\left(a,
\frac r{1+L(a)}\right)\right)\le P(B(z,r))\le P_0(B_{d_0}(a,r)),$$ where the norm on $\R^d$ is assumed to be any Minkowski norm.
Assuming that $P_0$ and $Q_0$ have density with respect to the Lebesgue measure on $\R^{d_0}$, we can obtain bounds of the form
$p_0(a)r^{d_0}\le P_0(B_{d_0}(a,r))$, as in Section \ref{full_dim}, which implies that the distributions $P$ and $Q$ have intrinsic
dimensions $d_Q=d_P=d_0$. In addition, the integrability conditions can be fully expressed on $\R^{d_0}$, using the function $g_0$ and the
distributions $P_0$ and $Q_0$. For example, the last part of Condition \ref{cond:A3} becomes $$\int_{\R^{d_0}}(\va{g_0(a)}+\norm{D^\ell
g_0(a)})\left(\frac{1+L(a)^{d_0}}{p_0(a)}\right)^{\gbias}\md Q_0(a)<\infty.$$

\bigskip

In general, because the volume bounding and the integrability conditions are local, they are preserved by local maps that transport 
the source and target probability measures. For a general submanifold $\X$ of $\R^d$, we fix $z\in\X$ and we assume that there exist an
orthogonal transform $\Omega_z$ of $\R^d$, an open subset $A_z$ of $\R^{d_0}$, and an $L_z$-Lipschitz function $\varphi_z:A_z\to\R^{d-
d_0},\,L_z\in\R_+$, such that $\X\cap B(z,2r_0)=\{\Omega_z(a,\varphi_z(a))\mid a\in A_z\}$. We can then replace the study of the
regression function $g$ and the distributions $P$ and $Q$, locally around $z$, by that of the function $g_0:A_z\to\R$, defined by
$g_0(a)=g(\Omega_z(a,\varphi_z(a)))$, and the measures $P_0$ and $Q_0$ on $A_z$, defined by $P_0(C):=P(\{\Omega_z(a,\varphi_z(a))\mid a
\in C\})$, and similarly for $Q_0$. For example, setting $z=\Omega_z(a_z,\varphi_z(a_z))$, if the measure $P_0$ satisfies, for all
$r\in[0,r_0],\,p_0(a_z)r^{d_0}\le P_0(B_{d_0}(a_z,r))\le p_0(a_z)v_0(a_z)r^{d_0}$, which can be shown as in Section \ref{full_dim},
then we have, for all $r\in[0,r_0]$, $$\frac{p_0(a_z)}{(1+L_z)^{d_0}}r^{d_0}\le P(B(z,r))\le p_0(a_z)v_0(a_z)\,r^{d_0}.$$

\bigskip

The above derivations show that when the covariate vectors $X$ and $X^*$ have support on a $d_0$-dimensional submanifold $\X_0$ of $\R^d$, 
we can take $d_Q=d_P=d_0$ in Condition \ref{cond:A2}, reducing the conditional bias of the $k$-NN matching and local polynomial
estimators from $n^{-\ell/d}$ to $n^{-\ell/d_0}$. In fact, we can obtain similar results if $X$ and $X^*$ are not exactly supported on 
$\X_0$, but only up to some small noise. This means that the results for the rates of the final estimators extend to the case where the
distance from the covariate vector $X$ to the submanifold $\X_0$ is no larger, in distribution, than $n^{-1/d_0}$. In other words, we 
assume that we can write $X=X^{(0)}+n^{-1/d_0}\eta$, where $X^{(0)}$ is supported on $\X_0$, and $\eta$ is a random variable on $\R^d$
having a finite moment of order $d_0$. In that case, it is possible that the distribution of $X^{(0)}$ has intrinsic dimension $d_0$,
yet the distribution of $X$ does not satisfy the first part of Condition \ref{cond:A2} with $d_P=d_0$. However, the main results of this
paper are still valid with the intrinsic dimension $d_0$, because we again have, for $Q$-almost all $z\in\X$ and all $\la\in[0,d_0]$,
$$\E[\tak^\la]\le C\left(\E[\hat\tau_{k+1}^{(0)}(z)^\la]+n^{-\la/d_0}\right),$$ for some constant $C\in\Rs$, where $\hat\tau_{k+1}^{
(0)}(z)$ is the $(k+1)$th order statistic of the sample $(\norm{X_i^{(0)}-z})_{i\in\ieu}$. It implies that the conditional bias of the
final estimators has the rate $n^{-\ell/d_0}$.
In contrast, when the distance from $X$ to the submanifold $\X_0$ is greater than $n^{-1/d_0}$ in distribution, it means that the 
noise is too large and therefore adds more dimensions to the covariate vector $X$.

\section{Possible extensions}\label{extensions}

\textbf{Labels:} In the estimator $\hat h_0(z)$, the final labels $h(X_i,Y_i)$ can be replaced by $h(z,Y_i)$. This approach was considered
in \citet{portier2024scalable}. In this case, the assumptions on the regression function $g$ are replaced by assumptions on the function 
$x\mapsto\E[h(z,Y)\mid X=x]$ with a fixed $z$, where $x\in B(z,3r_0)$. The two approaches will have the exact same rates of convergence 
and will only slightly differ in the constants. The same applies to the local polynomial estimator $\hat h_L(z)$ in section 
\ref{polynomial}, where the final labels $h(X_i,Y_i)$ can be replaced by $h(z,Y_i)$ with only minor changes. Of course, when $h(x,y)$ is 
a function of $y$, as in the context of average treatment effects, the two approaches are strictly equivalent.
\bigskip

\textbf{Mixed distributions:} The results of this paper naturally extend to the case where the covariate vectors $X_1,\dots,X_n$ are
independent but not identically distributed, and similarly for the target vectors $X_1^*,\dots,X_m^*$. In this case, setting $P_i,\,
i\in\ieu$ the distribution of $X_i$, and $Q_j,\,j\in\ieu[m]$ the distribution of $X_j^*$, the distributions $P$ and $Q$ in Conditions
\ref{cond:A2} to \ref{cond:A5} now represent the mixed distributions $P:=\meann P_i$ and $Q:=\meanm Q_j$, respectively.
In particular, the first part of Condition \ref{cond:A2}, writing $P(B(z,r))\ge p(z)r^d$, translates the idea that sufficiently many
vectors $X_i,\,i\in\ieu$, have some probability to fall inside the ball $B(z,r)$.
\bigskip

\textbf{Optimal bandwidth:} In the low smoothness regime, that is when the regularity parameter $\ell$ is smaller than $d_P/2$, then
the $k$-NN matching and local polynomial estimators $\hat e_L(h)$ are not rate-optimal. Their rates of convergence are limited by 
their squared conditional bias term, which behaves like $n^{-2\ell/d_P}$, as soon as we have a good control of the transferability between
the source and target distributions. The pointwise estimators $\hat h(z)$ we gave are kernel estimators with the local bandwidth
$\tak$, which behaves like $n^{-1/d_P}$ when $k$ is fixed. Alternatively, we can use for $\hat h(z)$ a kernel estimator with the
non-adaptive local bandwidth $\tak^{2d_P/(2\ell+d_P)}$, which behaves like $n^{-2/(2\ell+d_P)}$, creating a very localised pointwise
estimator. In this case, the mean-squared error of the final estimator $\hat e(h)$ would have the rate $n^{-4\ell/(2\ell+d_P)}$, which
is an improvement over the rate $n^{-2\ell/d_P}$ when $\ell<d_P/2$.

\section{Numerical Experiments}\label{experiments}

In this section, we provide results for numerical experiments that show how the estimators perform with unbounded manifold supports, and
analyse how the transferability between the source and target distributions affects the convergence rates.
We compare the following methods: \begin{description}
\item[Matching:] The $1$-nearest neighbour matching estimator $\hat e_0(h)$ we considered.
\item[Sampling:] The $1$-nearest neighbour sampling estimator, that corresponds to the estimator $\hat e_0(h)$ where the final labels
$h(X_i,Y_i)$ are replaced by $h(z,X_i)$.
\item[Poly-$L$-M:] The $k$-NN local polynomial estimator $\hat e_L(h)$ we considered with $k:=2K^*(d,L)$. This parameter $k$ is lower than
the theoretical threshold $(2D+1)K^*$, as \citet{holzmann2026multivariate} reported that smaller $k$'s tended to result in better
performance. We include more neighbours that the minimal number $K^*$ for which the least-squares problem is well-posed for more
stability.
\item[Poly-$L$-S:] The $k$-NN local polynomial estimator where the final labels $h(X_i,Y_i)$ are replaced by $h(z,Y_i)$.
\item[KMM:] The \textit{Kernel Mean Matching method} \citep{huang2006correcting} ; \citep{gretton2009covariate} to weight source 
observations.
\item[KLIEP:] The \textit{Kullback-Leibler Importance Estimation Procedure method} \citep{sugiyama2007direct} to weight source 
observations.
\item[Oracle:] The hypothetical estimator $\meanm h(X_j^*,Y_j^*)$ with an oracle access to the target labels $Y_j^*$.
\end{description}

\bigskip

We conducted experiments in two setups described below. For each setup, the first $d_0$ coordinates of the covariate vector are 
independent realisations of a univariate distribution, and then the random vector is completed into a vector of dimension $d$ by applying
to the first coordinates functions such as $x\mapsto x^2$, $x\mapsto\cos(x)$, or $x\mapsto 1/(x^2+1)$.
\begin{description}
\item[Exponential-Sin] The covariate vector $X_{1:d_0}$ is made up of independent realisations of the exponential distribution 
$\mathcal E(1)$ for the target sample and $\mathcal E(\mu_P)$ for the source sample. The labels are generated according to the rule
$y=f(x)+\e$, where $f(x)=\sin(x_1)$, and $\e\sim\mathcal N(0,1)$ is independent from $x$. We define the function $h(x,y):=\cos(x_2)y+1$.
The value of the integral we want to estimate is $1.25$.
\item[Normal-Poly] The covariate vector $X_{1:d_0}$ is made up of independent realisations of the normal distribution $\mathcal N(0,1)$ for
the target sample and $\mathcal N(0,1/\mu_P)$ for the source sample. The labels are generated according to the rule $y=f(x)+\e$, where
$f(x)=\sum_{i=1}^{d_0}x_i^2$, and $\e\sim\mathcal N(0,1)$ is independent from $x$. We define the function $h(x,y):=y\sum_{i=1}^{d_0}x_i^2$.
The value of the integral we want to estimate is $d_0(d_0+2)/4$.
\end{description}
Note that the functions $f$ and $h$ we chose for the \textbf{Normal-Poly} setup are not bounded, but their growths are negligible 
compared to the decay of the source distribution $\mathcal N(0,1)$, reducing the integrability conditions in \ref{cond:A3} to transferability
conditions between the source and target measures only.

The first experiment illustrates the way the bias of the estimators behaves with different overlying dimensions $d$ and intrinsic
dimensions $d_0$. For this experiment, we fix $\mu_P=1/2$, and we let the sample sizes $m=n$ vary. The results are shown in Figures
\ref{fig:biasexp} and \ref{fig:biasnorm}. All methods have biases that get smaller as the sample sizes increase. The slopes for the
logarithmic bias depend on the methods, and they are steeper for the \textbf{Poly-$2$-M} method, as can be expected from the 
theoretical analysis. We can also see that while Subfigures \ref{fig:biasexp3v12}, \ref{fig:biasexp6v12} and \ref{fig:biasexp12v12}
show the same overlying dimension $d$, the rates in Subfigure \ref{fig:biasexp3v12} are closer to that of Subfigure 
\ref{fig:biasexp3v3}, which shares the same intrinsic dimension $d_0=3$, than to that of Subfigure \ref{fig:biasexp6v12} or 
\ref{fig:biasexp12v12}. In contrast, the biases of the estimators become larger as the intrinsic dimension increases, as the theory 
suggests. The same behaviour can be observed in the \textbf{Normal-Poly} setup in Figure \ref{fig:biasnorm}.

\begin{figure}
\centering\begin{subfigure}[t]{0.43\linewidth}\centering
\includegraphics[width=\linewidth]{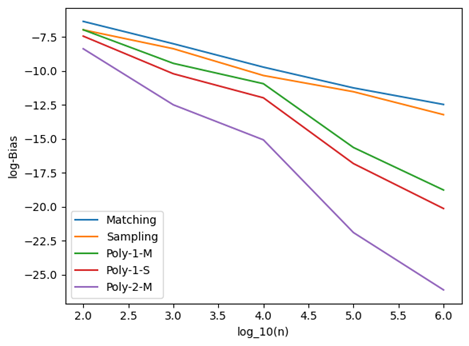}
\caption{Dimensions $d_0=3,\,d=3$}
\label{fig:biasexp3v3}
\end{subfigure}
\hfill\begin{subfigure}[t]{0.43\linewidth}\centering
\includegraphics[width=\linewidth]{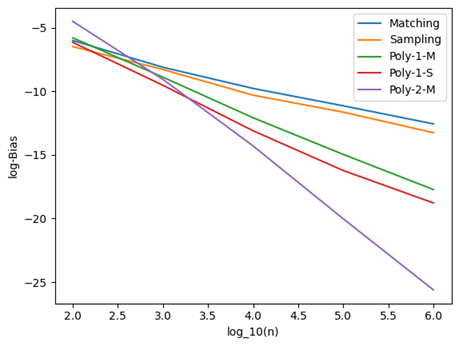}
\caption{Dimensions $d_0=3,\,d=12$}
\label{fig:biasexp3v12}
\end{subfigure}
\begin{subfigure}[t]{0.43\linewidth}\centering
\includegraphics[width=\linewidth]{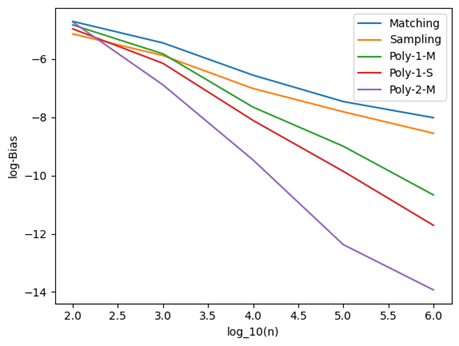}
\caption{Dimensions $d_0=6,\,d=12$}
\label{fig:biasexp6v12}
\end{subfigure}
\hfill\begin{subfigure}[t]{0.43\linewidth}\centering
\includegraphics[width=\linewidth]{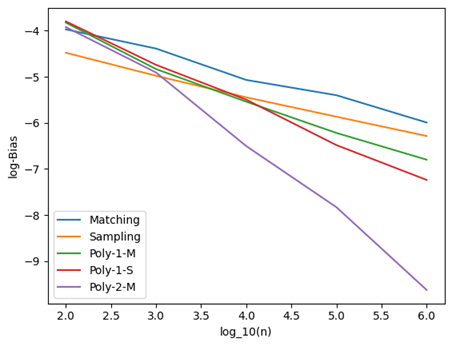}
\caption{Dimensions $d_0=12,\,d=12$}
\label{fig:biasexp12v12}
\end{subfigure}
\caption{Bias rates, Setup Exponential-Sin}
\label{fig:biasexp}
\end{figure}

\begin{figure}
\centering\begin{subfigure}[t]{0.43\linewidth}\centering
\includegraphics[width=\linewidth]{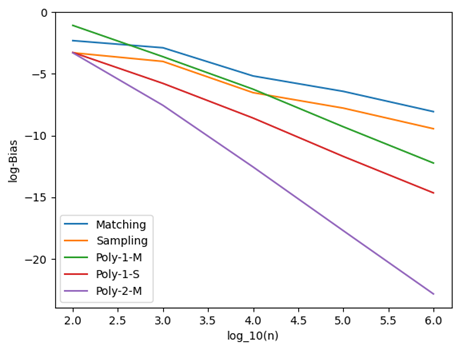}
\caption{Dimensions $d_0=3,\,d=3$}
\label{fig:biasnorm3v3}
\end{subfigure}
\hfill\begin{subfigure}[t]{0.43\linewidth}\centering
\includegraphics[width=\linewidth]{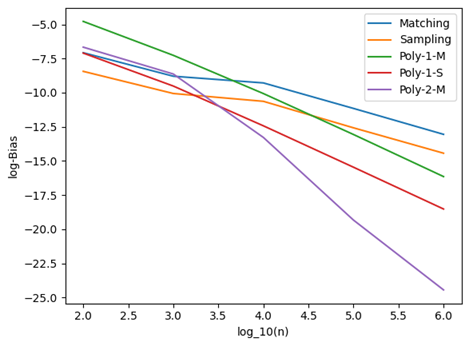}
\caption{Dimensions $d_0=3,\,d=12$}
\label{fig:biasnorm3v12}
\end{subfigure}
\begin{subfigure}[t]{0.43\linewidth}\centering
\includegraphics[width=\linewidth]{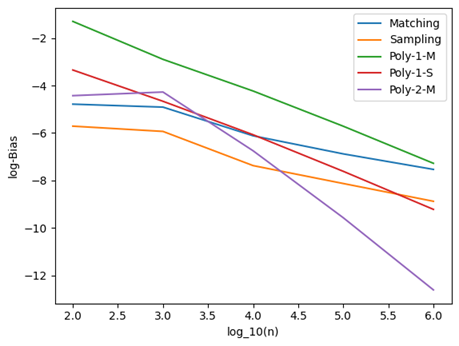}
\caption{Dimensions $d_0=6,\,d=12$}
\label{fig:biasnorm6v12}
\end{subfigure}
\hfill\begin{subfigure}[t]{0.43\linewidth}\centering
\includegraphics[width=\linewidth]{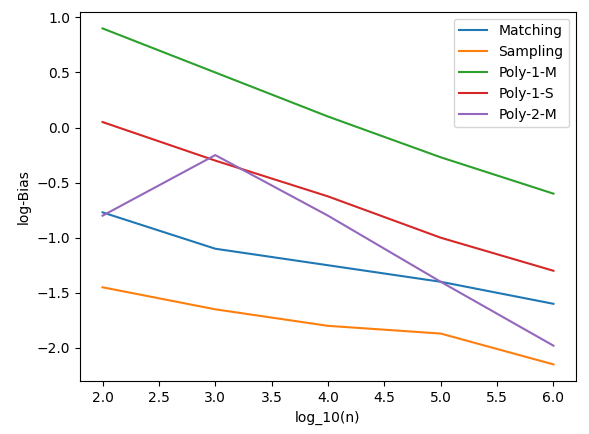}
\caption{Dimensions $d_0=12,\,d=12$}
\label{fig:biasnorm12v12}
\end{subfigure}
\caption{Bias rates, Setup Normal-Poly}
\label{fig:biasnorm}
\end{figure}

In the second experiment, we analyse the effect of the transferability between the source and the target measures on the performances of the
final estimators for the expectation. For this experiment, we fix the dimensions $d_0=d=2$ and the sample sizes $m=n=10^3$, and we let the
parameter $\mu_P$ for the source distribution vary. The results can be found in Figure \ref{fig:distor}. The source and target
distributions are the same when $\mu_P=1$, that is, for the value $0$ on the $x$-axis. In Figure \ref{fig:distorexpsin}, we can see that
all estimators have similar performances that vary little until the critical point $\mu_P=2$ (the value $1$ on the $x$-axis). Starting
from this value, only the ideal \textbf{Oracle} estimator is not affected by the transferability, since it does not depend on the source
sample. However, the performances of all the other estimators seem to decay after this critical point. A similar curve appears on the
right side of Figure \ref{fig:distornorlin}. It happens when $\mu_P\ge 2$, that is, when the condition $2=2\mu_Q>\mu_P$ is no longer
satisfied, as we have forecast in our analysis; see Propositions \ref{gaussian} and \ref{gamma}.

\begin{figure}
\centering\begin{subfigure}[t]{0.43\linewidth}\centering
\includegraphics[width=\linewidth]{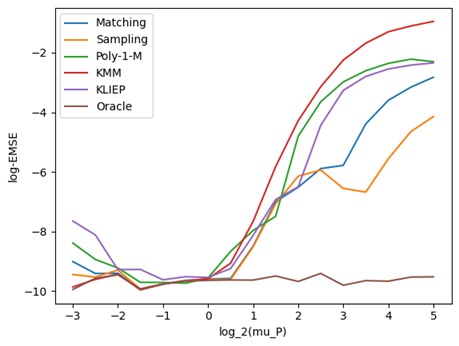}
\caption{Setup Exponential-Sin}
\label{fig:distorexpsin}
\end{subfigure}
\hfill\begin{subfigure}[t]{0.43\linewidth}\centering
\includegraphics[width=\linewidth]{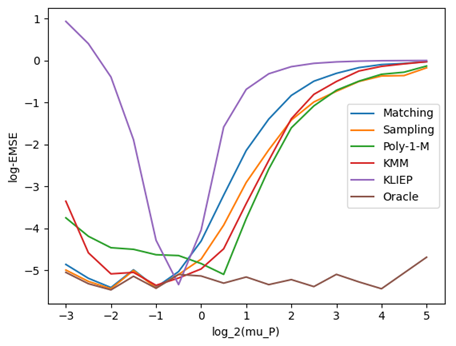}
\caption{Setup Normal-Poly}
\label{fig:distornorlin}
\end{subfigure}
\caption{Transferability and mean-squared errors}
\label{fig:distor}
\end{figure}

\section{Conclusion}\label{conclusion}

We showed that the $k$-NN matching and local polynomial estimators for covariate shift adaptation can be applied to covariate vectors 
with densities that can approach zero and infinity, and whose support can be an unbounded non-regular set. We provided optimal rates of 
convergence that depend on the intrinsic dimensions of the covariates and on the transferability between the source and target
probability measures. For future research, one could investigate how to generalise the results outside the framework of $k$-NN algorithms 
or study the optimisation of the censor radius.

\bigskip

\section{Proofs}\label{proofs}

In this section, we give the proofs for the main theorems in the paper. The proofs of Theorems \ref{simple_mse} and \ref{simple_poly_mse}
can be deduced from those of Theorems \ref{variance} and \ref{bias}, as simpler cases.

\subsection{Proof of Theorem \ref{minimax}}

We derive minimax lower bounds using Le Cam's method. Let $\Phi:\R\to\R_+$ denote the function defined by $\Phi(x):=C\exp(-x^2/2)$, where 
the constant $C\in\Rs$ is chosen small enough so that for all $s\in\ieu[\ell][0],\,\norm{D^s\Phi}_\infty\le s!$.
Now fix $\gamma\in[0,1]$ and $\Ccov\in\Rs$. We consider the function $g_1:\R^d\to\R_+$ defined by $g_1(z):=\Lambda\Phi(z_1-r)$, for a
constant $\Lambda\in\Rs$ and a location parameter $r\in\R$ to be chosen later. This regression function is compared with the regression function
$g_0:z\mapsto 0$. We set the source density at $f_P:z\mapsto e^{-z_1}$, and the target density at $f_Q:z\mapsto\alpha e^{-\alpha z_1}$, where
$\alpha:=\frac\gamma 2+\frac 1{\ln(n)}\in(0,1)$, both on the convex support $\X:=\R_+\times[0,1]^{d-1}$. The second and third points in the
definition of $\Sigma(\gamma,\Ccov)$ are satisfied. 
The transferability condition writes $$\Lambda^2\int_{\R^d}\frac{f_Q(z)^2}{f_P(z)^\gamma}\md z=\Lambda^2\alpha^2\int_0^\infty e^{-2z_1/\ln(n)}
\md z_1=(\Lambda\alpha)^2\frac{\ln(n)}2\le\Ccov,\text{ and}$$ 
$$\left(\Lambda\int_{\R^d}\frac{f_Q(z)}{f_P(z)^{\gamma/2}}\md z\right)^2=\Lambda^2\alpha^2\left(\int_0^\infty e^{-z_1/\ln(n)}\md z_1
\right)^2=(\Lambda\alpha\ln(n))^2=\Ccov,$$ setting $\Lambda^2:=(\alpha\ln(n))^{-2}\Ccov\le 1$ for $n$ large enough.
Next, we control the Kullback-Leibler divergence between the distributions $\Pb_1:=(P_{Y,1}\cdot P)\otimes Q$ and $\Pb_0:=(P_{Y,0}\cdot P)
\otimes Q$, corresponding to the regression functions $g_1$ and $g_0$, respectively. The only difference between $\Pb_1$ and $\Pb_0$ lies 
in the conditional distributions $P_{Y,1}$ and $P_{Y,0}$ given $X=z$, and these conditional distributions are Gaussian distributions with
the same variance $\Lambda^2$ and with respective locations $g_1(z)$ and $g_0(z)$. Therefore, we have
$$\operatorname{KL}(\Pb_1,\Pb_0)=\int_{\R^d}\frac{(g_1(z)-g_0(z))^2}{2\Lambda^2}f_P(z)\md z=\frac 12\int_0^\infty\Phi(z_1-r)^2
e^{-z_1}\md z_1\le\frac{e^{-r}}2P_\R(\Phi^2),$$ where $P_\R(\Phi^2):=\int_\R\Phi(x)^2e^{-x}\md x=C^2e^{1/4}\sqrt{2\pi}$. 
Setting the parameter $r$ such that $P_\R(\Phi^2)e^{-r}:=\frac{2\ln(2)}n$, we therefore have $\operatorname{KL}(\Pb_1,\Pb_0)\le\ln(2)/n$,
and applying Le Cam's theorem \citep{yu1997} yields 
$$\underset{\hat E_n}\inf\;\underset{(P_{X,Y},Q)\in\Sigma(\gamma,\Ccov)}\sup\E[(\hat E_n-e(g))^2]\ge\frac{(e(g_1)-e(g_0))^2}{32}.$$ However, 
$$e(g_1)-e(g_0)=\Lambda\int_\X g_1(z)f_Q(z)\md z=\Lambda\alpha e^{-\alpha r}\int_{-r}^\infty\Phi(x)e^{-\alpha x}\md x.$$ 
Since the function $\Phi$ is non-negative and non-increasing on $\R_+$, we have $$\alpha\int_{-r}^\infty\Phi(x)e^{-\alpha x}\md x\ge
\int_0^\infty\Phi(x)\,\alpha e^{-\alpha x}\md x\ge P(\Phi),$$ where $P(\Phi):=\int_{\R_+}\Phi(x)e^{-x}\md x>0$. As a consequence, we find
$$\underset{\hat E_n}\inf\;\underset{(P_{X,Y},Q)\in\Sigma(\gamma,\Ccov)}\sup\E[(\hat E_n-e(g))^2]\ge\frac{P(\Phi)^2\Lambda^2}{32}\,e^{-2
\alpha r}=\frac{P(\Phi)^2}{32}\Ccov(\alpha\ln(n))^{-2}\,e^{-2\alpha r},$$ hence the result, setting $\mathcal K:=\frac{e^{-2}P(\Phi)^2}{
32P_\R(\Phi^2)^2}$.

\subsection{Proof of Theorem \ref{variance}}

We divide the conditional variance into two parts, $V_{m,n}:=F_{m,n}+E_n$, where \begin{align*}
F_{m,n} & :=\hat e_L(h)-\E[\hat e_L(h)\mid (X_i,Y_i)_{i\in\ieu}],\text{ and} \\ E_n & :=\E[\hat e_L(h)\mid (X_i,Y_i)_{i\in\ieu}]-
\E[\hat e_L(h)\mid (X_i)_{i\in\ieu}].
\end{align*}
First, we have $F_{m,n}=\meanm\hat h_L(X_j^*)-\int_\X\hat h_L(z)\md Q(z)$, and therefore,
$$\E[F_{m,n}^2\mid (X_i,Y_i)_{i\in\ieu}]=\frac 1m\Var\left(\hat h_L(X^*)\mid (X_i,Y_i)_{i\in\ieu}\right).$$ As a consequence, 
$\E[F_{m,n}^2]\le\frac 1m\E\left[\hat h_L(X^*)^2\right]$. Applying Jensen's inequality yields \begin{align*}
\E[F_{m,n}^2] & \le\frac 1m\int_\X\E\left[e_0^\top M(z)^{-1}\sum_{i\in I_k(z)}(k\norm g_{\infty,B(z,r_0)\cap\X}^2+\s^2(z))\zeta(z,X_i)
\zeta(z,X_i)^\top M(z)^{-1}e_0^\top\1{\tak\le r_0}\right]\md Q(z) \\ & \le\frac 1m\int_\X\left(\norm g_{\infty,B(z,r_0)\cap\X}^2+\frac{
\s^2(z)}k\right)\E[ke_0^\top M(z)^{-1}e_0\1{\tak\le r_0}]\md Q(z) \\ & \le\frac{C_\X}m\int_\X\left(\norm g_{\infty,B(z,r_0)\cap\X}^2+
\frac{\s^2(z)}k\right)v(z)^{2DK^*}\md Q(z),\text{ applying Lemma }\ref{key}.
\end{align*}
For the second part of the conditional variance, we write $$E_n=\int_\X e_0^\top M(z)^{-1}\sum_{i\in I_k(z)}(h(X_i,Y_i)-g(X_i))
\zeta(z,X_i)\,\1{\tak\le r_0}\md Q(z),$$ and, therefore, \begin{multline*}
\E[E_n^2\mid (X_i)_{i\in\ieu}]=\int_{\X^2}e_0^\top M(y)^{-1}\sum_{i\in I_k(y)\cap I_k(z)}\Var(h(X_i,Y_i)\mid(X_j)_{i\in\ieu})\,
\zeta(y,X_i)\zeta(z,X_i)^\top \\ M(z)^{-1} e_0\,\1{\max\{\hat\tau_{k+1}(y);\tak\}\le r_0}\md Q^{\otimes 2}(y,z).    
\end{multline*}
Because $e_0^\top M(y)^{-1}\zeta(y,X_i)\zeta(z,X_i)M(z)^{-1}e_0\le(e_0^\top M(y)^{-1}\zeta(y,X_i))^2/2+(\zeta(z,X_i)^\top M(z)^{-1}
e_0)^2/2$, by the symmetry between $y$ and $z$, it yields \begin{multline*}
\E[E_n^2]\le\int_{\X^2}\min\{\s^2(y);\s^2(z)\}\E[e_0^\top M(z)^{-1}e_0\mathbf 1(\max\{\tak;\hat\tau_{k+1}(y)\}\le r_0, \\ \tak+\hat\tau_{
k+1}(y)\ge\norm{y-z})]\md Q^{\otimes 2}(y,z),\text{ and}
\end{multline*}
\begin{align*}
\E[E_n^2] & \le E_{n,1}+E_{n,2},\text{ where} \\ E_{n,1} & :=\int_\X\s^2(z)\,\E[\E[e_0^\top M(z)^{-1}e_0\mid\tak]\,Q(B(z,2\tak))\1{\tak
\le r_0}]\md Q(z),\text{ and} \\ E_{n,2} & :=\int_{\X^2}\s^2(y)\E[\E[e_0^\top M(z)^{-1}e_0\1{\tak\le\hat\tau_{k+1}(y)}\mid\hat\tau_{
k+1}(y)]\,\1{\hat\tau_{k+1}(y)\le r_0,z\in B(y,2\hat\tau_{k+1}(y))}\md Q^{\otimes 2}(y,z).
\end{align*}
Applying the second part of Condition \ref{cond:A2} and Lemma \ref{key}, we find the following. \begin{align*} 
E_{n,1} & \le\frac{C_\X}k\int_\X\s^2(z)\,v(z)^{2DK^*}\,\E[\tak^{d_Q-d_P}P(B(z,\tak))\1{\tak\le r_0}]\,\Rpq(z)\md Q(z) \\ & 
\le\frac{2C_\X(k+1)}{k(n+1)}r_0^{d_Q-\gres d_P}\int_\X\s^2(z)\,v(z)^{2DK^*}\left(\frac{(n+2)p(z)v(z)}{k+1}\right)^{1-
\gres}\Rpq(z)\md Q(z),
\end{align*}
where we used Lemma \ref{NN_radius} $(2)$. Similarly, applying Lemma \ref{key_cov} yields \begin{align*}
E_{n,2} & \le\frac{C_\X}k\int_\X\s^2(y)\,v(y)^{2DK^*}\E[Q(B(y,2\hat\tau_{k+1}(y)))\1{\hat\tau_{k+1}(y)\le r_0}]\md Q(y) \\ & \le
\frac{8C_\X}{k^{1-\gres}(n+1)^{\gres}}r_0^{d_Q-\gres d_P}\int_\X\s^2(y)\,v(y)^{2DK^*+1-
\gres}p(y)^{1-\gres}\Rpq(y)\md Q(y),
\end{align*}
where the integral that appears in both upper bounds on $E_{n,1}$ and $E_{n,2}$ is finite by the second part of Condition \ref{cond:A3}.

\subsection{Proof of Theorem \ref{bias}}

The conditional bias writes $B_n=\int_\X(\beta_n(z)-g(z)\1{\tak>r_0})\md Q(z)$, where $$\beta_n(z):=\meank(g(X_i)-g(z))\,\1{\tak\le 
r_0}.$$ We consider the decomposition $$\E[B_n^2]\le 2\left(\fbox 1+\fbox 2+\fbox 3+\fbox 4\right),\text{ where}$$ \begin{align*}
\fbox 1 & :=\E\left[\int_{\X^2}\beta_n(y)\beta_n(z)\1{\hat\tau_{k+1}(y)+\tak<\norm{y-z}}\md Q^{\otimes 2}(y,z)\right], \\
\fbox 2 & :=\E\left[\int_{\X^2}\beta_n(y)\beta_n(z)\1{\hat\tau_{k+1}(y)+\tak\ge\norm{y-z}}\md Q^{\otimes 2}(y,z)\right], \\
\fbox 3 & :=\E\left[\int_{\X^2}g(y)g(z)\,\1{\min\{\hat\tau_{k+1}(y);\tak\}>r_0,\,\norm{y-z}>2r_0}\md Q^{\otimes 2}(y,z)\right], \\
\fbox 4 & :=\E\left[\int_{\X^2}g(y)g(z)\1{\min\{\hat\tau_{k+1}(y);\tak\}>r_0,\,\norm{y-z}\le 2r_0}\md Q^{\otimes 2}(y,z)\right]. 
\end{align*}
We first deal with the low-distance case. In the double integral $\int_{\X^2}\beta_n(y)\beta_n(z)\md Q^{\otimes 2}(y,z)$, we distinguish 
between $\hat\tau_{k+1}(y)+\tak\ge\norm{y-z}$ and $\hat\tau_{k+1}(y)+\tak<\norm{y-z}$. In the latter case, using the conditional 
independence from Lemma \ref{cond_ind}, we find $$\fbox 1=\int_{\X^2}\E[\E[\beta_n(y)\mid\hat\tau_{k+1}(y)]\,\E[\beta_n(z)\mid\tak]\,
\1{\hat\tau_{k+1}(y)+\tak<\norm{y-z}}]\md Q^{\otimes 2}(y,z).$$ 

Applying Taylor's formula to the regression function $g$, we know that $g(z+t)=\zeta(z,z+t)^\top G(z)+R_L(z,z+t)$, where $G_\alpha(z):=\frac{
\partial_\alpha g(z)}{\alpha!},\,\alpha\in N_L$ and $$R_\ell(z,z+t):=\frac 1{(\ceil\ell-2)!}\int_0^1(1-s)^{\ceil\ell-2}(D^{\ceil\ell-1}
g(z+st)-D^{\ceil\ell-1}g(z))\left(t^{\otimes(\ceil\ell-1)}\right)\,\mathrm ds.$$
Then \begin{align*}
\hat G_L(z) & =\underset{\gamma\in\R^{N_L}}\argmin\sum_{i\in I_k(z)}[(G(z)-\gamma)^\top\zeta(z,X_i)+(h-g)(X_i,Y_i)+R_\ell(z,X_i)]^2 \\ & 
=G(z)+M^{-1}(z)\sum_{i\in I_k(z)}(R_\ell(z,X_i)+(h-g)(X_i,Y_i))\,\zeta(z,X_i). 
\end{align*}
Thus, we have \begin{align*}
\beta_n(z) & :=(\E[e_0^\top\hat G_L(z)\mid (X_i)_{i\in\ieu}]-g(z))\1{\tak\le r_0} \\ & =\sum_{i\in I_k(z)}R_\ell(z,X_i)\,e_0^\top 
M^{-1}(z)\zeta(z,X_i)\1{\tak\le r_0}.
\end{align*}
However, using the Cauchy-Schwarz inequality, we have \begin{align*}
\E[\beta_n(z)\mid\tak] & \le\E\left[ke_0^\top M(z)^{-1}\sum_{i\in I_k(z)}R_\ell(z,X_i)^2\zeta(z,X_i)\zeta(z,X_i)^\top M(z)^{-1}e_0\1{\tak
\le r_0}\mid\tak\right]^{1/2} \\ & \le\frac{\mathcal G_\ell(z)\tak^\ell}{\ceil{\ell-1}!}\,\E[ke_0^\top M(z)^{-1}e_0\mid\tak]^{1/2}
\1{\tak\le r_0} \\ & \le\frac{C_\X^{1/2}\,\mathcal G_\ell(z)\,v(z)^{DK^*}}{\ceil{\ell-1}!}\,\tak^\ell\,\1{\tak\le r_0},
\text{ applying Lemma }\ref{key}.
\end{align*}
The same applies to $\E[\beta_n(y)\mid\hat\tau_{k+1}(y)]$, so that applying the negative correlation from Lemma \ref{negative_correlation}
followed by Lemma \ref{NN_radius} (1) yields the following. \begin{align*}
\fbox 1 & \le\frac{C_\X}{(\ceil{\ell-1}!)^2}\left(\int_\X\mathcal G_\ell(z)\,v(z)^{DK^*}\E[\tak^\ell\mid\tak\le r_0]\md Q(z)\right)^2 \\ &
\le\frac{C_\X}{(\ceil{\ell-1}!)^2}\left(\frac{k+1}{n+1}\right)^{2\gbias}\left(2\Gamma(2+\floor{\gbias})
r_0^{\ell-\gbias d_P}\int_\X\mathcal G_\ell(z)\,v(z)^{DK^*}\,p(z)^{-\gbias}\md Q(z)\right)^2.
\end{align*}
The integral in the lase expression above is finite by the fourth part of Condition \ref{cond:A3}. Note that we replaced the
exponent $\ell$ by the lower exponent $d_P\gbias$ so that the last integrability condition in \ref{cond:A3} is less restrictive.

We now consider the case where the NN-balls intersect, that is, when $\hat\tau_{k+1}(y)+\tak\ge\norm{y-z}$. In this case, depending on 
whether $\hat\tau_{k+1}(y)\le\tak$ or $\hat\tau_{k+1}(y)>\tak$, we write
\begin{align*}
\fbox 2 & :=\E\left[\int_{\X^2}\beta_n(y)\beta_n(z)\1{\hat\tau_{k+1}(y)+\tak\ge\norm{y-z}}\md Q^{\otimes 2}(y,z)\right] \\ & \le
\int_{\X^2}\frac{\mathcal G_\ell(z)^2}{(\ceil{\ell-1}!)^2}\E[ke_0^\top M(z)^{-1}e_0\,\tak^{2\ell}\1{\max\{\tak;\hat\tau_{k+1}(y)\}\le
r_0,\tak+\hat\tau_{k+1}(y)\ge\norm{y-z}}]\md Q^{\otimes 2}(y,z) \\ & \le B_{n,1}+B_{n,2},
\end{align*}
where, first, \begin{align*}
B_{n,1} & :=\int_\X\frac{\mathcal G_\ell(z)^2}{(\ceil{\ell-1}!)^2}\E[ke_0^\top M(z)^{-1}e_0\,\tak^{2\ell}\,Q(B(z,2\tak))\,\1{\tak\le
r_0}]\md Q(z) \\ & \le\frac{C_\X}{(\ceil{\ell-1}!)^2}\int_\X\mathcal G_\ell(z)^2\,v(z)^{2DK^*}\E[\tak^{2\ell+d_Q-d_P}\,P(B(z,\tak))\,
\1{\tak\le r_0}]\Rpq(z)\md Q(z),
\end{align*}
using Lemma \ref{key} and the second part of Condition \ref{cond:A2}. Applying Lemma \ref{NN_radius} (1) or (2), we obtain
$$B_{n,1}\le\frac{2C_\X}{(\ceil{\ell-1}!)^2}\ceil{\gcov}!\,r_0^{2\ell+d_Q-\gcov d_P}\left(\frac{k+1}{n+1}
\right)^{\gcov}\int_\X\mathcal G_\ell(z)^2\,v(z)^{2DK^*+1}\,p(z)^{1-\gcov}\,\Rpq(z)\md Q(z),$$
where the last integral is finite by the third part of Condition \ref{cond:A3}. On the other hand, \begin{multline*}
B_{n,2}:=\int_{\X^2}\frac{\mathcal G_\ell(y)^2}{(\ceil{\ell-1}!)^2}\E[\E[ke_0^\top M(z)^{-1}e_0\1{\tak\le\hat\tau_{k+1}(y)}\mid\hat\tau_{
k+1}(y)]\hat\tau_{k+1}(y)^{2\ell} \\ \1{\hat\tau_{k+1}(y)\le r_0,z\in B(y,2\tau_{k+1}(y))}]\md Q^{\otimes 2}(y,z).
\end{multline*} 
Similarly, applying Lemma \ref{key_cov}, we find \begin{align*} 
B_{n,2} & \le\frac{C_\X}{(\ceil{\ell-1}!)^2}\int_\X\mathcal G_\ell(y)^2\,v(y)^{2DK^*}\E[\hat\tau_{k+1}(y)^{2\ell}Q(B(y,2\hat\tau_{
k+1}(y)))\1{\hat\tau_{k+1}(y)\le r_0}]\md Q(y) \\ & \le\frac{2C_\X}{(\ceil{\ell-1}!)^2}\ceil{\gcov}!\,r_0^{2\ell+d_Q-
\gcov d_P}\left(\frac{k+1}{n+1}\right)^{\gcov}\int_\X\mathcal G_\ell(y)^2\,v(y)^{2DK^*+1}\,p(y)^{1-
\gcov}\,\Rpq(y)\md Q(y),
\end{align*}
which gives the same term as for $B_{n,1}$.

We are now left with the high-distance conditional bias $\int_{\X^2}g(y)g(z)\1{\min\{\hat\tau_{k+1}(y);\tak\}>r_0}\md Q^{\otimes 2}(y,z)$. 
We distinguish this time between $\norm{y-z}\le 2r_0$ or $\norm{y-z}>2r_0$. In the latter case, we can apply Lemma 
\ref{negative_correlation} on negative correlation to obtain the following.\begin{align*}
\fbox 3 & \le\left(\int_\X\va{g(z)}\Pb(\tak>r_0)\md Q(z)\right)^2 \\ & \le\left(e^{1/4}\int_\X\va{g(z)}\exp\left(-\frac{(n+1)p(z)}{8
(k+1)}r_0^{d_P}\right)\md Q(z)\right)^2,\text{ applying Lemma }\ref{NN_radius}\,(3), \\ & \le\left(e^{1/4}(8r_0^{-d_P})^{\gbias}\left(
\frac{k+1}{n+1}\right)^{\gbias}\int_\X\va{g(z)}p(z)^{-\gbias}\md Q(z)\right)^2,
\end{align*}
using the inequality $e^{-x}\le x^{-\gbias}$ for all $x\in\Rs$. Finally, when $\norm{y-z}\le 2r_0$, we can write
\begin{align*}
\fbox 4 & \le 2\int_\X g(z)^2\,Q(B(z,2r_0))\,\Pb(\tak>r_0)\md Q(z) \\ & \le 2\int_\X g(z)^2\,P(B(z,r_0))\,\E[\tak^{d_Q-d_P}\1{\tak>r_0}]
\,\Rpq(z)\md Q(z),\text{ using Condition }\ref{cond:A2}, \\ & \le 2r_0^{d_Q-d_P}\int_\X g(z)^2\,\E[P(B(z,\tak))\1{\tak>r_0}]
\Rpq(z)\md Q(z) \\ & \le 2e^{1/4}r_0^{d_Q-d_P}\int_\X g(z)^2\,\frac{k+1}{n+1}\exp\left(-\frac{(n+2)p(z)}{8(k+1)}r_0^{d_P}
\right)\,\Rpq(z)\md Q(z),\text{ applying Lemma }\ref{NN_radius}\,(3), \\ & \le 2e^{1/4}r_0^{d_Q-\gcov d_P}
8^{\gcov-1}\left(\frac{k+1}{n+1}\right)^{\gcov}\int_\X g(z)^2\,p(z)^{1-\gcov}\,\Rpq(z)\md Q(z),
\end{align*}
where the integral is finite by the third part of Condition \ref{cond:A3}.

\subsection{Proof of Theorem \ref{better_bias}}

The high-distance case is dealt with exactly the same way as in the proof of Theorem \ref{bias}. Similarly, the proof for the quantity $\fbox 2$
in Theorem \ref{bias} still holds, replacing the regularity parameter $\ell$ by $1$. For the main quantity $\fbox 1$, we now apply a 
Taylor expansion on $g$. We have for all $i\in I_k(z)$, $$\va{g(X_i)-g(z)-\nabla g(z)^\top(X_i-z)}\le\mathcal G_\ell(z)\norm{X_i-
z}^\ell.$$ But conditionally on $\hat\tau_{k+1}(z)$ and $I_k(z)$, the random variables $(X_i)_{i\in I_k(z)}$ are independent and
identically distributed, and their common distribution is the trace of $P$ on the open ball $B^\circ(z,\tak)$, thanks to the second part
of Lemma \ref{cond_ind}. Because we assumed that for $Q$-almost all $z\in\X,\,P$ does not charge the spheres of center $z$ and radius
$r\in[0,r_0]$, the common conditional distribution is also the trace of $P$ on the closed ball $B(z,\tak)$. Therefore, using the notation
from Condition \ref{cond:A4}, if $\tak\le\delta(z)$, then for all $i\in I_k(z)$, almost-surely, \begin{align*}
\E[X_i-z\mid I_k(z),\tak] & =\frac 1{P(B(z,\tak))}\int_{B(z,\tak)}(x-z)\md P(x) \\ & =\frac 1{2P(B(z,\tak))}\int_{B(0,\tak)}x\md P_z(x),
\\ \norm{\E[X_i-z\mid I_k(z),\tak]} & \le\frac\tak 2\frac 1{P(B(z,\tak))}\va{P_z}(B(0,\tak)) \\ & \le\Lambda_P(z)\tak^\ell,
\text{ using Condition }\ref{cond:A4}.
\end{align*}
It implies that $\va{\E[\beta_n(z)\mid\tak]}\1{\tak\le\delta(z)}\le(\mathcal G_\ell(z)+\norm{\nabla g(z)}\Lambda_P(z))\,\tak^\ell$, 
while on the other hand $\va{\E[\beta_n(z)\mid\tak]}\1{\delta(z)\le\tak\le r_0}\le\norm{\nabla g(z)}\tak$.
Because these two upper bounds are non-decreasing functions of $\tak$ times the indicator $\1{\tak\le r_0}$, we can apply Lemma
\ref{negative_correlation} on negative correlation. The remaining computations for the quantity where $\tak\le\delta(z)$ are the same as 
in the proof of Theorem \ref{bias}. For the quantity where $\delta(z)<\tak$, we first apply Lemma \ref{NN_radius} (4) to obtain 
$$\E[\tak\1{\delta(z)<\tak}\mid\tak\le r_0]\le 2e^{1/4}\left(\frac{k+1}{(n+1)p(z)}\right)^{1/d_P}\exp\left(-\frac{(n+1)p(z)}{8(k+1)}
\delta(z)^{d_P}\right),$$ and then we are in a position to use Condition \ref{cond:A5} with $L:=\frac{n+1}{k+1}$, which completes the proof.

\subsection{Proof of Theorem \ref{ate_poly}}

We first check the validity of Conditions \ref{cond:A1} to \ref{cond:A5}. It is sufficient to check the volume bounding
conditions and the integrability conditions. For $w\in\{0;1\}$, set $S_w$ the conditional distribution of $X$ given $W=w$. Then using
Condition \ref{cond:A2'}, we have, for $S$-almost all $x\in\X$ and all $r\in[0,2r_0]$, $$\eta_w(x)p(x)r^{d_0}\le S_w(B(x,r))\le
(1-\eta_{1-w}(x))p(x)v(x)r^{d_0},\quad\text{and}$$ $$S_{1-w}(B(x,2r))\le 2^{d_0}\frac{v(x)(1-\eta_w(x))}{\eta_w(x)}S_w(B(x,r)).$$
Using the functions $x\mapsto\eta_w(x)p(x)$ and $x\mapsto v(x)(1-\eta_w(x))/\eta_w(x)$, the integrability conditions that appear
in Condition \ref{cond:A3} then match the ones from Condition \ref{cond:A3'}. We are then in a position to apply Theorems \ref{variance} 
and \ref{bias} twice, first by setting $P$ the conditional distribution $S_0$ of $X$ given $W=0$ and $Q$ the conditional
distribution $S_1$ of $X$ given $W=1$, and second by inverting the roles of $W=0$ and $W=1$. We then obtain conditional rates of
convergence given the random variable $W$, depending on the conditional sample sizes, that is, on the number of observations for each
treatment $N_w:=\Card(\{i\in\ieu[N]\mid W_i=w\}),\,w\in\{0;1\}$. However, because for all $w\in\{0;1\}$, we have $$0<\int_\X\eta_w(x)
\md S(x)\le\Pb(W=w)\le\int_\X(1-\eta_{1-w}(x))\md S(x)<1,$$ we know that for all $a\in\Rs,\,\E[N^aN_w^{-a}\1{N_w>0}]$ remains bounded as
$N$ goes to infinity, as shown in \citet{viel2025convergence}.

\subsection{Proof of Proposition \ref{gaussian}}

We first check the volume bounding conditions, choosing for $\norm\cdot$ the Euclidean norm on $\R^d$. Set $\la_P$ and $\la_Q$ the largest
eigenvalues of $\Sigma_P^{-1}$ and $\Sigma_Q^{-1}$, respectively. Let $z\in\R^d$, if $\norm{z-\mu_P}\le r_0$, then we can lower bound $f_P$ 
on $B(z,r_0)$ be a positive constant, for instance $\frac 1{\sqrt{2\pi\det(\Sigma_P)}}\exp(-2\la_Pr_0^2)=\underset{B(\mu_P,2r_0)}\inf f_P>0$.

If $\norm{z-\mu_P}>r_0$, then the inequality $f_P\ge f_P(z)$ holds on an intersection of halfspaces in $B(z,r_0)$, with volume 
$2^{-d}\va{B(0,r_0)}$. It means that the first part of Condition \ref{cond:A2} is satisfied with the function
$$p:z\mapsto f_P(z)\,\vab\,(\exp(-2\la_Pr_0^2)\wedge 2^{-d}),$$ which is the density function $f_P$ up to some constant.
Similarly, the maximum of $f_P$ over $B(z,3r_0)$ is upper bounded by $f_P(z)\exp(6\la_Pr_0\norm z)$, so that we can set $v:z\mapsto\exp(6\la_P
r_0\norm z)$ up to some positive constant. The same can be applied to the target density $f_Q$, so that we can set $\Rpq:z\mapsto\frac{f_Q(z)}{
f_P(z)}\exp(4\la_Qr_0\norm z)$. 
We now discuss the integrability conditions in \ref{cond:A3}. The first integrability condition is trivially satisfied, for all choices
of parameters. With the assumptions on $\s^2$ and $g$, and with the choice of the multiplicative amplitude $v$ above, the second and third
integrability conditions are equivalent to the integrability of the function $z\mapsto\frac{f_Q(z)^2}{f_P(z)^\gamma}\exp(B\norm z)$ on $\R^d$,
for some constant $B\in\R$. The latter function is integrable if and only if the function $z\mapsto\exp\left(-\frac 12z^\top(2\Sigma_Q^{-1}-
\gamma\Sigma_P^{-1})z\right)$ is integrable, which happens if and only if the symmetric matrix $2\Sigma_Q^{-1}-\gamma\Sigma_P^{-1}$ is 
positive semi-definite. The same situation occurs for the fourth integrability condition.
Next, the local log-Lipschitz constant $\Lambda_P(z)$ can be taken as the supremum of the norm of the function $\nabla\ln(f_P)$ over
$B(z,r_0)$, which is proportional to the $\norm z$. It ensures the finiteness of the integral $\Cbias'$ under the same condition as for the
integral $\Cbias$. Finally, note that Condition \ref{cond:A5} is automatically satisfied as the density function $f_P$ is regular on the whole
space $\R^d$, and therefore there are no boundary issues. More precisely, we have set $\delta(z):=r_0$ for all $z\in\R^d$.

\bigskip

The proofs of Propositions \ref{gamma} and \ref{pareto} are skipped as they follow the same pattern as for Gaussian distributions.

\subsection{Proof of Proposition \ref{uniform}}

We first discuss the volume bounding conditions, choosing for $\norm\cdot$ the supremum norm on $\R^d$. Set $\alpha:=s(d-1)$ and $\beta:=
\alpha-d+1=(s-1)(d-1)\ge 0$. Below, we will write $\phi(z,r)\sim\psi(z,r)$ whenever there exist positives constants $c<C$ not depending on $z$
and $r$ such that $c\psi(z,r)\le\phi(z,r)\le C\psi(z,r)$.
Let $z\in\X$ and $r\in[0,r_0]$, then a direct computation gives $$P(B(z,r))\sim\left\{\begin{array}{ccc}z_1^{\mu_P}r^d & \text{ if } & r\le 
z_1^s \\ z_1^{\mu_P+\alpha}r & \text{ if } & z_1^s\le r\le z_1 \\ z_1^{\mu_P}r^{\alpha+1} & \text{ if } & z_1\le r\end{array}\right..$$
As a consequence, the first part of Condition \ref{cond:A2} is satisfied with the function $p(z)\sim z_1^{\mu_P+\beta}$. The same computations
show that we can also set $\Rpq(z)\sim f_Q(z)/f_P(z)\sim z_1^{\mu_Q-\mu_P}$ and $v(z)\sim 1$.
We now discuss the integrability conditions in \ref{cond:A3}. The first integrability condition is trivially satisfied. For the second and
third integrability conditions, the function $\Rpq\,p^{1-\gamma}\,f_Q$ is integrable over $\X$ if and only if the function $z_1\mapsto z_1^{
\alpha+2\mu_Q-\mu_P+(1-\gamma)(\mu_P+\beta)}$ is integrable when $z_1\to 0$, which occurs if and only if $2\mu_Q-\gamma\mu_P+\alpha+(1-\gamma)
\beta>-1$. Similarly, we can show that the function $p^{-\gamma/2}f_Q$ is integrable over $\X$ if and only if $\alpha+\mu_Q-\gamma(\mu_P+
\beta)/2>-1$. We find the system $$\left\{\begin{array}{ccc}2\mu_Q-\gamma\mu_P+(2s-1)(d-1)-\gamma\beta & > & -1 \\ 2\mu_Q-\gamma\mu_P+2s(d-1)-
\gamma\beta & > & -2\end{array}\right.,$$ where the second line is redundant, and therefore, the system holds if and only if we have the
inequality $2\mu_Q+((2-\gamma)s-1+\gamma)(d-1)>\gamma\mu_P-1$, as we claimed.

\bigskip

\bibliographystyle{plainnat} 
\bibliography{references}

\appendix

\begin{lemma}[Moments of the NN-radius]\label{NN_radius}
Suppose that we have the first and third parts of Condition \ref{cond:A2}, then for $Q$-almost all $z\in\X$, all $\la\in\Rs,\,\mu\in(0,d_P)$,
and all $a\in(0,r_0)$, \begin{align*}
\E[\tak^\la\mid\tak\le r_0] & \le C_{\la,d_P}\left(\frac{k+1}{(n+1)p(z)}\right)^{\la/d_P}, \\ \E[\tak^{-\mu}\1{\tak\le r_0}] & \le 2
\left(\frac{(n+1)p(z)v(z)}k\right)^{\mu/d_P} \\ \Pb(\tak>r_0) & \le e^{1/4}\exp\left(-\frac{(n+1)p(z)}{8(k+1)}r_0^{d_P}\right), \\ 
\E[\tak^\la\1{a<\tak}\mid\tak\le r_0] & \le e^{1/4}C_{\la,d_P}\left(\frac{k+1}{(n+1)p(z)}\right)^{\la/d_P}\exp\left(-\frac{(n+1)p(z)}{
8(k+1)}a^{d_P}\right),
\end{align*}
with the constant $C_{\la,d_P}:=2\Gamma(2+\floor{\la/d_P})$.
\end{lemma}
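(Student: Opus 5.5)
The argument rests on the representation of the NN-radius through a binomial variable. Write $F_z(r):=P(B(z,r))$. Since $P$ does not charge spheres of radius at most $r_0$, for $r\le r_0$ we have $\{\tak\le r\}=\{N(r)\ge k+1\}$, where $N(r)\sim\mathrm{Bin}(n,F_z(r))$. The first part of Condition \ref{cond:A2} gives $F_z(r)\ge p(z)r^{d_P}$, so every upper tail of $\tak$ is controlled by the lower tail of a binomial with mean at least $np(z)r^{d_P}$. I would first prove the third bound: with $r=r_0$, apply a multiplicative Chernoff bound $\Pb(\mathrm{Bin}(n,q)\le k)\le\exp(-nq/8)$, valid when $k\le nq/2$. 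When $k>nq/2$, the right-hand side $e^{1/4}\exp(-(n+1)q/(8(k+1)))$ is at least $e^{1/4}e^{-1/4\cdot\dots}\ge 1$ up to adjusting, so the bound is trivial. The factor $e^{1/4}$ and the ratio $(n+1)/(k+1)$ are chosen precisely to absorb the case $k$ comparable to $nq$. Since $r\mapsto\exp(-(n+1)p(z)r^{d_P}/(8(k+1)))$ is the generic tail, the same argument gives, for every $r\le r_0$,
$$\Pb(r<\tak\le r_0)\le e^{1/4}\exp\left(-\frac{(n+1)p(z)r^{d_P}}{8(k+1)}\right).$$

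For the first and fourth bounds, I would use $\E[\tak^\la\1{a<\tak\le r_0}]=a^\la\Pb(a<\tak\le r_0)+\int_a^{r_0}\la r^{\la-1}\Pb(r<\tak\le r_0)\md r$. Inserting the tail bound and substituting $u=(n+1)p(z)r^{d_P}/(8(k+1))$ gives an incomplete Gamma integral $\int_{u_a}^\infty u^{\la/d_P-1}e^{-u}\md u$. I would bound it by $e^{-u_a/2}\cdot\Gamma$-type constants. To obtain the clean factor $\exp(-u_a)$ written in the statement, I would instead use $\Gamma(s,x)\le \Gamma(\lceil s\rceil)\,e^{-x}(1+x)^{\lceil s\rceil-1}$ and absorb $a^\la$ together with $(1+x)^{\dots}$ into the $(\cdot)^{\la/d_P}$ prefactor. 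This absorption is where the constant $2\Gamma(2+\floor{\la/d_P})$ should come from. The constants $8$ and $e^{1/4}$ may need retuning here.

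Dividing by $\Pb(\tak\le r_0)$ to pass to the conditional expectation is harmless when $\Pb(\tak\le r_0)\ge1/2$. Otherwise, the conditional moment is at most $r_0^\la$, and by the third bound $(n+1)p(z)r_0^{d_P}/(k+1)$ is then $O(1)$, so $r_0^\la\lesssim((k+1)/((n+1)p(z)))^{\la/d_P}$. This case analysis gives the factor $2$.

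The second, negative-moment bound uses the third part of Condition \ref{cond:A2} instead. With $K^*=1$ it gives $F_z(r)\le v(z)p(z)r^{d_P}$ (indeed, $v(z)$ is defined via sup over $r$). Then $\Pb(\tak\le r)\le\Pb(N(r)\ge k)\le\binom nk F_z(r)^k\le (nF_z(r))^k/k!$. I would write $\E[\tak^{-\mu}\1{\tak\le r_0}]=\int_0^\infty\Pb(\tak\le t^{-1/\mu},\tak\le r_0)\md t$ and split at $t_0=((n+1)p(z)v(z)/k)^{\mu/d_P}$. Below $t_0$ the integrand is at most $1$. Above $t_0$ it is at most $(n v p\,t^{-d_P/\mu})^k/k!$, which integrates since $\mu<d_P\le kd_P$, giving $t_0\cdot\frac{\mu}{kd_P-\mu}\cdot\frac{k^k}{k!}$-type terms. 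The main obstacle is controlling $k^k/k!\approx e^k$. To avoid it I would instead use the sharper bound $\Pb(\mathrm{Bin}\ge k)\le(enq/k)^k$ only when $enq/k<1$, and choose the split point at $enq/k=1$. This yields a constant like $e^{\mu/d_P}\cdot d_P/(d_P-\mu)$ rather than exactly $2$. Matching the precise constant $2$ would require a more careful Beta-integral computation using $F_z(\tak)\sim\mathrm{Beta}(k+1,n-k)$ and $\E[U^{-\mu/d_P}]=\frac{B(k+1-\mu/d_P,\,n-k)}{B(k+1,\,n-k)}\le\left(\frac{n+1}{k}\right)^{\mu/d_P}$, which I would adopt as the final route since it gives the stated form directly.
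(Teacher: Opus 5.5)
The proposal falls short of the lemma as stated in parts 1 and 4. Parts 2 and 3 are fine.

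\textbf{Parts 2 and 3.} For part 3 no retuning is needed. Put $q:=P(B(z,r_0))\ge p(z)r_0^{d_P}$. If $k\le nq/2$, the Chernoff bound $\exp(-nq/8)$ suffices, because $(n+1)/(k+1)\le n$. If $k>nq/2$, then $(n+1)p(z)r_0^{d_P}\le(n+1)q<2k+1<2(k+1)$, so the right-hand side is at least $e^{1/4}e^{-1/4}=1$. For part 2, the Beta route you finally adopt is exactly the paper's argument: the quantile coupling $\tak\overset{d}{=}F_z^{-1}(U_{(k+1)})$ together with $F_z^{-1}(u)\ge(u/(p(z)v(z)))^{1/d_P}$. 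Gautschi's inequality then bounds the Beta ratio by $((n+1)/k)^{\mu/d_P}$.

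\textbf{Part 1.} Here the Chernoff/layer-cake route does not produce the stated bound, and the mechanisms you invoke for the constants do not work. Put $s:=\la/d_P$, $c:=(n+1)p(z)/(8(k+1))$ and $u_a:=ca^{d_P}$. Inserting $\Pb(r<\tak\le r_0)\le e^{1/4}e^{-cr^{d_P}}$ into your layer-cake formula yields $\E[\tak^\la\1{a<\tak\le r_0}]\le e^{1/4}c^{-s}\,\Gamma(1+s,u_a)$. At $a=0$ this equals $e^{1/4}8^s\Gamma(1+s)\,((k+1)/((n+1)p(z)))^{s}$. After dividing by $\Pb(\tak\le r_0)\ge 1/2$, your constant is $2e^{1/4}8^s\Gamma(1+s)$. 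This is strictly larger than $C_{\la,d_P}=2\Gamma(2+\floor{s})\le 2(1+s)\Gamma(1+s)$ for every $s>0$. So $C_{\la,d_P}$ does not arise from any absorption, and its factor $2$ does not come from your case split.

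The idea you are missing is to use the Beta representation for part 1 as well, as the paper does:
\begin{itemize}
\item $\E[\tak^\la\mid\tak\le r_0]\le p(z)^{-s}\beta(k+1+s,n-k;F_z(r_0))/\beta(k+1,n-k;F_z(r_0))$.
\item For $s'\ge s$, the map $u_0\mapsto\beta(s',t;u_0)/\beta(s,t;u_0)$ is non-decreasing. This removes the truncation at $r_0$ with no case analysis.
\item The bounds $\Gamma(a+s)/\Gamma(a)\le\Gamma(2+\floor{s})a^s$ and $\Gamma(a)/\Gamma(a+s)\le 2a^{-s}$ then give exactly $C_{\la,d_P}\,((k+1)/((n+1)p(z)))^{s}$.
\end{itemize}

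\textbf{Part 4.} For $a>0$ you have $\Gamma(1+s,u_a)\ge u_a^se^{-u_a}$. The prefactor $((k+1)/((n+1)p(z)))^{s}$ does not depend on $a$, so the growing factor $u_a^s$ (or your $(1+u_a)^{\lceil s\rceil-1}$) cannot be absorbed into it. From your generic tail bound you only get decay like $e^{-u_a/2}$, i.e.\ a different constant in the exponent.

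The paper itself defers the details of part 4 to Viel (2025). You can at least restore the rate $e^{-u_a}$ by keeping the slack you discarded. In the Chernoff regime the bound is $\exp(-nF_z(r)/8)\le\exp(-\tfrac{n(k+1)}{n+1}cr^{d_P})$, and $\tfrac{n(k+1)}{n+1}\ge\tfrac43$ for $n\ge 2$. In the other regime $cr^{d_P}<1/4$. Hence $\Pb(\tak>r)\le e^{1/3}e^{-\frac43cr^{d_P}}$ for all $r\le r_0$. The surplus $e^{-cr^{d_P}/3}$ then absorbs the polynomial factor, via $\Gamma(1+s,\tfrac43u_a)\le 4^{1+s}\Gamma(1+s)e^{-u_a}$.

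This gives parts 1 and 4 with larger $s$-dependent constants, which suffices for the rates in the theorems. It does not give the lemma with the constants as stated.
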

\begin{proof}
The proof of the first point is very similar to \citet{viel2025convergence}, Lemma 4. More precisely, let $F_z:r\mapsto P(B(z,r)),\,
F_z^{-1}:u\mapsto\inf\{r\in\R\,:\,F_z(r)\geq u\}$ be its generalised inverse, and let $(U_1,\dots,U_n)$ be an $n$-sample of the uniform
distribution on $[0,1]$. Then the two random vectors $(\norm{X_1-z},\dots,\norm{X_n-z})$ and $(F_z^{-1}(U_1),\dots,F_z^{-1}(U_n))$ have
the same distribution. Because the function $F_z^{-1}$ is non-decreasing, the two vectors $(\hat\tau_1(z),\dots,\hat\tau_n(z))$ and
$(F_z^{-1}(U_{(1)}),\dots,F_z^{-1}(U_{(n)}))$ also have the same distribution. In particular, defining the incomplete beta function
$\beta(s,t;u_0):=\int_0^{u_0}u^{s-1}(1-u)^{t-1}\md u$, we can write $$\E[\tak^\la\mid\tak\le r_0]=\frac 1{\beta(k+1,n-k;F_z(r_0))}
\int_0^{F_z(r_0)}(F_z^{-1}(u))^\la\,\1{F_z^{-1}(u)\le r_0}\,u^k(1-u)^{n-k-1}\,\mathrm du.$$

However, using the first part of Condition \ref{cond:A2}, we know that for all $r\in[0,r_0]$, we have $F_z(r)\geq p(z)r^{d_P}$, so that
for all $u\in[0,F_z(r_0)]$, $F_z^{-1}(u)\le\left(\frac u{p(z)}\right)^{1/d_P}$. It means that $$\E[\tak^\la\mid\tak\le r_0]\le\frac 1{
\beta(k+1,n-k;F_z(r_0))}\,p(z)^{-\la/d_P}\beta(k+1+\la/d_P,n-k;F_z(r_0)).$$ For all $s'\ge s\ge 1$, $u_0^{s-1}(1-u_0)^{t-1}\int_0^{u_0}
u^{s'-1}(1-u)^{t-1}\md u\le u_0^{s'-1}(1-u_0)^{t-1}\int_0^{u_0}u^{s-1}(1-u)^{t-1}\md u$, and therefore the function $u_0\mapsto\beta(
s',t;u_0)/\beta(s,t;u_0)$ is non-decreasing as its derivative is nonnegative. It yields \begin{align*}
\E[\tak^\la\mid\tak\le r_0] & \le p(z)^{-\la/d_P}\,\frac{\beta(k+1+\la/d_P,n-k)}{\beta(k+1,n-k)} \\ & =p(z)^{-\la/d_P}\frac{\Gamma(n+1)}{
\Gamma(n+\la/d_P+1)}\frac{\Gamma(k+1+\la/d_P)}{\Gamma(k+1)} \\ & \le p(z)^{-\la/d_P}\frac 2{(n+1)^{\la/d_P}}\,\Gamma(2+\floor{\la/d_P})
(k+1)^{\la/d_P},
\end{align*}
using the inequalities $\Gamma(a+s)/\Gamma(a)\le\Gamma(2+\floor s)\,a^s$ and $\Gamma(a)/\Gamma(a+s)\le 2\,a^{-s}$, as shown in
\citet{portier2024scalable}. The second point follows from the same reasoning, with the inequality $F_z^{-1}(u)\ge\left(\frac u{p(z)v(z)}
\right)^{1/d_P}$, for all $u\in[0,F_z(r_0)]$. Similarly, the proof of the last two points can be obtained by minor modifications of
\citet{viel2025convergence}, Lemmas 5 and 6. Details are omitted.
\end{proof}

\begin{lemma}[Conditional independence]\label{cond_ind}
Let $y,z\in\X$ and let $\Psi_y,\Psi_z$ be $P^{\otimes k}$-integrable functions invariant under permutations of the coordinates, defined 
on the respective NN-balls $B(y,\hat\tau_{k+1}(y))$ and $B(z,\tak)$. For $a\in\{y,z\}$, set $D_k(a):=(X_i)_{i\in I_k(a)}$, then on the
event $(\hat\tau_{k+1}(y)+\tak<\norm{y-z})$ we have the conditional independence, almost-surely, 
$$\E[\Psi_y(D_k(y))\Psi_z(D_k(z))\mid\hat\tau_{k+1}(y),\tak]=\E[\Psi_y(D_n(y))\mid\hat\tau_{k+1}(y)]\,\E[\Psi_z(D_k(z))\mid\tak],$$ 
and the conditional distribution, almost-surely, 
$$\E[\Psi_z(D_k(z))\mid\tak]=\frac 1{P(B(z,\tak))^k}\int_{B(z,\tak)^k}\Psi_z(\bar x)\md P^{\otimes k}(\bar x).$$
\end{lemma}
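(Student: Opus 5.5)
The plan is to reduce everything to the i.i.d. structure of the sample $(X_i)_{i\in\ieu}$ and the classical fact that, given the $(k+1)$th order statistic of the distances to a fixed point, the $k$ points strictly inside are i.i.d. from the restricted law. I would prove the second claim (conditional distribution) first, since the independence claim then follows from a joint version of the same computation.

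\textbf{Step 1 (single ball).} Fix $z$ and condition on $\tak=t$. By exchangeability, write
\[
\E[\Psi_z(D_k(z))\mid\tak]
\]
as a sum over subsets $I\subset\ieu$ of size $k$ and indices $j\notin I$ playing the role of the $(k+1)$th neighbour. The joint law of $(X_i)_{i\in I}$, $X_j$, and the remaining points then factorises: the points of $I$ lie in the open ball $B^\circ(z,\norm{X_j-z})$, and the remaining $n-k-1$ points lie outside the closed ball. Integrating out the outside points and using the symmetry of $\Psi_z$, the conditional law of $D_k(z)$ given $\tak=t$ is $P^{\otimes k}$ restricted to $B^\circ(z,t)^k$ and normalised. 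The no-ties assumption (that $P$ charges no sphere of radius at most $r_0$ about $Q$-a.e.\ $z$) lets me replace the open ball by the closed one, which gives the displayed formula. To make this rigorous, I would verify the identity against test functions $\phi(\tak)$ via Fubini rather than using densities of $\tak$.

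\textbf{Step 2 (two disjoint balls).} For the independence claim, I would do the same decomposition jointly. I sum over disjoint index sets $I_y,I_z$ of size $k$ and the two $(k+1)$th indices $j_y,j_z$ (possibly equal or in the other set, which needs care), restricted to the event $\hat\tau_{k+1}(y)+\tak<\norm{y-z}$. On this event the two closed balls are disjoint, so $I_y\cap I_z=\emptyset$. Testing against $\phi(\hat\tau_{k+1}(y),\tak)$ and integrating out the points outside both balls, the integrand factorises into a $y$-part, $P^{\otimes k}$ on $B(y,s)^k$ times $\Psi_y$, and a $z$-part, $P^{\otimes k}$ on $B(z,t)^k$ times $\Psi_z$, multiplied by a factor depending only on $(s,t)$. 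Dividing by the same expression with $\Psi\equiv1$ yields
\[
\E[\Psi_y\Psi_z\mid s,t]=\frac{\int_{B(y,s)^k}\Psi_y\,\md P^{\otimes k}}{P(B(y,s))^k}\cdot\frac{\int_{B(z,t)^k}\Psi_z\,\md P^{\otimes k}}{P(B(z,t))^k}.
\]
By Step 1, each factor equals the corresponding single-ball conditional expectation. (The $D_n(y)$ in the statement is read as $D_k(y)$.)

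\textbf{Main obstacle.} The hardest part is the bookkeeping of the $(k+1)$th neighbours in Step 2. The point realising $\hat\tau_{k+1}(y)$ may lie anywhere outside $B^\circ(y,s)$, including inside $B(z,t)$, and vice versa, so the "outside" region is not simply the complement of both balls. I would handle this by conditioning on the full configuration of index roles, that is, which points fall in each ball and which realise the two radii. I would then check that the resulting $(s,t)$-dependent factor does not involve the integrands $\Psi_y,\Psi_z$, using only that on the event the inner $k$ points of each ball lie in disjoint regions with product-form constraints.
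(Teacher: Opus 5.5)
Your proposal is correct and follows essentially the paper's route: decompose the event by the unique roles of indices $(i_y,i_z,J_y,J_z)$, use exchangeability to reduce to a single configuration with multiplicity $\mathcal K_n$, test against a bounded function of the two radii, integrate out the remaining points (which must lie outside both disjoint closed balls, giving a factor that depends only on the radii), and normalise. The ``main obstacle'' you flag is in fact vacuous, for the following reason. On the event $\hat\tau_{k+1}(y)+\tak<\norm{y-z}$, the point realising $\hat\tau_{k+1}(y)$ lies on the sphere of radius $\hat\tau_{k+1}(y)$ about $y$, and hence at distance greater than $\tak$ from $z$ (and symmetrically for $z$). So $i_y\neq i_z$ and the four index roles are automatically disjoint, which is exactly how the paper describes the event.
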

\begin{proof}
The event $(\hat\tau_{k+1}(y)+\tak)<\norm{y-z}$ occurs if and only if there exists a unique tuple $(i_y,i_z,J_y,J_z)$, where $i_y\neq i_z
\in\ieu$, $J_y,J_z$ are subsets of $\ieu\backslash\{i_y;i_z\}$ of cardinal $k$ with $J_y\cap J_z=\emptyset$, such that the event 
$\mathcal A_{J_y,J_z,i_y,i_z}(X_1,\dots,X_n)$, described below, occurs: \begin{itemize}
\item for $a\in\{y;z\}$ and all $j\in J_a,\,\norm{X_j-a}<\norm{X_{i_a}-a}$,
\item for $a\in\{y;z\}$ and all $j\in\ieu\backslash J_y\sqcup J_z\sqcup\{i_y;i_z\},\,\norm{X_j-a}>\norm{X_{i_a}-a}$,
\item $\norm{X_{i_y}-y}+\norm{X_{i_z}-z}<\norm{y-z}$.
\end{itemize}
If $F:\R_+^2\to\R$ is a measurable bounded function, we have \begin{multline*}
\E[\Psi_y(D_k(y))\Psi_z(D_k(z))F(\hat\tau_{k+1}(y),\tak)\1{\hat\tau_{k+1}(y)+\tak<\norm{y-z}}]= \\ \mathcal K_n\int_{\X^n}\Psi_y(x_1,
\dots,x_k)\Psi_z(x_{k+1},\dots,x_{2k})F(\norm{x_{2k+1}-y},\norm{x_{2k+2}-z}) \\ \1{\mathcal A_{\ieu[k],\ieu[2k][k+1],2k+1,2k+2}(x_1,
\dots,x_n)}\md P^{\otimes n}(x_1,\dots,x_n),  
\end{multline*}
where $\mathcal K_n:=n(n-1)\binom{n-2}k\binom{n-2-k}k$, hence the conditional independence and distribution after normalisation.
\end{proof}

\begin{lemma}[Negative Correlation]\label{negative_correlation}
Let $y,z\in\X$ and $r_y,r_z\in[0,r_0]$ be such that $\norm{y-z}>r_y+r_z$. Then we have the negative correlation
$$\Pb(\hat\tau_{k+1}(y)>r_y,\tak>r_z)\le\Pb(\hat\tau_{k+1}(y)>r_y\mid\hat\tau_{k+1}(y)\le r_0)\,\Pb(\tak>r_z\mid\tak\le r_0).$$
Let $y,z\in\X$ and let $\Phi_y,\Phi_z:\R_+\to\R_+$ be non-decreasing functions, then we have the negative correlation \begin{multline*}
\E[\Phi_y(\hat\tau_{k+1}(y))\Phi_z(\tak)\1{\hat\tau_{k+1}(y)+\tak<\norm{y-z},\max\{\hat\tau_{k+1}(y);\tak\}\le r_0}]\le \\ 
\E[\Phi_y(\hat\tau_{k+1}(y))\mid\hat\tau_{k+1}(y)\le r_0]\,\E[\Phi_z(\tak)\mid\tak\le r_0]    
\end{multline*}
\end{lemma}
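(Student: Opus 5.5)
The plan is to reduce both inequalities to a negative-association property of multinomial counts over disjoint balls, and then pass from unconditional probabilities to the conditional ones appearing on the right-hand sides.

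\emph{Step 1: counts over disjoint balls.} Fix $y,z$ and radii $r_y,r_z$ with $\norm{y-z}>r_y+r_z$, so that the closed balls $B(y,r_y)$ and $B(z,r_z)$ are disjoint. Set $N_y:=\Card\{i\in\ieu\mid X_i\in B(y,r_y)\}$ and define $N_z$ similarly. By the no-ties assumption, almost surely
$$(\hat\tau_{k+1}(y)>r_y)=(N_y\le k),\qquad (\tak>r_z)=(N_z\le k).$$
The vector $(N_y,N_z,n-N_y-N_z)$ is multinomial. Multinomial vectors are negatively associated (Joag-Dev and Proschan). Both events above are non-increasing functions of disjoint coordinates, so
$$\Pb(\hat\tau_{k+1}(y)>r_y,\tak>r_z)\le\Pb(\hat\tau_{k+1}(y)>r_y)\,\Pb(\tak>r_z).$$
A self-contained alternative avoids negative association. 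Conditionally on $N_y=j$, the remaining $n-j$ points are i.i.d.\ on the complement of $B(y,r_y)$. Hence $N_z$ given $N_y=j$ is binomial with parameters $n-j$ and $P(B(z,r_z))/(1-P(B(y,r_y)))$, and this law is stochastically decreasing in $j$. Therefore $\Pb(N_z\le k\mid N_y=j)$ is non-decreasing in $j$, while $\mathbf 1(j\le k)$ is non-increasing. Chebyshev's association inequality for one oppositely monotone pair then gives the same product bound.

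\emph{Step 2: the second inequality.} Write $\Phi_y(\hat\tau_{k+1}(y))\1{\hat\tau_{k+1}(y)\le r_0}$ as a mixture of indicators via the layer-cake formula, $\Phi(t)=\Phi(0)+\int\1{t>s}\md\Phi(s)$. The positive parts are indicators $\1{s_y<\hat\tau_{k+1}(y)\le r_0}$, and similarly for $z$. The constraint $\hat\tau_{k+1}(y)+\tak<\norm{y-z}$ lets us, after discretising the two radii, fix the balls $B(y,s_y)$ and $B(z,s_z)$ disjoint. On each piece, Step 1 applies to the events $(N_y\le k)$ and $(N_z\le k)$. The upper caps $(\hat\tau\le r_0)$ are handled by conditioning on the configuration outside the two small balls. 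I expect this to be the delicate bookkeeping: each $\hat\tau$ must be pinned between two levels while disjointness is preserved.

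Applying Step 1 to pairs of intervals yields the product
$$\E[\Phi_y(\hat\tau_{k+1}(y))\1{\hat\tau_{k+1}(y)\le r_0}]\;\E[\Phi_z(\tak)\1{\tak\le r_0}].$$
Each factor is at most the corresponding conditional expectation given $\hat\tau\le r_0$, since multiplying by $\Pb(\hat\tau\le r_0)\le 1$ only decreases it. This gives the second inequality exactly as stated.

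\emph{Step 3: the first inequality — main obstacle.} After Step 1 it remains to compare $\Pb(\tak>r_z)$ with $\Pb(\tak>r_z\mid\tak\le r_0)$. Write $A:=\Pb(\tak\le r_z)$, $B:=\Pb(r_z<\tak\le r_0)$ and $C:=\Pb(\tak>r_0)$. A direct computation shows the required comparison is equivalent to $CA\le 0$, so this factorwise step closes only when $A=0$ or $C=0$. In particular, it holds whenever $P(B(z,r_0))=1$, since then $C=0$, and it fails in general.

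I would first try to route the argument through the intersected event: Step 1 applied to $(r_y<\hat\tau_{k+1}(y)\le r_0,\;r_z<\tak\le r_0)$ gives the stated right-hand side. This matches every later use of the lemma, where the censoring indicators are present, because in the proofs of Theorems \ref{bias} and \ref{better_bias} the high-distance parts are treated separately through Lemma \ref{NN_radius} (3).

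For the literal statement, the plan is to record the identity above and verify the inequality in the regime where $\Pb(\tak>r_0)=0$. The first inequality, read without the censoring event, cannot be obtained by this factorwise route in general, so I treat this step as the point where the statement itself needs the restriction to $\max\{\hat\tau_{k+1}(y);\tak\}\le r_0$.
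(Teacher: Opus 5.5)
Your diagnosis in Step 3 is right, and it can be sharpened: the first inequality is not merely out of reach of the factorwise route, it is false. Take $r_y=r_0$, $r_z=0$ and $\norm{y-z}>r_0$. The right-hand side vanishes, since $\Pb(\hat\tau_{k+1}(y)>r_0\mid\hat\tau_{k+1}(y)\le r_0)=0$. The left-hand side equals $\Pb(\hat\tau_{k+1}(y)>r_0)>0$ whenever $0<P(B(y,r_0))<1$.

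The paper argues by induction on $n$ and simply asserts the base case $n=k+1$, which fails for exactly this reason. Its layer-cake proof of the second inequality drops the cap $\max\{\hat\tau_{k+1}(y);\tak\}\le r_0$ and then invokes this false first part. What your Step 1 proves is the correct statement: $\Pb(\hat\tau_{k+1}(y)>r_y,\tak>r_z)\le\Pb(\hat\tau_{k+1}(y)>r_y)\,\Pb(\tak>r_z)$ for disjoint balls. Both of your arguments for it are sound.

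The genuine gap is Step 2, together with the intersected-event fix proposed in Step 3. Write $N(A):=\Card\{i\in\ieu\mid X_i\in A\}$. Both steps apply Step 1 to events $\{s<\hat\tau_{k+1}(y)\le r_0\}=\{N(B(y,s))\le k\}\cap\{N(B(y,r_0))\ge k+1\}$. These are not monotone in the counts, so negative association says nothing about them. The bookkeeping you defer cannot be completed, because the conclusion is false even when $B(y,r_0)$ and $B(z,r_0)$ are disjoint.

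Here is a counterexample. Take $k=1$, $n=4$, $\norm{y-z}>2r_0$, $P(B(y,r_0))=P(B(z,r_0))=1/2$, $P(B(y,r_y))=P(B(z,r_z))=9/20$, $\Phi_y=\1{\cdot>r_y}$ and $\Phi_z=\1{\cdot>r_z}$.
\begin{itemize}
\item The cap forces exactly two sample points in each $r_0$-ball. So the left-hand side of the second inequality is $\frac 6{16}(0.19)^2\approx 0.0135$.
\item On the other side, $\Pb(\hat\tau_2(y)>r_y\mid\hat\tau_2(y)\le r_0)=\frac{6(0.19)+4(0.028)+0.0037}{11}\approx 0.114$, so the right-hand side is about $0.0130$.
\item Your intermediate product $\E[\Phi_y(\hat\tau_2(y))\1{\hat\tau_2(y)\le r_0}]\,\E[\Phi_z(\hat\tau_2(z))\1{\hat\tau_2(z)\le r_0}]\approx 0.0062$ is smaller still.
\end{itemize}
Conditioning on both caps pushes the count in each $r_0$-ball down, which inflates the decreasing events. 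So the second inequality, and your intersected version of the first, are false as well.

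What does hold, by your Step 1 and the layer-cake formula, is $\E[\Phi_y(\hat\tau_{k+1}(y))\Phi_z(\tak)\1{\hat\tau_{k+1}(y)+\tak<\norm{y-z}}]\le\E[\Phi_y(\hat\tau_{k+1}(y))]\,\E[\Phi_z(\tak)]$ for non-decreasing $\Phi_y,\Phi_z$. Use it with $\Phi(t)=\min\{t,r_0\}^\ell$. The extra term $r_0^\ell\,\Pb(\tak>r_0)$ is then controlled by Lemma \ref{NN_radius} (3). This is enough for the way the lemma enters the proof of Theorem \ref{bias}.
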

\begin{proof}
We prove the first result by induction on the number of observations $n$. The result is true when $n=k+1$. We define the probabilities
\begin{align*}
I_{\ell,\ell'}^{(n)} & :=\Pb\left(\sum_{i=1}^n\mathbf 1(X_i\in B(y,r_y))\leq\ell,\,\sum_{i=1}^n\mathbf 1(X_i\in B(z,r_z))\le\ell'\right),
\\ A_\ell^{(n)} & :=\Pb\left(\sum_{i=1}^n\mathbf 1(X_i\in B(y,r_y))\leq\ell\mid\hat\tau_{k+1}(y)\le r_0\right),\quad\text{and} \\ 
B_{\ell'}^{(n)} & :=\Pb\left(\sum_{i=1}^n\mathbf 1(X_i\in B(z,r_z))\leq\ell\mid\tak\le r_0\right),
\end{align*}
as well as the two quantities $a:=\Pb(X_n\in B(y,r_y))$ and $b:=\Pb(X_n\in B(z,r_z))$. To apply our induction hypothesis, we apply the 
law of total probability to the random variable $X_n$, which produces the equations $$I_{\ell,\ell'}^{(n)}=aI_{\ell-1,\ell'}^{(n-1)}+b
I_{\ell,\ell'-1}^{(n-1)}+(1-a-b)I_{\ell,\ell'}^{(n-1)}.$$ We also have the conditional negative correlations \begin{align*}
A_\ell^{(n)} & \ge aA_{\ell-1}^{(n-1)}+(1-a)A_\ell^{(n-1)},\quad\text{and} \\ B_{\ell'}^{(n)} & \ge bB_{\ell'-1}^{(n-1)}+(1-b)B_{\ell'}^{
(n-1)}.
\end{align*}
Therefore, \begin{multline*}
A_\ell^{(n)}B_{\ell'}^{(n)}\ge abA_{\ell-1}^{(n-1)}B_{\ell'-1}^{(n-1)}+a(1-b)A_{\ell-1}^{(n-1)}B_{\ell'}^{(n-1)}+b(1-a)A_\ell^{(n-1)}
B_{\ell'-1}^{(n-1)} \\ +(1-a)(1-b)A_\ell^{(n-1)}B_{\ell'}^{(n-1)}.    
\end{multline*}
We recognise the terms $aA_{\ell-1}^{(n-1)}B_{\ell'}^{(n-1)},\,bA_\ell^{(n-1)}B_{\ell'-1}^{(n-1)}$, and $(1-a-b)A_\ell^{(n-1)}B_{\ell'}^{
(n-1)}$, so that applying the induction hypothesis, we find \begin{align*}
A_\ell^{(n)}B_{\ell'}^{(n)}-I_{\ell,\ell'}^{(n)} & \geq ab\,(A_{\ell-1}^{(n-1)}B_{\ell'-1}^{(n-1)}-A_{\ell-1}^{(n-1)}B_{\ell'}^{(n-1)}-
A_\ell^{(n-1)}B_{\ell'-1}^{(n-1)}+A_\ell^{(n-1)}B_{\ell'}^{(n-1)}) \\ & =ab(A_\ell^{(n-1)}-A_{\ell-1}^{(n-1)})(B_{\ell'}^{(n-1)}-
B_{\ell'-1}^{(n-1)}) \\ & \geq 0,   
\end{align*}
which concludes the proof by induction.

For the second point, we can write the expectation of non-negative random variables thanks to tail distribution functions as follows. Let
$\Phi^+$ denote the right-continuous generalised inverse of a non-decreasing function $\Phi$, then \begin{align*}
\fbox{1-1} & :=\E[\Phi_y(\hat\tau_{k+1}(y))\,\Phi_z(\tak)\,\1{\hat\tau_{k+1}(y)+\tak<\norm{y-x},\max\{\hat\tau_{k+1}(y);\tak\}\le r_0}] 
\\ &  =\int_{\R_+^2}\Pb(\Phi_y(\hat\tau_{k+1}(y))>s,\,\Phi_z(\tak)>t,\,\hat\tau_{k+1}(y)+\tak<\norm{y-x},\max\{\hat\tau_{k+1}(y);\tak\}
\le r_0)\,\mathrm d(t,s) \\ & \leq\int_{\R_+^2}\1{\max\{\Phi_y^+(t);\Phi_z^+(s)\}\le r_0,\Phi_y^+(t)+\Phi_z^+(s)<\norm{y-x}}\Pb(\hat\tau_{
k+1}(y)>\Phi_y^+(s),\,\tak>\Phi_z^+(t))\,\mathrm d(t,s) \\ & \leq\int_{\R_+^2}\Pb(\tak>\Phi_z^+(t)\mid\tak\le r_0)\,\Pb(\hat\tau_{k+1}(y)>
\Phi_y^+(s)\mid\hat\tau_{k+1}(y)\le r_0)\,\mathrm d(t,s) \\ & =\E[\Phi_z(\tak)\mid\tak\le r_0]\,\E[\Phi_y(\hat\tau_{k+1}(y))\mid
\hat\tau_{k+1}(y)\le r_0].   
\end{align*}
\end{proof}

\begin{lemma}[Key Lemma]\label{key}
With the notation from Section \ref{polynomial}, under Condition \ref{cond:A2}, and assuming that $k\ge(2D+1)K^*$, 
there exists a constant $C_\X$ depending on $L$ and $d$, such that for $Q$-almost all $z\in\X$, almost-surely,
$$\E[e_0^\top M(z)^{-1}e_0\mid\tak]\,\1{\tak\le r_0}\le\frac{C_\X}k\,v(z)^{2DK^*}.$$
\end{lemma}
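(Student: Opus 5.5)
Conditioning on $\tak=\tau\le r_0$ and on $I_k(z)$, the points $(X_i)_{i\in I_k(z)}$ are i.i.d. with law the trace of $P$ on $B(z,\tau)$, by the second part of Lemma \ref{cond_ind}. I will therefore rescale: put $U_i:=(X_i-z)/\tau\in B(0,1)$ and let $\Delta_\tau:=\operatorname{diag}(\tau^{\sva\alpha})_{\alpha\in N_L}$. Then $\zeta(z,X_i)=\Delta_\tau\zeta(0,U_i)$ and $M(z)=\Delta_\tau\tilde M\Delta_\tau$, where $\tilde M:=\sum_{i\in I_k(z)}\zeta(0,U_i)\zeta(0,U_i)^\top$. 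Since the $e_0$ entry of $\Delta_\tau$ equals $1$, we get $e_0^\top M(z)^{-1}e_0=e_0^\top\tilde M^{-1}e_0\le\lambda_{\min}(\tilde M)^{-1}$. This reduces the problem to bounding $\E[\lambda_{\min}(\tilde M)^{-1}]$ by $C/k$ for $k$ i.i.d. points $U_i$ with law $\nu_\tau:=P((z+\tau\cdot)\cap B(z,\tau))/P(B(z,\tau))$ on $B(0,1)$.

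The third part of Condition \ref{cond:A2}, combined with the definition of $v(z)$ and $P(B(z,\tau))\ge p(z)\tau^{d}$ (normalised as in the definition of $v$), gives an anti-concentration bound for $K^*$ of these points: $\nu_\tau^{\otimes K^*}(\mathcal A)\le (v(z)/\vab)^{K^*}\cdots\le (C v(z))^{K^*}\va{\mathcal A}$. So $K^*$-tuples have a density bounded by $(Cv(z))^{K^*}$ with respect to Lebesgue measure on $B(0,1)^{K^*}$. I would follow the approach of \citet{holzmann2026multivariate}. Split the $k$ indices into $m:=\floor{k/K^*}\ge 2D+1$ disjoint blocks of size $K^*$. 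Then $\tilde M\succeq\sum_{b=1}^m M_b$, where $M_b$ is the Gram matrix of block $b$. For a block, $\det M_b=\det(Z_b)^2$, where $Z_b$ is the $K^*\times K^*$ Vandermonde-type matrix $(\zeta_\alpha(0,U_i))$. Since all entries are bounded on $B(0,1)$, $\lambda_{\min}(M_b)\ge c\,\det(Z_b)^2$, and hence $\lambda_{\min}(\tilde M)\ge c\max_b\det(Z_b)^2$.

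The key step, and the one I expect to be the main obstacle, is a small-ball estimate $\Pb(\va{\det Z_b}\le \e)\le C v(z)^{K^*}\e^{1/D'}$, for a suitable exponent tied to $D$. Because the block law has density at most $(Cv(z))^{K^*}$, it suffices to bound the Lebesgue measure of $\{u\in B(0,1)^{K^*}:\va{\det Z(u)}\le\e\}$. Here $\det Z(u)$ is a nonzero polynomial, by unisolvence of polynomial interpolation for generic points, and its degree is $\sum_\alpha\sva\alpha=D$. A Remez/Carbery--Wright type inequality for polynomials of degree $D$ then gives measure $\le C\e^{1/D}$. Consequently $\Pb(\lambda_{\min}(M_b)\le t)\le C v(z)^{K^*}t^{1/(2D)}$.

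By independence of the blocks, $\Pb(\lambda_{\min}(\tilde M)\le t)\le \min\{1,(Cv(z)^{K^*}t^{1/(2D)})^{m}\}$. Integrating the tail, $\E[\lambda_{\min}(\tilde M)^{-1}]=\int_0^\infty\Pb(\lambda_{\min}<1/s)\,ds$, which is finite as soon as $m/(2D)>1$; this is exactly where $k\ge(2D+1)K^*$ enters. The threshold region $s\lesssim (Cv(z)^{K^*})^{2D}$ contributes the factor $v(z)^{2DK^*}$. This only gives an $O(v^{2DK^*})$ bound, not $O(v^{2DK^*}/k)$. To recover the $1/k$ factor, I would use superadditivity instead of the max: $\lambda_{\min}(\tilde M)\ge\sum_b\lambda_{\min}(M_b)$. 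For $m$ blocks, a standard argument applies. Either take the first $2D+1$ blocks to control the lower tail, or use a Chernoff bound on the number of blocks with $\lambda_{\min}(M_b)\ge t_0$ for a $t_0\asymp v(z)^{-2DK^*}$ at which the success probability is at least $1/2$. On the event that at least $m/4$ blocks succeed, $\lambda_{\min}\ge c\,m\,t_0$. The complementary event has probability $e^{-cm}$ and is handled by the $(2D+1)$-block tail bound above. Together these give $\E[e_0^\top M(z)^{-1}e_0\mid\tak]\le C_\X v(z)^{2DK^*}/k$ on $\{\tak\le r_0\}$.
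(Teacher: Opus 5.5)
Your proposal is correct and is essentially the paper's argument. The paper also just runs the \citet{holzmann2026multivariate} technique, and its only new ingredient is exactly your step: conditionally on $\tak\le r_0$, the third part of Condition \ref{cond:A2} and the definition of $v(z)$ bound the density of rescaled $K^*$-tuples by $v(z)^{K^*}$, giving $\Pb(\va{\det Z_b}\le\e\mid\tak)\le Cv(z)^{K^*}\e^{1/D}$, and $2D+1$ independent groups of $K^*$ neighbours then make the inverse moment finite with the factor $v(z)^{2DK^*}$. Your Carbery--Wright bound replaces the paper's explicit sets $\mathcal C_i(\e)$; in your Chernoff step for the $1/k$ factor, define the bad event $E^c$ on blocks disjoint from the $2D+1$ fallback blocks (or use H\"older with an exponent $p\in(1,(2D+1)/(2D))$), so that $\E[\lambda_{\min}(\tilde M)^{-1}\mathbf 1_{E^c}]\lesssim e^{-cm}v(z)^{2DK^*}\lesssim v(z)^{2DK^*}/m$.
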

\begin{proof}
We use the proof technique of \citet{holzmann2026multivariate}, following the structure and notation from \citet{viel2025convergence}, 
Lemma $1$. The only difference from our setup is the use of Conditions \ref{cond:A2} along with the indicator $\1{\tak\le r_0}$ instead of
the strong density assumption. This leads to the upper bound $$\Pb\left(\va{\det(\Xi(V))}\le\e\mid\tak\right)\le\sum_{i=1}^D\va{
\mathcal C_i(\e)}\left(\frac{p(z)v(z)\tak^d}{P(B(z,\tak))}\right)^{K^*}\le Cv(z)^{K^*}\e^{1/D},$$ for some constant $C>0$. 
We can then resume the proof to obtain the result stated above.
\end{proof}

\begin{lemma}[Covariance Key Lemma]\label{key_cov}
With the notation from Section \ref{polynomial}, under Condition \ref{cond:A2}, and assuming that $k\ge(2D+1)K^*$, 
there exists a constant $C_\X$ depending on $L$ and $d$, such that for $Q^{\otimes 2}$-almost all $(y,z)\in\X^2$, almost-surely,
$$\E[e_0^\top M(z)^{-1}e_0\1{\tak\le\hat\tau_{k+1}(y)}\mid\hat\tau_{k+1}(y)]\,\1{\hat\tau_{k+1}(y)\le r_0,z\in B(y,2\hat\tau_{k+1}(y))}
\le\frac{C_\X}k\,v(y)^{2DK^*}.$$
\end{lemma}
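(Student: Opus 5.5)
The plan is to reduce Lemma \ref{key_cov} to the pointwise Key Lemma \ref{key} applied at the point $y$ rather than $z$. On the event $\{\hat\tau_{k+1}(y)\le r_0,\ z\in B(y,2\hat\tau_{k+1}(y))\}\cap\{\tak\le\hat\tau_{k+1}(y)\}$, the NN-ball around $z$ lies in $B(z,\hat\tau_{k+1}(y))\subset B(y,3\hat\tau_{k+1}(y))$, with radius $\tak\le r_0$. This is why Condition \ref{cond:A2} asks for the density-type bound on sets $\mathcal A\subset B(0,3)^{K^*}$ and why $v(y,r)$ is indexed by $y$. So I will rerun the proof of Lemma \ref{key}, i.e. the technique of \citet{holzmann2026multivariate}. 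I first write $e_0^\top M(z)^{-1}e_0\le\lambda_{\min}(M(z))^{-1}$. Then I bound the smallest eigenvalue of $M(z)$ from below through determinants of the $K^*\times K^*$ submatrices $\Xi(V)$ built from $K^*$-tuples of neighbours $V\subset I_k(z)$, rescaled by the radius. Splitting the $k\ge(2D+1)K^*$ neighbours into $2D+1$ disjoint blocks of size $K^*$ gives independence, and this yields the moment bound $\E[\lambda_{\min}^{-1}]\lesssim k^{-1}\,(\text{small-ball constant})^{2D}$.

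The key step is the small-ball estimate $\Pb(\va{\det\Xi(V)}\le\e\mid\cdot)\le C v(y)^{K^*}\e^{1/D}$. The delicate point is the conditioning: it is on $\hat\tau_{k+1}(y)$, not on $\tak$. I therefore condition further on $\tak$ and on the index sets. Since $\tak\le\hat\tau_{k+1}(y)$, the $z$-neighbours are i.i.d. draws from $P$ restricted to $B(z,\tak)$; I will justify this from a decomposition as in Lemma \ref{cond_ind}, though the $y$-conditioning creates a mild dependence that I plan to handle by conditioning on the full configuration of points in $B(y,\hat\tau_{k+1}(y))$. I rescale these points by $r:=\hat\tau_{k+1}(y)$ around $y$, so that they lie in $B(0,3)^{K^*}$. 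Applying the third part of Condition \ref{cond:A2} at the point $y$ with radius $r\le r_0$, the rescaled tuple has density at most
\[
\left(\frac{v(y,r)p(y)r^{d_P}}{P(B(z,\tak))}\right)^{K^*}.
\]
The zero set of the determinant polynomial then has Lebesgue-small neighbourhoods of size $\va{\mathcal C_i(\e)}\lesssim\e^{1/D}$ (Łojasiewicz/Carbery–Wright type), as in \citet{viel2025convergence}, Lemma 1.

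The main obstacle is the normalising mass. I need $P(B(z,\tak))$ comparable to $P(B(y,r))$ so that the ratio becomes $v(y)$. This fails if $\tak\ll r$. To handle this, I will change the rescaling: I rescale around $z$ by $\tak$ instead, noting $B(z,\tak)\subset B(y,3r)$, and express the density through $P(B(y,3r))$ versus $p(y)$. If that still leaves a gap, I will use the first part of Condition \ref{cond:A2} together with the definition of $v(y)$ as $\sup_r p(y)r^{d}v(y,r)/P(B(y,r))$ to absorb the ratio into $v(y)$. This is the step where I expect the bookkeeping, possibly requiring a different radius in the supremum, to be hardest.

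Once the small-ball bound holds with $v(y)^{K^*}$, I follow the integration over $\e$ of Lemma \ref{key} verbatim to get $\E[e_0^\top M(z)^{-1}e_0\mid\cdots]\le C_\X k^{-1}v(y)^{2DK^*}$. Finally I take expectations given $\hat\tau_{k+1}(y)$, with the indicator dropping out.
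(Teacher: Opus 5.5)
Your outline is the paper's argument. Its proof notes that, since $\tak\le\hat\tau_{k+1}(y)$ and $\norm{y-z}\le 2\hat\tau_{k+1}(y)$, the small-determinant set $z+\tak\mathcal C_i(\e)$ lies inside $y+\hat\tau_{k+1}(y)\mathcal A_i(\e)$ with $\mathcal A_i(\e)\subset B(0,3)^{K^*}$ and $\va{\mathcal A_i(\e)}\le\mathcal K\e^{1/D}$. It then applies the third part of Condition~\ref{cond:A2} at $y$ with radius $\hat\tau_{k+1}(y)$ and asserts $\Pb(\va{\det\Xi(V)}\le\e\mid\hat\tau_{k+1}(y))\le Cv(y)^{K^*}\e^{1/D}$.

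The step you single out is a genuine gap, and your remedies do not close it. Given $\tak$, a $z$-neighbour has law $P(\cdot\cap B(z,\tak))/P(B(z,\tak))$. Rescaling at $z$ by $\tak$, or comparing with $P(B(y,3\hat\tau_{k+1}(y)))$, changes only the numerator. Condition~\ref{cond:A2} bounds the denominator below only by $p(z)\tak^{d_P}$, and $p(z)/p(y)$ is uncontrolled. The first part of Condition~\ref{cond:A2} at $y$ and the definition of $v(y)$ involve only balls centred at $y$, so they cannot absorb this ratio. Worse, given $\hat\tau_{k+1}(y)$, the $z$-neighbours can all be among the $n-k-1$ points outside $B(y,\hat\tau_{k+1}(y))$. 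Those points are i.i.d.\ from $P(\cdot\mid\norm{X-y}>\hat\tau_{k+1}(y))$, and Lemma~\ref{cond_ind} only describes the conditional law on the disjoint-balls event. The paper's proof has the same hole: turning $(v(y,\hat\tau_{k+1}(y))p(y)\hat\tau_{k+1}(y)^{d_P})^{K^*}$ into $v(y)^{K^*}$ implicitly normalises by $P(B(y,\hat\tau_{k+1}(y)))^{K^*}$. That normalisation is legitimate for tuples of $y$-neighbours only.

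In fact no bookkeeping can fix this, because the inequality is false as stated. Take $d=1$, $L=1$ (so $K^*=2$, $D=1$, $k=6$), $r_0=0.1$, and $P=Q$ with density proportional to $\1{-1\le x\le 0}+x\,\1{0<x\le 1}$.
For $y\in[-0.062,-0.058]$, Condition~\ref{cond:A2} holds at $y$ with $p(y)$ the normalising constant and $v(y,\cdot)\equiv1$, hence $v(y)\le 1$.
Let $z\in[n^{-3/4},2n^{-3/4}]$ and condition on $\hat\tau_{k+1}(y)=s\in[0.043,0.055]$, a range of positive probability. Then $z\in B(y,2s)$ and $(-0.003,0]$ lies outside $B(y,s)$.
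With conditional probability tending to one, the points outside $B(y,s)$ put order $n$ points in $(-0.003,0]$ and none in $(0,10n^{-3/4}]$. So the $k+1$ nearest neighbours of $z$ are the sample points just left of $0$, all within $O(1/n)$ of each other, and $\tak=z+O(1/n)\le s$.
With $u_i:=X_i-z$, one has $e_0^\top M(z)^{-1}e_0=\sum_{i\in I_k(z)}u_i^2\big/\big(k\sum_{i\in I_k(z)}(u_i-\bar u)^2\big)$. This is of order $(nz)^2/k^3\ge n^{1/2}/k^3$, unbounded in $n$, on a set of $(y,z)$ of positive $Q^{\otimes 2}$-measure, whereas the lemma claims $C_\X/k$.

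The failure comes from atypically large values of $\hat\tau_{k+1}(y)$. The bounds on $E_{n,2}$ and $B_{n,2}$ would therefore need a weaker, unconditional substitute. The lemma as written cannot be proved, by your route or any other.
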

\begin{proof}
Keeping the same notation as above, we apply this time the last part of Condition \ref{cond:A2} to the Borel set $z+\tak\mathcal C_i(\e)$,
which, because $\tak\le\hat\tau_{k+1}(y)$ and $\norm{y-z}\le 2\hat\tau_{k+1}(y)$, is contained in some set $y+\hat\tau_{k+1}(y)
\mathcal A_i(\e)$, where $\mathcal A_i(\e)\subset B(0,3)^{K^*}$ also satisfies $\va{\mathcal A_i(\e)}\le\mathcal K\e^{1/D}$ for some 
constant $\mathcal K>0$. It yields $$\Pb\left(\va{\det(\Xi(V))}\le\e\mid\hat\tau_{k+1}(y)\right)\le Cv(y)^{K^*}\e^{1/D},$$ for some constant
$C>0$. The rest of the proof is unchanged.
\end{proof}

\end{document}